\newcommand{\nc}{\newcommand}
\nc{\les}{\lesssim}
\nc{\nit}{\noindent}
\nc{\nn}{\nonumber}
\nc{\D}{\partial}
\nc{\diff}[2]{\frac{d #1}{d #2}}
\nc{\diffn}[3]{\frac{d^{#3} #1}{d {#2}^{#3}}}
\nc{\pdiff}[2]{\frac{\partial #1}{\partial #2}}
\nc{\pdiffn}[3]{\frac{\partial^{#3} #1}{\partial{#2}^{#3}}}
\nc{\abs}[1] {\lvert #1 \rvert}
\nc{\cAc}{{\cal A}_c}
\nc{\cE}{{\cal E}}
\nc{\cF}{{\cal F}}
\nc{\cP}{{\cal P}}
\nc{\cV}{{\cal V}}
\nc{\cQ}{{\cal Q}}
\nc{\cGin}{{\cal G}_{\rm in}}
\nc{\cGout}{{\cal G}_{\rm out}}
\nc{\cO}{{\cal O}}
\nc{\Lav}{{\cal L}_{\rm av}}
\nc{\cL}{{\cal L}}
\nc{\cB}{{\cal B}}
\nc{\cZ}{{\cal Z}}
\nc{\cR}{{\cal R}}
\nc{\cT}{{\cal T}}
\nc{\cY}{{\cal Y}}
\nc{\cX}{{\cal X}}
\nc{\cXT}{{{\cal X}(T)}}
\nc{\cBT}{{{\cal B}(T)}}
\nc{\vD}{{\vec \mathcal{D}}}
\nc{\efield}{\mathcal{E}}
\nc{\mE}{\mathcal{E}}
\nc{\vE}{{\vec \efield}}
\nc{\vB}{{\vec \mathcal{B}}}
\nc{\vH}{{\vec \mathcal{H}}}
\nc{\F}{  \mathcal{F} }
\nc{\ty}{{\tilde y}}
\nc{\tu}{{\tilde u}}
\nc{\tV}{{\tilde V}}
\nc{\Pc}{{\bf P_c}}
\nc{\bx}{{\bf x}}
\nc{\bX}{{\bf X}}
\nc{\bXYZ}{{\bf XYZ}}
\nc{\bY}{{\bf Y}}
\nc{\bF}{{\bf F}}
\nc{\bS}{{\bf S}}
\nc{\dV}{{\delta V}}
\nc{\dE}{{\delta E}}
\nc{\TT}{{\Theta}}
\nc{\dPsi}{{\delta\Psi}}
\nc{\order}{{\cal O}}
\nc{\Rout}{R_{\rm out}}
\nc{\eplus}{e_+}
\nc{\eminus}{e_-}
\nc{\epm}{e_\pm}
\nc{\eps}{\varepsilon}
\nc{\vnabla}{{\vec\nabla}}
\nc{\G}{\Gamma}
\nc{\w}{\omega}
\nc{\mh}{h}
\nc{\mg}{g}
\nc{\vphi}{\varphi}
\nc{\tlambda}{\tilde\lambda}
\nc{\be}{\begin{equation}}
\nc{\ee}{\end{equation}}
\nc{\ba}{\begin{eqnarray}}
\nc{\ea}{\end{eqnarray}}
\nc{\g}{\gamma}
\nc{\ol}{\overline}
\newtheorem{theorem}{Theorem}[section]
\newtheorem{lemma}[theorem]{Lemma}
\newtheorem{prop}[theorem]{Proposition}
\newtheorem{corollary}[theorem]{Corollary}
\newtheorem{defin}[theorem]{Definition}
\newtheorem{rmk}[theorem]{Remark}
\nc{\pT}{\partial_T}
\nc{\pz}{\partial_z}
\nc{\pt}{\partial_t}
\nc{\la}{\langle}
\nc{\ra}{\rangle}
\nc{\infint}{\int_{-\infty}^{\infty}}
\nc{\halfwidth}{6.5cm}
\nc{\figwidth}{10cm}
\newcommand{\f}{\frac}
\nc{\nlayers}{L} \nc{\nsectors}{M}
\nc{\indicator}{\mathbf{1}}
\nc{\Rhole}{R_{\rm hole}}
\nc{\Rring}{R_{\rm ring}}
\nc{\neff}{n_{\rm eff}}
\nc{\Frem}{F_{\rm rem}}
\nc{\R}{\mathbb R}
\nc{\C}{\mathbb C}
\nc{\Z}{\mathbb Z}
\nc{\DD}{\Delta}
\nc{\cD}{\mathcal D}
\nc{\lnorm}{\left\|}
\nc{\rnorm}{\right\|}
\nc{\rnormp}{\right\|_{\ell^{p,\eps}}}
\nc{\rar}{\rightarrow}
\nc{\mR}{\mathcal R}
\nc{\oo}{\"o}   
\nc{\os}{\overset{o}}
\begin{document}

\begin{abstract}

	We study the fourth order Schr\"odinger operator $H=(-\Delta)^2+V$ for a short range potential in three space dimensions.  We provide a full classification of zero energy resonances and study the dynamic effect of each on the $L^1\to L^\infty$ dispersive bounds.  In all cases, we show that the natural $|t|^{-\frac34}$ decay rate may be attained, though for some resonances this requires subtracting off a finite rank term, which we construct and analyze.  The classification of these resonances, as well as their dynamical consequences differ from the Schr\"odinger operator $-\Delta+V$.

\end{abstract}

\title[Dispersive estimates for Fourth order Schr\"odinger]{On the Fourth order Schr\"odinger equation in three dimensions: dispersive estimates and zero energy resonances}

\author[Erdo\u{g}an, Green, Toprak]{M. Burak  Erdo\smash{\u{g}}an, William~R. Green, and Ebru Toprak}
\thanks{The  first  author  is  partially  supported  by  NSF  grant  DMS-1501041. The second author is supported by  Simons  Foundation  Grant  511825. }
 \address{Department of Mathematics \\
University of Illinois \\
Urbana, IL 61801, U.S.A.}
\email{berdogan@illinois.edu}
 \address{Department of Mathematics\\
Rose-Hulman Institute of Technology \\
Terre Haute, IN 47803, U.S.A.}
\email{green@rose-hulman.edu}
\address{Department of Mathematics \\
Rutgers University \\
Piscataway, NJ 08854, U.S.A.}
\email{et400@math.rutgers.edu}

\maketitle

\section{Introduction}

We consider the linear fourth order Schr\"odinger equation in three spatial dimensions
\begin{align*} 
 i \psi_t = H \psi, \,\,\, \psi(0,x)= f(x), \,\,\, H:=\Delta^2+ V, \, \, \, x\in \mathbb R^3.
\end{align*}
Variants of this equation were introduced by Karpman \cite{K} 
and Karpman and Shagalov \cite{KS} to account for   small fourth-order
dispersion in the propagation of  laser beams in a bulk medium with Kerr
nonlinearity, and may be used to model other ``high dispersion" models. Linear dispersive estimates
have recently been studied, \cite{fsy,GT4,FWY}, we continue this study to understand the structure
and effect of zero energy resonances on the dynamics of the solution operator in the three
dimensional case.

  Fourth order Schr\"odinger equations have been studied in various contexts. For example,  the stability and instability of solitary waves in a non-linear fourth order equation were considered in \cite{LS}. Well-posedness and scattering problems for various nonlinear fourth order equations have been studied by many authors, see for example  \cite{MXZ1, MXZ2,P, P1,CLB,CLB1}. We  note that time decay estimates  we consider in this paper may be used in the study of special solutions to non-linear equations. 
  
In the free case, see \cite{BKS},  the solution operator $e^{-it \Delta^2 }$ in $d$-dimensions preserves the $L^2$ norm and satisfies the following   dispersive estimate
\begin{align*}
\|e^{-it \Delta^2 } f \|_{L^{\infty}(\R^d)} \les |t|^{-\f{d} 4} \|f\|_{L^1(\R^d)}. 
\end{align*}
In this paper we study the dispersive estimates in three spatial dimensions  when there are obstructions at zero, i.e the distributional solutions to $H\psi =0$ with $\psi \in L^\infty(\mathbb R^3)$.  We provide a full classification of the zero energy obstructions as a finite dimensional space of eigenfunctions along with a ten-dimensional space of two distinct types of zero-energy resonances, see Section~\ref{sec:classification}.  As in the four dimensional case, \cite{GT4},  the zero energy obstructions in three dimensions have a more complicated structure than that of the Schr\"odinger operators $-\Delta+V$, \cite{JN,ES}.  Let $P_{ac}(H)$ be the projection onto the absolutely continuous spectrum of $H$ and $V(x)$ be a real-valued, polynomially decaying potential.   We  prove dispersive bounds of the form 
$$
\|e^{-it H }P_{ac}(H) f \|_{L^{\infty}} \les |t|^{-\gamma} \|f\|_{L^1},
$$
or a variant with spatial weights, for each type of zero energy obstruction where $\gamma$ depends on the type of resonance.   Such estimates can be used to study asymptotic stability of solitons for non-linear  equations. 

We introduce some notation to state our main results.  We let $\la \cdot \ra=(1+|\cdot|)^{\f12}$, and let $a-$ denote $a-\epsilon$ for a small, but fixed value of $\epsilon>0$.   We define the polynomially weighted $L^p$ spaces,
\begin{align*}
L^{p,\sigma}(\mathbb R^3):=\{ f\, : \, \la \cdot \ra^{\sigma} f\in L^p(\mathbb R^3) \} .
\end{align*} 

We provide a precise definition and characterization of resonances in Section~\ref{sec:classification} and Definition~\ref{def:restype} below. We characterize the resonances in terms of distributional solutions to $H\psi=0$.  Heuristically, if $|\psi(x)| \sim 1$ as $|x|\to \infty$, we have a resonance of the first kind.  If $|\psi(x)| \sim |x|^{-1}$ as $|x|\to \infty$ we have a resonance of the second kind, and if $|\psi(x)|\sim |x|^{-\f32-}$ we have a resonance of the third kind.  The classification of the resonances in the fourth order Schr\"odinger equation requires a more detailed, subtle analysis than in the Schr\"odinger equation since the lower order terms in the expansion of Birman-Schwinger operator interact each other, see expansions of $M(\lambda)$  in Lemma~\ref{lem:M_exp}. This causes complications in the classification of threshold obstructions which do not arise in the case of Schr\"odinger's equation or in the  four dimensional case, see \eqref{F def}, \eqref{T2 def}, and Section~\ref{sec:classification}.
Our main results are summarized in the theorem below.

\begin{theorem}\label{thm:main}
	
	Let $V$ be a real-valued potential satisfying $|V(x)|\les \la x\ra^{-\beta}$ be such that there are no embedded eigenvalues  in $[0,\infty)$ except possibly at zero.  Then,
	\begin{enumerate}[i)]
		
		\item If zero is regular, then if $\beta>5$,
		$$
			\| e^{-itH}P_{ac}(H)\|_{L^1\to L^\infty} \les |t|^{-\f34}.
		$$
		
		\item If there is a resonance of the first kind at zero, then if $\beta>7$,
		$$
		\| e^{-itH}P_{ac}(H)\|_{L^1\to L^\infty} \les |t|^{-\f34}.
		$$
		
		\item If there is a resonance of the second kind at zero, then if $\beta>11$,
		$$
		\| e^{-itH}P_{ac}(H)\|_{L^1\to L^\infty} \les \left\{ \begin{array}{ll} 
		|t|^{-\f34} & |t|\leq 1 \\
		|t|^{-\f14} & |t|>1
		\end{array}\right.
		$$
		Moreover, there is a time-dependent, finite-rank operator $F_t$ satisfying $\|F_t\|_{L^1\to L^\infty}\les \la t\ra^{-\f14}$ so that
		$$
			\| e^{-itH}P_{ac}(H)-F_t\|_{L^1\to L^\infty} \les |t|^{-\f34}.
		$$

		\item If there is a resonance of the third kind at zero, then if $\beta>15$,
		$$
		\| e^{-itH}P_{ac}(H)\|_{L^1\to L^\infty} \les \left\{ \begin{array}{ll} 
		|t|^{-\f34} & |t|\leq 1 \\
		|t|^{-\f14} & |t|>1
		\end{array}\right.
		$$
		Moreover, there is a time-dependent, finite-rank operator $G_t$ satisfying $\|G_t\|_{L^1\to L^\infty}\les \la t\ra^{-\f14}$ so that
		$$
		\| e^{-itH}P_{ac}(H)-G_t\|_{L^1\to L^\infty} \les |t|^{-\f12}.
		$$
		Furthermore, one can improve this time decay at the cost of spatial weights,
		$$
			\| e^{-itH}P_{ac}(H)-G_t\|_{L^{1,\f52}\to L^{\infty,-\f52}} \les |t|^{-\f34}.
		$$		
		
	\end{enumerate}
	
\end{theorem}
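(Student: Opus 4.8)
The plan is to follow the standard resolvent-expansion approach to dispersive estimates: write the evolution via the Stone formula
\[
e^{-itH}P_{ac}(H)f = \frac{1}{2\pi i}\int_0^\infty e^{-it\lambda^4}\,\lambda^3\,\big[R_V^+(\lambda^4)-R_V^-(\lambda^4)\big]f\,d\lambda,
\]
after the change of variables making $(-\Delta)^2+V-\lambda^4$ the relevant operator, and split the integral with a smooth cutoff into a high-energy piece ($\lambda\gtrsim\lambda_0$) and a low-energy piece ($\lambda\lesssim\lambda_0$). The high-energy part, and the low-energy part in the regular and first-kind cases, are expected to be handled exactly as in \cite{GT4,fsy} and their analysis would be quoted or sketched briefly; the genuinely new content is the low-energy analysis in the presence of second- and third-kind resonances, which is where the finite-rank corrections $F_t,G_t$ originate. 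The main tool is the symmetric resolvent identity: on the range of $V$, $R_V^\pm(\lambda^4) = R_0^\pm(\lambda^4) - R_0^\pm(\lambda^4)v\,M^\pm(\lambda)^{-1}v\,R_0^\pm(\lambda^4)$ where $v=|V|^{1/2}$, $M^\pm(\lambda)=U+vR_0^\pm(\lambda^4)v$, and $U=\mathrm{sgn}(V)$. Using the expansion of $M(\lambda)$ from Lemma~\ref{lem:M_exp} together with the classification of resonances in Section~\ref{sec:classification}, one inverts $M^\pm(\lambda)^{-1}$ near $\lambda=0$ by the Feshbach/Jensen–Kato inversion lemma, peeling off the most singular-in-$\lambda$ contributions coming from the projections onto the resonance subspaces.

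The key steps, in order, are: (1) record the free resolvent expansion $R_0^\pm(\lambda^4)(x,y)$ and its low-energy asymptotics in $\lambda$ (the kernel of $((-\Delta)^2-\lambda^4)^{-1}$ factors through $((-\Delta)-\lambda^2)^{-1}-((-\Delta)+\lambda^2)^{-1}$, giving both oscillatory and exponentially decaying pieces), noting each extra power of $\lambda$ costs a power of $\langle x\rangle\langle y\rangle$ in the error — this is what dictates the decay thresholds $\beta>11$ and $\beta>15$; (2) invert $M^\pm(\lambda)^{-1}$ via the classification, isolating the leading term $\lambda^{-a}(\text{rank-}k\ \text{operator}) + (\text{less singular})$ for the appropriate $a$; (3) substitute into the Stone formula, identify the leading finite-rank-in-space, most-singular-in-$\lambda$ term, and define $G_t$ (resp. $F_t$) to be precisely the operator obtained by integrating that leading term against $e^{-it\lambda^4}\lambda^3\chi(\lambda)\,d\lambda$; show by stationary phase / van der Corput on the $\lambda$-integral that this term is finite-rank with $\|G_t\|_{L^1\to L^\infty}\lesssim\langle t\rangle^{-1/4}$ (the $-1/4$ coming from $\int e^{-it\lambda^4}\lambda^3\lambda^{-a}\chi\,d\lambda$ with the relevant $a$); (4) bound the remainder $e^{-itH}P_{ac}-G_t$: for the $|t|^{-1/2}$ claim, show the next term in the expansion contributes $|t|^{-1/2}$ after an integration by parts in $\lambda$ that is licensed by $\beta>15$; (5) for the weighted estimate $L^{1,5/2}\to L^{\infty,-5/2}$, observe that allowing spatial weights $\langle x\rangle^{5/2}$ lets one absorb two extra powers of $\lambda$ near zero into the kernel, so one additional integration by parts in $\lambda$ becomes available, upgrading the remainder from $|t|^{-1/2}$ to $|t|^{-3/4}$ — the weight $5/2$ is exactly tuned to the borderline local-integrability/decay needed for that extra factor $\lambda^{2}$.

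The main obstacle is step (2)–(4) in the third-kind case: the operator $M(\lambda)^{-1}$ has a two-scale structure because, as the introduction stresses (see the discussion around \eqref{F def}, \eqref{T2 def} and Lemma~\ref{lem:M_exp}), the lower-order terms in the Birman–Schwinger expansion interact, so the resonance subspaces are not cleanly separated by a single power of $\lambda$ and one must iterate the Jensen–Kato inversion through several nested Feshbach reductions, tracking which pieces are genuinely $L^1\to L^\infty$ bounded with the claimed rate and which must be collected into $G_t$. Keeping the bookkeeping of $\langle x\rangle,\langle y\rangle$ weights consistent across these nested inversions — so that the claimed $\beta$-thresholds actually suffice and, in the last part, so that exactly the weight $5/2$ buys exactly one more power of $\lambda$ — is the delicate part; the oscillatory-integral estimates themselves (van der Corput with the phase $\lambda^4$, and the endpoint $\lambda$-integrals producing $|t|^{-1/4}$, $|t|^{-1/2}$, $|t|^{-3/4}$) are routine once the expansion is in hand. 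I would carry out the regular and first-kind cases first as a template, then the second-kind case to introduce $F_t$, and finally the third-kind case, reusing as much of the second-kind machinery as possible and only highlighting the extra singular term that forces the weaker $|t|^{-1/2}$ unweighted bound.
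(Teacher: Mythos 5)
Your overall architecture coincides with the paper's: Stone's formula \eqref{stone}, a low/high energy split with the high energy controlled by the limiting absorption principle of \cite{fsy}, the symmetric resolvent identity, inversion of $M^\pm(\lambda)$ by the Jensen--Nenciu/Feshbach scheme keyed to the resonance classification (Theorem~\ref{thm:Minvexp}), and extraction of the finite-rank terms $F_t,G_t$ from the most singular pieces. However, as written your steps (3)--(4) would fail without one further ingredient that is in fact the technical heart of the low-energy analysis: the orthogonality-driven gains of powers of $\lambda$ when a projection sits next to an outer free resolvent, i.e.\ the analog of Lemma~\ref{lem:QvR} ($Qv=0$ gives $\|QvR^\pm(H_0,\lambda^4)(\cdot,y)\|_{L^2}\les 1$ rather than $\lambda^{-1}$, and $S_2x_jv=S_2v=0$ gives $S_2v(R^\pm(H_0,\lambda^4)-G_0)=O(\lambda)$). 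Naive power counting from the $\lambda^{-1}$ singularity of the free resolvent, which is all your plan invokes, yields only $|t|^{-1/2}$ even in the regular case (the term $R vQ\Gamma_0QvR$ would be $O_1(\lambda^{-2})$), so the claimed $|t|^{-3/4}$ rates in (i)--(ii) and the remainder bounds in (iii)--(iv) are not reachable this way. Worse, in the third-kind case your prescription ``define $G_t$ by integrating the leading $\lambda^{-a}$ term'' with $a=4$ produces a divergent $\lambda$-integral: the term $G_0v\lambda^{-4}D_3vG_0$ gives $\int \lambda^{3}\cdot\lambda^{-4}\chi\,d\lambda$. The paper's proof needs the cancellation between $R^+_V$ and $R^-_V$ in Stone's formula precisely (and only) at this term, writing the contribution as $[R^+-R^-]v\lambda^{-4}D_3vR^+$ plus its mirror, and then using the $S_2$-cancellation bounds of Lemma~\ref{lem:QvR} to reduce it to an $O_1(\lambda^{-3})$ finite-rank piece (hence $\la t\ra^{-1/4}$) plus an $O_1(\lambda^{-2})$ remainder (hence $|t|^{-1/2}$). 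Your plan, which treats the $\pm$ resolvents on an equal footing and relies only on van der Corput for the $\lambda$-integral, does not see this and so does not produce a convergent definition of $G_t$.

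On the weighted estimate, your mechanism is in the right spirit but quantitatively off: the weight $\la x\ra^{5/2}\la y\ra^{5/2}$ does not buy ``two extra powers of $\lambda$'' nor an additional integration by parts. In the paper (Corollary~\ref{cor:wtd decay}) the gain comes from the vanishing moments $S_3vP_2(x)=0$ for quadratic polynomials, which permit expanding $[R^+-R^-]vS_3$ and $S_3v[R^+-G_0]$ to order $\lambda^{3/2}\la\cdot\ra^{5/2}$ on each side; the problematic term improves from $O_1(\lambda^{-2})$ to $O_1(\lambda^{-1}\la x\ra^{5/2}\la y\ra^{5/2})$ — a net gain of one power of $\lambda$ — and then the same single integration by parts of Lemma~\ref{lem:t-34bound} (now with $\alpha=1$ instead of $\alpha=2$) gives $|t|^{-3/4}$. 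So the missing ideas are concrete: the cancellation lemmas tied to the moment conditions defining $S_1,S_2,S_3$, and the targeted use of the $+/-$ cancellation against the $\lambda^{-4}$ block; without them the thresholds on $\beta$ and the stated decay rates cannot be recovered from the expansion alone.
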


As in the two-dimensional Schr\"odinger equation and four-dimensional fourth order equation, we have a `mild' type of resonance which does not affect the natural $|t|^{-\f{d}4}$ decay rate.   
As in \cite{fsy,GT4,FWY}, we assume absence of positive eigenvalues. Under this assumption, a limiting absorption principle for $H$ was established, see \cite[Theorem~2.23]{fsy}, which we use to control the large energy portion of the evolution, which necessitates the larger bound as $t\to 0$.  The large energy is unaffected by the zero energy obstructions, and our main contribution is to control the small energy portion of the evolution in all possible cases,  which we show is bounded for all time and decays for large $|t|$.  

In general, $|t|^{-\f{d}2}$ decay rate for the Schr\"odinger evolution is affected by zero energy obstructions. 
In particular, the time decay for large $|t|$ is slower if there are obstructions at zero, see for example \cite{JSS, Yaj, Sc2, goldE, eg2, EGG, GG1,GG2}. It is natural to expect zero energy resonances to effect the time decay of the fourth order operator as well.  This has been studied only in dimensions $d>3$;   by Feng, Wu and Yao, \cite{FWY}, when $d>4$ as an operator between weighted $L^2$ spaces, and by the second and third authors when $d=4$, \cite{GT4}.  These works built on the work of Feng, Soffer and Yao in \cite{fsy} which considered the case when zero is regular.  This work in turn had its roots in Jensen and Kato's work \cite{JenKat}, and \cite{JSS} for $-\Delta+V$.  
 
The free linear fourth order Schr\"odinger equation is studied by Ben-Artzi, Koch, and Saut \cite{BKS}. They present sharp estimates on the derivatives of the kernel of the free operator, (including $ \Delta^2 \pm \Delta $).  This followed work of Ben-Artzi and Nemirovsky which considered rather general operators of the form $f(-\Delta)+V$ on weighted $L^2$ spaces.  Further generalized Schr\"odinger operators of the form  $(-\Delta)^{m} + V$ were studied  in \cite{DDY}, \cite{soffernew}.   See also the work of Agmon \cite{agmon} and Murata \cite{Mur,Mur1,Mur2}.  In particular, Murata's results for operators of the form $P(D)+V$ do not hold for the fourth order operator due to the degeneracy of $P(D)= \Delta^2$ at zero.

There are not many works considering the perturbed linear fourth order Schr\"odinger equation outside of the previously referenced recent works. There has been study of  special solutions for nonlinear equations, see for example \cite{Lev,P,P1,MXZ1,MXZ2,Dinh}. See \cite{Lev1,LS} for a study of decay estimates for the fourth order wave equation.

Our results follow from careful expansions of the resolvent operators $(H-z)^{-1}$.  We develop these expansions as perturbations of the free resolvent, for which, by using the second resolvent identity (see also \cite{fsy}),  we have the following representation:
\begin{align}  \label{RH_0 rep}
R (H_0; z):=( \Delta^2 - z)^{-1} = \frac{1}{2z^{\f12}} \Big( R _0(z^{\f12}) - R_0 (-z^{\f12}) \Big), \quad z\in \mathbb C\setminus[0,\infty).
\end{align}
Here $H_0=(-\Delta)^2$ and the $R_0$ is the Schr\"odinger resolvent 
$R _0(z^{\f12}):=(-\Delta-z^{\f12})^{-1} $.
Since $H_0$ is essentially self-adjoint and $\sigma_{ac}(\Delta^2)= [0,\infty)$, by Weyl's criterion  $\sigma_{ess}(H) = [0,\infty)$ for a sufficiently decaying potential.  Let $\lambda \in \R^{+}$, we define the limiting resolvent operators by
\begin{align}
&\label{RH_0 def}R^{\pm}(H_0; \lambda ) := R^{\pm}(H_0; \lambda \pm i0 )= \lim_{\epsilon \to 0^+}(\Delta^2 - ( \lambda  \pm i\epsilon))^{-1}, \\
& \label{Rv_0 def} R_V^{\pm}(\lambda ) := R_V^{\pm}(\lambda  \pm i0 )= \lim_{\epsilon \to 0^+}(H - ( \lambda \pm i\epsilon))^{-1}.
\end{align}

Note that using the representation \eqref{RH_0 rep} for $R (H_0;z)$ in   definition \eqref{RH_0 def} with $z=w^4$ for $w$ in the first quandrant of the complex plane, and taking limits as $w\to \lambda$ and $w\to i\lambda$ in the first quadrant, we obtain 
\be\label{eq:4th resolvdef}
R^{\pm}(H_0; \lambda^4)= \frac{1}{2 \lambda^2} \Big( R^{\pm}_0(\lambda^2) - R_0(-\lambda^2) \Big),\,\,\,\lambda>0.
\ee 
Note that $R_0(-\lambda^2 ): L^2 \rightarrow L^2$ since $-\Delta$ has nonnegative spectrum. Further, by Agmon's limiting absorption principle, \cite{agmon}, $R^{\pm}_0(\lambda^2)$ is well-defined between weighted $L^2$ spaces. Therefore, $R^{\pm}(H_0; \lambda^4)$ is also well-defined between these weighted spaces. This property is extended to $R_V^{\pm}(\lambda )$ in \cite{fsy}.

As usual, we use functional calculus and the Stone's formula to write 
\begin{align}
\label{stone}
 \ e^{-itH}    P_{ac}(H) f(x) = \frac1{2\pi i}  \int_0^{\infty} e^{-it\lambda}   [R_V^+(\lambda)-R_V^{-}(\lambda)]  f(x) d\lambda.
 \end{align}
Here the difference of the perturbed resolvents provides the spectral measure.   Our analysis in the three-dimensional case differs from the four dimensional case and previous works on the Schr\"odinger operator in several ways.  First, the  behavior of the free resolvents  in \eqref{eq:4th resolvdef} provides technical challenges in which various lower order terms in the expansions interact.  These interactions complicate the inversion process as the operators whose kernels we study and need to invert are now the difference of different operators in the resolvent expansions, see \eqref{F def} and \eqref{T2 def} below.  Such difficulties are new to this case, in the analysis of the Schr\"odinger resolvents, see \cite{JN}, on can iterate the expansion procedure by examining the kernel of a single operator at each step.  The techniques  developed here may also be of use in dimensions $d=1,2$ or other high dispersion equations.    Furthermore, the difference between the `+' and `-' resolvents in the Stone's formula, \eqref{stone}, which is crucial in the Schr\"odinger operators and the four-dimensional case, do not improve the analysis except in the most singular term in the case of a resonance of the third kind.  Further, the classification of resonances differs from the Schr\"odinger case in several key aspects as shown in Section~\ref{sec:classification} below.

The paper is organized as follows.  In Section~\ref{sec:notation} we provide definitions of the various notations we use to develop the operator expansions.  In Section~\ref{sec:free} we develop expansions for the free resolvent and establish the natural dispersive bound for the free operator.  In Section~\ref{sec:low energy} we develop expansions for the perturbed resolvent in a neighborhood of the threshold for each type of resonance that may occur.  In Section~\ref{sec:low disp} we utilize these expansions to prove the low energy version of Theorem~\ref{thm:main}.  In Section~\ref{sec:large} we prove the high energy version of Theorem~\ref{thm:main}.  Finally, in Section~\ref{sec:classification} we provide a classification of the spectral subspaces associated to the different types of zero-energy obstructions.

\section{Notation}\label{sec:notation}
For the convenience of the reader, we have gathered the notation and terminology we use throughout the paper.  

For an operator $\mE(\lambda) $, we write $\mE(\lambda)=O_1(\lambda^{-\alpha})$ if it's kernel $\mE(\lambda)(x,y)$   has the property
\be\label{O1lambda} \sup_{x,y\in\R^3,  \lambda>0}\big[\lambda^{\alpha}|\mE(\lambda)(x,y)|+\lambda^{\alpha+1} |\partial_\lambda\mE(\lambda)(x,y)|\big]<\infty.
\ee 
Similarly, we use the notation $\mE(\lambda)=O_1(\lambda^{-\alpha}g(x,y))$ if $\mE(\lambda)(x,y)$   satisfies
\be\label{O1lambdag}   |\mE(\lambda)(x,y)|+\lambda  |\partial_\lambda\mE(\lambda)(x,y)|\les \lambda^{-\alpha}g(x,y).
\ee
Recall the definition of the Hilbert-Schmidt
norm of an operator $K$ with kernel $K(x,y)$,
$$
	\| K\|_{HS}:=\bigg(\iint_{\R^{6}}
	|K(x,y)|^2\, dx\, dy
	\bigg)^{\f12}.
$$
We also recall the following terminology from \cite{Sc2,eg2}:
\begin{defin}
	We say an operator $T:L^2(\R^2) \to   L^2(\R^2)$ with kernel
	$T(\cdot,\cdot)$ is absolutely bounded if the operator with kernel
	$|T(\cdot,\cdot)|$ is bounded from $  L^2(\R^2)$ to $ L^2(\R^2)$. 	
\end{defin}
We note that Hilbert-Schmidt and finite-rank operators are absolutely bounded operators.

We will use the letter $\Gamma$ to denote a generic absolutely bounded operator. In addition,   $\Gamma_\theta$  denotes a $\lambda$ dependent absolutely bounded operator  satisfying 
\be\label{eq:Gamma def}
\big \||\Gamma_\theta|\big \|_{L^2\to L^2}+ \lambda \big \||\partial_\lambda \Gamma_\theta|\big \|_{L^2\to L^2} \les \lambda^{\theta},\quad \lambda>0.
\ee
The operator may vary depending on each occurrence and $\pm $ signs.  The use of this notation allows us to significantly streamline the resolvent expansions developed in Section~\ref{sec:low energy} as well as the proofs of the dispersive bounds in Section~\ref{sec:low disp}.

We use the smooth, even low energy cut-off $\chi$ defined by $\chi(\lambda)=1$ if $|\lambda|<\lambda_0\ll 1$ and $\chi(\lambda)=0$ when $|\lambda|>2\lambda_0$ for some sufficiently small constant $0<\lambda_0\ll1$.  In analyzing the high energy we utilize the complementary cut-off $\widetilde \chi(\lambda)=1-\chi(\lambda)$.

\section{The Free  Evolution}\label{sec:free}
 
In this section we obtain expansions for the free fourth order Schr\"odinger resolvent operators $R^{\pm}(H_0; \lambda^4)$, using the identity \eqref{RH_0 rep}  and the Bessel function representation of the Schr\"odinger free resolvents $R^{\pm}_0(\lambda^2)$. We use these expansions to establish  dispersive estimates for the free fourth order Schr\"odinger evolution, and throughout the remainder of the paper to study the spectral measure for the perturbed operator. 

Recall the expression of the free Schr\"odinger resolvents in dimension three, (see \cite{GS} for example) 
$$
R^{\pm}_0(\lambda^2) (x,y)= \frac{e^{\pm i \lambda|x-y|}}{4 \pi |x-y|} .
$$
Therefore, by  \eqref{eq:4th resolvdef}, 
\be\label{eq:R0lambda}
R^\pm (H_0, \lambda^4) (x,y)= \frac{1}{2 \lambda^2}  \Bigg( \frac{e^{\pm i \lambda|x-y|}}{4 \pi |x-y|}  -  \frac{e^{-\lambda|x-y|}}{4 \pi |x-y|}  \Bigg).
\ee
When, $\lambda |x-y|<1$, we have the following representation for the $R(H_0, \lambda^4)$
\begin{align} \label{eq:R0low}
R^\pm(H_0, \lambda^4) (x,y) = \frac{a^{\pm}}{\lambda}  + G_0  + a_1^{\pm}  \lambda G_1 + a_3^{\pm} \lambda^3 G_3+ \lambda^4 G_4 + O(\lambda^5 |x-y|^6) .
\end{align} 
Here 
\begin{align}\label{adef}
&a^{\pm}:= \frac{1\pm i}{8 \pi}, \quad a_1^{\pm}= \frac{1\mp i }{ 8 \pi \cdot (3!)}, \quad a_3^{\pm} =\frac {1\pm i }{8 \pi \cdot (5!)}, \quad G_0 (x,y) = - \frac{|x-y|}{8 \pi}, \\\label{Gdef}
& G_1 (x,y) = |x-y|^2, \quad G_3 (x,y) = |x-y|^4, \quad G_4 (x,y) = -\frac{ |x-y|^5}{4\pi \cdot (6!)} .
\end{align}
When $\lambda |x-y|\gtrsim 1$, the expansion remains valid.
Notice that   $G_0=(\Delta^2)^{-1}$.

The following lemma will be used repeatedly to obtain low energy dispersive estimates. 
\begin{lemma}\label{lem:t-34bound} Fix $0<\alpha<4$. Assume that $\mE(\lambda)=O_1(\lambda^{-\alpha})$ for $0<\lambda\les 1$, 
then we have the bound
	\be\label{eq:t-34bound}
		 \bigg|
		\int_{0}^\infty e^{it\lambda^4}  \chi(\lambda) \lambda^3  \mE(\lambda)\, d\lambda
		\bigg| \les \la t\ra^{-1+\f\alpha4}.
	\ee
\end{lemma}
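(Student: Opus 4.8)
The plan is to estimate the oscillatory integral by splitting into the regimes $|t| \lesssim 1$ and $|t| \gtrsim 1$, and to exploit the stationary-phase/van der Corput structure of the phase $\lambda^4$ together with the size and derivative control on $\mE(\lambda)$ encoded in the $O_1(\lambda^{-\alpha})$ hypothesis. For $|t|\les 1$ the claimed bound $\la t\ra^{-1+\alpha/4}$ is comparable to a constant, so it suffices to bound the integral by an absolute constant: since $\chi$ restricts to $0<\lambda\les 1$ and $\alpha<4$, the integrand is bounded by $\lambda^3\cdot\lambda^{-\alpha}=\lambda^{3-\alpha}$ which is integrable near $0$, giving a uniform bound. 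So the content is entirely in the regime $|t|\gtrsim 1$.

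For $|t|\gtrsim 1$, first I would dispose of the piece where $\lambda$ is very small, say $\lambda \le |t|^{-1/4}$. There the phase does not oscillate effectively and one simply estimates in absolute value: $\int_0^{|t|^{-1/4}} \lambda^3 \lambda^{-\alpha}\,d\lambda \les |t|^{-(4-\alpha)/4} = |t|^{-1+\alpha/4}$, using $\alpha<4$ so the integral converges at $0$. For the remaining range $|t|^{-1/4}\le \lambda \les 1$ I would integrate by parts once, writing $e^{it\lambda^4} = \frac{1}{4it\lambda^3}\,\partial_\lambda\big(e^{it\lambda^4}\big)$, so that
\begin{align*}
\int_{|t|^{-1/4}}^\infty e^{it\lambda^4}\chi(\lambda)\lambda^3\mE(\lambda)\,d\lambda = -\frac{1}{4it}\int_{|t|^{-1/4}}^\infty e^{it\lambda^4}\,\partial_\lambda\big(\chi(\lambda)\mE(\lambda)\big)\,d\lambda + \text{(boundary term)}.
\end{align*}
The factor $\lambda^3$ is exactly cancelled by the $\lambda^{-3}$ from the integration by parts, which is the reason the lemma is stated with this particular power. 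The boundary term at $\lambda=|t|^{-1/4}$ contributes $|t|^{-1}\cdot |\mE(|t|^{-1/4})| \les |t|^{-1}\cdot |t|^{\alpha/4} = |t|^{-1+\alpha/4}$, and the boundary term at the upper endpoint vanishes since $\chi$ is compactly supported. In the remaining integral, $\partial_\lambda(\chi\mE)$ is controlled by $|\chi'\mE| + |\chi\,\partial_\lambda\mE| \les \lambda^{-\alpha} + \lambda^{-\alpha-1} \les \lambda^{-\alpha-1}$ on the support, using the two bounds in the definition \eqref{O1lambda} of $O_1(\lambda^{-\alpha})$ (and $\lambda\les1$ to absorb $\chi'$). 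Hence this term is bounded by $|t|^{-1}\int_{|t|^{-1/4}}^{\,\les 1}\lambda^{-\alpha-1}\,d\lambda$. Since $\alpha>0$, this integral diverges as its lower limit $\to 0$, and evaluates to $\les |t|^{-1}\cdot (|t|^{-1/4})^{-\alpha} = |t|^{-1+\alpha/4}$. Combining the three contributions gives \eqref{eq:t-34bound} for $|t|\gtrsim1$.

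The main (minor) obstacle is bookkeeping the endpoint $\lambda = |t|^{-1/4}$ consistently: one must choose the cutoff between the ``non-oscillatory'' and ``integrate-by-parts'' regimes to be exactly $|t|^{-1/4}$ so that both the direct estimate on the low piece and the boundary term plus the remaining integral all produce the same power $|t|^{-1+\alpha/4}$; the constraint $0<\alpha<4$ is used on both sides (convergence at $0$ of $\lambda^{3-\alpha}$ for the low piece, and the divergence of $\lambda^{-\alpha-1}$ producing the correct power for the high piece). No genuine stationary point appears in the integrand since $\lambda$ is bounded away from where $\partial_\lambda(\lambda^4)=0$ only at $\lambda=0$, which is precisely the point we handle separately without oscillation; thus a single integration by parts suffices and no van der Corput lemma for higher-order stationary phase is needed.
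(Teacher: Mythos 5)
Your proposal is correct and follows essentially the same argument as the paper: a direct estimate on $\lambda\le |t|^{-1/4}$ giving $|t|^{-1+\alpha/4}$, and a single integration by parts via $e^{it\lambda^4}\lambda^3=\partial_\lambda e^{it\lambda^4}/(4it)$ on the remaining range, with the boundary term and the $\lambda^{-\alpha-1}$ integral each contributing the same power. The only (harmless) difference is that you track the $\chi'$ contribution explicitly, which the paper absorbs without comment.
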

\begin{proof}
  By the support condition  and since $\alpha<4$,  the integral is bounded.  Now, for $|t|>1$ we rewrite the integral in \eqref{eq:t-34bound} as
	$$
	 \int_{0}^{t^{-\f14}} e^{it\lambda^4} \lambda^3  \chi(\lambda)\mathcal E(\lambda)\, d\lambda+\int_{t^{-\f14}}^\infty e^{it\lambda^4} \lambda^3 \chi(\lambda)\mathcal E(\lambda)\, d\lambda:=I+II.
	$$
	We see that
	$$
		|I|\leq \int_0^{t^{-\f14}} \lambda^{3-\alpha}\, d\lambda \les t^{-1+\f\alpha4 }.
	$$
	For the second term, we use $\partial_\lambda e^{it\lambda^4}/(4it)=e^{it\lambda^4} \lambda^3$ to  integrate by parts once.
	$$
		|II|\les \frac{e^{it\lambda^4} \mathcal E(\lambda)}{4it}\bigg|_{t^{-\f14}} + \frac{1}{t} \int_{t^{-\f14}}^\infty|\mathcal E'(\lambda)|\, d\lambda\les t^{-1+\f\alpha4}+ \frac{1}{t}\int_{t^{-\f14}}^\infty \lambda^{-\alpha-1}\, d\lambda\les t^{-1+\f\alpha4}.
	$$ 
\end{proof}

\begin{lemma}\label{lem:free bound}
	
	We have the bound
	$$
		\sup_{x,y\in \mathbb R^3} \bigg|
		\int_{0}^\infty e^{it\lambda^4} \chi(\lambda) \lambda^3  R^\pm(H_0,\lambda^4)(x,y)\, d\lambda
		\bigg| \les \la t\ra^{-\f34}.
	$$
	
\end{lemma}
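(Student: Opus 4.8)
The plan is to reduce everything to Lemma~\ref{lem:t-34bound} with $\alpha=1$. Concretely, it suffices to show that the free fourth order resolvent, regarded as an operator with kernel $R^\pm(H_0,\lambda^4)(x,y)$, satisfies $R^\pm(H_0,\lambda^4)=O_1(\lambda^{-1})$ for $0<\lambda\les1$, i.e.
\be
\sup_{x,y\in\R^3,\ \lambda>0}\Big[\lambda\,|R^\pm(H_0,\lambda^4)(x,y)|+\lambda^2\,|\partial_\lambda R^\pm(H_0,\lambda^4)(x,y)|\Big]<\infty .
\ee
Once this is in hand, Lemma~\ref{lem:t-34bound} (with $\alpha=1\in(0,4)$) gives the stated bound $\les\la t\ra^{-1+\f14}=\la t\ra^{-\f34}$, and it is uniform in $x,y$ precisely because the $O_1$ estimate is.

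To check the $O_1(\lambda^{-1})$ bound I would not use the Taylor expansion \eqref{eq:R0low} (whose coefficients $G_0\sim|x-y|$, $G_1\sim|x-y|^2$, $\dots$ grow in $|x-y|$), but rather the exact formula \eqref{eq:R0lambda}: with $r=|x-y|$,
\be
R^\pm(H_0,\lambda^4)(x,y)=\frac{1}{8\pi\lambda^2}\cdot\frac{e^{\pm i\lambda r}-e^{-\lambda r}}{r}.
\ee
The elementary inequalities $|e^{i\theta}-1|\le|\theta|$ and $0\le 1-e^{-s}\le s$ for $s\ge0$ give $\big|\frac{e^{\pm i\lambda r}-e^{-\lambda r}}{r}\big|\le\frac{|e^{\pm i\lambda r}-1|+|1-e^{-\lambda r}|}{r}\le 2\lambda$, hence $|R^\pm(H_0,\lambda^4)(x,y)|\le(4\pi\lambda)^{-1}$. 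Differentiating in $\lambda$ produces $\frac{1}{8\pi}\big[\lambda^{-2}(\pm i\,e^{\pm i\lambda r}+e^{-\lambda r})-\tfrac{2}{\lambda^3}\cdot\tfrac{e^{\pm i\lambda r}-e^{-\lambda r}}{r}\big]$; the first term is $\les\lambda^{-2}$ trivially and the second is $\les\lambda^{-2}$ by the previous bound. This is exactly $R^\pm(H_0,\lambda^4)=O_1(\lambda^{-1})$.

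The one point worth flagging is why the bound survives the $r^{-1}$ singularity of $R_0$: it is the cancellation at $r=0$ between the oscillatory exponential $e^{\pm i\lambda r}$ and the decaying exponential $e^{-\lambda r}$ coming from $R_0(-\lambda^2)$ in \eqref{eq:4th resolvdef}, whose difference vanishes to first order in $\lambda r$; this simultaneously removes the $r^{-1}$ and prevents any growth in $r$, since $|e^{\pm i\lambda r}-e^{-\lambda r}|\les\min(\lambda r,1)$. Consequently no case analysis on the size of $\lambda r$ is needed and there is no genuine obstacle here — this lemma is the base case on which the low-energy estimates are built, and the remaining work is purely bookkeeping.
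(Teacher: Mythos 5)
Your proof is correct and follows essentially the same route as the paper: establish $R^\pm(H_0,\lambda^4)=O_1(\lambda^{-1})$ directly from the exact kernel formula \eqref{eq:R0lambda}, using the cancellation between $e^{\pm i\lambda r}$ and $e^{-\lambda r}$ at $r=0$, and then invoke Lemma~\ref{lem:t-34bound} with $\alpha=1$. The only cosmetic difference is that the paper splits into the regimes $\lambda|x-y|\gtrless 1$ (using the mean value theorem in the small regime), whereas you obtain the same bound uniformly from $|e^{\pm i\lambda r}-e^{-\lambda r}|\le 2\lambda r$, which is the identical cancellation phrased without a case distinction.
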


\begin{proof}
	Note that the cancellation between $R^+$ and $R^-$ is not needed for, nor does it improve this bound. Using \eqref{eq:R0lambda} we have
	$$
|R^\pm (H_0, \lambda^4) (x,y)|=    \Bigg| \frac{e^{\pm i \lambda|x-y|}-e^{-\lambda|x-y|}}{8 \pi \lambda^2|x-y|}  \Bigg|\les \frac1{\lambda}
	$$
	uniformly in $x,y$ for $\lambda|x-y|>1$. For $\lambda|x-y|<1$, we have 
	$$
|R^\pm (H_0, \lambda^4) (x,y)|= \Bigg| \frac{e^{\pm i \lambda|x-y|}-1+1-e^{-\lambda|x-y|}}{8 \pi \lambda^2|x-y|}  \Bigg| \les \frac1{\lambda}
	$$
	by the mean value theorem. Similarly,
	$$|\partial_\lambda R^\pm (H_0, \lambda^4) (x,y)|\les \frac1{\lambda^2}
$$
uniformly in $x,y$. Therefore  
\be\label{eq:freeO1}
R^\pm (H_0, \lambda^4)=O_1(\lambda^{-1}),
\ee 
and the claim follows from Lemma~\ref{lem:t-34bound} with $\alpha=1$.	
\end{proof}
\begin{rmk} \label{rmk:large}  The   $t^{-\f34}$ bound is valid if we insert the high energy cutoff $\widetilde{\chi}(\lambda)=1-\chi(\lambda)$  in place of  the low energy cutoff $\chi(\lambda)$ in Lemma~\ref{lem:t-34bound}. However, the integral is not absolutely convergent, and is large for small $|t|$. That is,
$$
		 \bigg|
		\int_{0}^\infty e^{it\lambda^4}  \widetilde{\chi}(\lambda) \lambda^3  \mE(\lambda) \, d\lambda
		\bigg| \les | t |^{-1+\f\alpha4}.
$$
Consequently, we obtain the following estimate for the the free equation  
$$
\| e^{i t \Delta^2} f\|_{ L^{1} \rightarrow L^{\infty}} \les t^{-\f34}. 
$$ 
\end{rmk}
\section{Resolvent expansions near zero} \label{sec:low energy} 
In this section we provide the careful asymptotic expansions of the perturbed resolvent in a neighborhood of the threshold.  
To understand \eqref{stone} for small energies, i.e. $ \lambda \ll 1$, we use the symmetric resolvent identity.  We define $U(x)=$sign$(V(x))$, $v(x)=|V(x)|^{\f12}$, and write 
\begin{align} \label{resid}
R^{\pm}_V(\lambda^4)= R^{\pm}(H_0, \lambda^4) - R^{\pm}(H_0, \lambda^4)v (M^{\pm} (\lambda))^{-1} vR^{\pm}(H_0, \lambda^4) ,
\end{align}
where $M^{\pm}(\lambda) = U + v R^{\pm}(H_0, \lambda^4) v $. As a result, we need to obtain expansions for $(M^{\pm} (\lambda))^{-1}$.  The behavior of these operators as $\lambda \to 0$ depends on the type of resonances at zero energy, see Definition~\ref{def:restype} below. We determine these expansions case by case and establish their contribution to spectral measure in Stone's formula, \eqref{stone}.

Let $T:= U + v  G_0 v$, and recall \eqref{eq:Gamma def}, we have the following expansions. 
\begin{lemma}\label{lem:M_exp} For  $0<\lambda<1$ define  $M^{\pm}(\lambda) = U + v R^{\pm}(H_0, \lambda^4) v $. Let $P=v\langle \cdot, v\rangle \|V\|_1^{-1}$ denote the orthogonal projection onto the span of $v$.  We have
\begin{align}  
 \label{Mexp} M^{\pm}(\lambda)&= A^{\pm}(\lambda) +  M_0^\pm (\lambda),\\ 
\label{Apm}
A^{\pm}(\lambda)&= \frac{\|V\|_1 a^\pm}{\lambda} P+T,
\end{align} 
 where $T:=U+vG_0v$ and
 $M_0^\pm (\lambda)=\Gamma_\ell$, for any $0\leq \ell \leq  1$,  provided that $v(x)\les \la x \ra^{-\frac{5}{2}-\ell-}$.
Moreover, for each $N=1,2,...,$ and $\ell\in [0,1]$,
\be\label{M_0}
 M_0^\pm(\lambda)=\sum_{k=1}^N \lambda^k M_k^\pm +\Gamma_{N+\ell}
\ee
 provided that $v(x)\les \la x \ra^{-\frac{5}{2}-N-\ell-}$. Here the operators $M_k^\pm$ and the error term are  Hilbert-Schmidt, and hence absolutely bounded operators. In particular 
\begin{align}
  M^{\pm}_1 = a_1^{\pm} vG_1v,  \,\,\,\,\,\, M_2^\pm=0, \, \,\,\,\,\, M^{\pm}_3 =  a_3^{\pm}  vG_3v,  \,\,\,\,\,\,  M^{\pm}_4 =    vG_4 v,   \label{M134} 
\end{align} 
where, the $a^\pm_j$'s and $G_j$'s are defined in \eqref{adef} and \eqref{Gdef}.
\end{lemma}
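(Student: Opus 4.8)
The plan is to reduce everything to the expansion of the free resolvent $R^{\pm}(H_0,\lambda^4)$ around $\lambda=0$ and then sandwich it between the weights $v$. Starting from \eqref{eq:R0low}, conjugating by $v$ gives
\begin{align*}
vR^{\pm}(H_0,\lambda^4)v = \frac{a^{\pm}}{\lambda}\,v\langle\cdot,v\rangle + vG_0v + a_1^{\pm}\lambda\, vG_1v + a_3^{\pm}\lambda^3\, vG_3v + \lambda^4\, vG_4v + \text{(error)}.
\end{align*}
Recognizing $v\langle\cdot,v\rangle = \|V\|_1 P$ and $U+vG_0v=T$, we read off \eqref{Mexp}, \eqref{Apm}, and the identifications \eqref{M134}, including $M_2^{\pm}=0$ since there is no $\lambda^2$ term in \eqref{eq:R0low}. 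So the structural content is immediate; the real work is the error analysis, i.e. showing the remainder after truncating at order $\lambda^N$ is $\Gamma_{N+\ell}$, that is, an absolutely bounded (in fact Hilbert-Schmidt) operator whose $L^2\to L^2$ norm, together with $\lambda$ times the norm of its $\lambda$-derivative, is $O(\lambda^{N+\ell})$, under the decay hypothesis $v(x)\lesssim\langle x\rangle^{-5/2-N-\ell-}$.

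First I would establish pointwise kernel bounds on the remainder $\mathcal{E}_N^{\pm}(\lambda)(x,y)$ in the free resolvent expansion. The delicate point is that \eqref{eq:R0low} is a small-argument ($\lambda|x-y|<1$) expansion, whereas we need bounds valid for all $x,y$ with $\lambda<1$. From \eqref{eq:R0lambda}, writing $R^{\pm}(H_0,\lambda^4)(x,y) = \frac{e^{\pm i\lambda|x-y|} - e^{-\lambda|x-y|}}{8\pi\lambda^2|x-y|}$, I would Taylor-expand both exponentials. In the regime $\lambda|x-y|\lesssim 1$ the Taylor remainder gives $|\mathcal{E}_N^{\pm}(\lambda)(x,y)|\lesssim \lambda^{N+\ell}|x-y|^{N+1+\ell}$ (interpolating to a non-integer power $\ell$ by treating $|x-y|^{N+\ell+1}$ as a fractional bound between consecutive integer remainders — this is exactly where the "$-$" and the fractional weights enter). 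In the regime $\lambda|x-y|\gtrsim 1$ one uses instead the crude bound $|R^{\pm}(H_0,\lambda^4)(x,y)|\lesssim\lambda^{-1}$ from Lemma~\ref{lem:free bound} together with $\lambda^{-1}\lesssim \lambda^{N+\ell}|x-y|^{N+\ell+1}$, valid since $\lambda|x-y|\gtrsim 1$. Both regimes thus yield $|\mathcal{E}_N^{\pm}(\lambda)(x,y)|\lesssim \lambda^{N+\ell}|x-y|^{N+\ell+1}$, and a parallel computation using $\partial_\lambda$ of \eqref{eq:R0lambda} gives $|\partial_\lambda \mathcal{E}_N^{\pm}(\lambda)(x,y)|\lesssim \lambda^{N+\ell-1}|x-y|^{N+\ell+1}$, matching the $\Gamma_\theta$ definition \eqref{eq:Gamma def}.

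Then the Hilbert-Schmidt bound is a direct integral estimate: the kernel of $v\mathcal{E}_N^{\pm}(\lambda)v$ is bounded by $\lambda^{N+\ell}\,v(x)|x-y|^{N+\ell+1}v(y)$, and
\begin{align*}
\iint_{\R^6} v(x)^2|x-y|^{2N+2\ell+2}v(y)^2\,dx\,dy \lesssim \left(\int_{\R^3} \langle x\rangle^{2N+2\ell+2} v(x)^2\,dx\right)^2 < \infty
\end{align*}
precisely because $v(x)\lesssim\langle x\rangle^{-5/2-N-\ell-}$ makes the integrand decay like $\langle x\rangle^{-3-}$; the cross term $|x-y|^{2N+2\ell+2}\lesssim \langle x\rangle^{2N+2\ell+2}\langle y\rangle^{2N+2\ell+2}$ splits the double integral. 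The same holds for $\partial_\lambda$, giving $\||\,\Gamma_{N+\ell}|\|_{L^2\to L^2} + \lambda\||\,\partial_\lambda\Gamma_{N+\ell}|\|_{L^2\to L^2}\lesssim\lambda^{N+\ell}$. The intermediate operators $M_k^{\pm}$ have kernels $c\,v(x)|x-y|^{k+1}v(y)$ (up to constants), Hilbert-Schmidt by the same computation with $N$ replaced by $k$, hence absolutely bounded; finite-rank/HS operators being absolutely bounded was already noted after the definition. The case $N=0$ with the claim $M_0^{\pm}(\lambda)=\Gamma_\ell$ is the $N=0$ instance of the same argument (the $\lambda^{-1}P$ and $T$ terms having been peeled off into $A^{\pm}(\lambda)$). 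The main obstacle is bookkeeping the fractional weight $\ell$ consistently across the two regimes $\lambda|x-y|\lessgtr 1$ and verifying the $\partial_\lambda$ bounds survive the extra factor of $\lambda^{-1}$; everything else is routine.
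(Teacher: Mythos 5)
Your proposal is correct and follows essentially the same route as the paper: split into the regimes $\lambda|x-y|<1$ and $\lambda|x-y|>1$, obtain pointwise remainder bounds of the form $\lambda^{N+\ell}|x-y|^{N+\ell+1}$ (with the fractional power $\ell$ from interpolating the Taylor remainder, and with the large-argument regime handled by $\lambda^{-1}\les \lambda^{N+\ell}|x-y|^{N+\ell+1}$ — where, as in the paper, one should also note the subtracted polynomial terms obey the same bound there), and then conclude Hilbert--Schmidt bounds from the decay of $v$. The paper writes this out only for $N=1,2$ and notes the other cases are similar, which is exactly the general-$N$ bookkeeping you carry out.
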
 	
\begin{proof} We  give a proof only for the case $N=1,2$, the other cases are similar.
Using the expansion \eqref{eq:R0low} for $\lambda|x-y|<1$ and \eqref{eq:R0lambda} for $\lambda|x-y|>1$, we have
$$
R^\pm (H_0, \lambda^4) (x,y)= \frac{a^{\pm}}{\lambda}  + G_0  + a_1^{\pm}  \lambda G_1+O_1(\lambda^3 |x-y|^4),\,\,\,\,\lambda|x-y|<1,
$$
\begin{multline*}
R^\pm (H_0, \lambda^4) (x,y)=\frac{a^{\pm}}{\lambda}  + G_0  + a_1^{\pm}  \lambda G_1+ \Big[\frac{e^{\pm i \lambda|x-y|}  -  e^{-\lambda|x-y|} }{8\pi \lambda^2|x-y|}   -
\frac{a^{\pm}}{\lambda}  - G_0  - a_1^{\pm}  \lambda G_1\Big]\\
= \frac{a^{\pm}}{\lambda}  + G_0  + a_1^{\pm}  \lambda G_1+  O_1(\lambda|x-y|^2),\,\,\,\,\lambda|x-y|>1.
\end{multline*} 
Using these in the definition of $M^{\pm}(\lambda)$ and $M^{\pm}_0(\lambda)$, we have 
$$
\Big|\big(M^{\pm}_0(\lambda)- a_1^{\pm} \lambda vG_1v\big)(x,y)\Big|\les v(x)v(y)|x-y|^{\ell+2}\lambda^{\ell+1},\,\,\,0\leq \ell\leq 2,
$$
$$
\Big|\partial_\lambda\big(M^{\pm}_0(\lambda)- a_1^{\pm} \lambda vG_1v\big)(x,y)\Big|\les v(x)v(y)|x-y|^{\ell+2}\lambda^{\ell },\,\,\,0\leq \ell\leq 2.
$$
This yields the claim for $N=1$ since the error term is an Hilbert-Schmidt operator if $v(x)\les \la x\ra^{-\frac52-1-\ell-}$. The case of $N=2$ also follows since $M_2=0$ and $\ell\in[0,2]$.
\end{proof}

The definition below classifies the type of resonances that may occur at the threshold energy. In Section~\ref{sec:classification}, we establish this classification in detail. Since the free resolvent is unbounded as $\lambda\to 0$, this definition is somehow analogous to the definition of resonances from \cite{JN} and \cite{Sc2} for the  two dimensional Schr\"odinger operators. However, there are important differences such as the appearance of the operators $T_1, T_2$ below.  Specifically, the lower order terms in the expansions interact in such a way that $T_1$ and $T_2$  are now the differences of two separate operators.  This phenomenon does not occur for the Schr\"odinger operators.
\begin{defin} \label{def:restype} 
	
	\begin{enumerate}[i)]
	
\item  Let $Q:=\mathbbm{1}-P$. We say that zero is regular point of the spectrum of $\Delta^2 +V$ provided $QTQ$ is invertible on $QL^2$. In that case we define $D_0:=(QTQ)^{-1}$ as an absolutely bounded operator on $QL^2$, see Lemma~\ref{lem:abs} below.  

\item  Assume that zero is not regular point of the spectrum. Let $S_1$ be the Riesz projection onto the kernel of $QTQ$. Then $QTQ+S_1$ is invertible on $QL^2$. Accordingly,  we
define $D_0 = (QT Q + S_1)^{-1}$,  as an operator on $QL^2$.  This doesn't conflict with the previous definition since $S_1=0$  when zero is regular. We say there is
a resonance of the first kind at zero if the operator 
\begin{align} \label{F def}
 T_1:=S_1TPTS_1- \frac{\|V\|_1}{3(8\pi)^2}  S_1vG_1vS_1  
 \end{align}  is invertible
on $S_1L^2$. 

\item  We say there is a resonance of the second kind if $T_1$ is not invertible on $S_1L^2$, but 
 \begin{align} \label{T2 def}
 T_2:= S_2vG_3vS_2+\frac{10}{3 }  S_2vWvS_2  
\end{align} 
 is invertible.  Here $S_2$ is the Riesz projection onto the kernel of $T_1$, and $W(x,y)=|x|^2|y|^2$.  Moreover, we define $D_1:= (T_1 +S_2)^{-1} $  as an operator on $S_1L^2$. 

\item  Finally if $T_2$ is not invertible we say there is a resonance of the third kind at zero. In this case the operator $T_3:= S_3 v G_4v S_3$  is always invertible on $S_3L^2$ where $S_3$ the Riesz projection onto the kernel of $T_2$, see Lemma~\ref{invertible}. 
We define $D_2:= (T_2 + S_3)^{-1}$ as an operator on $S_3L^2$.  

\end{enumerate}
\end{defin}

As in the four dimensional  operators, see the remarks after Definition~2.5 in \cite{EGG} and after Definition~3.2 in \cite{GT4}, $T$ is a compact perturbation of $U$.  Hence, the Fredholm alternative guarantees that $S_1$ is a finite-rank projection.
With these definitions first notice that, $ S_3 \leq S_2 \leq S_1 \leq Q $, hence all $S_j$ are finite-rank projections orthogonal to the span of $v$. Second, since $T$ is a self-adjoint operator and $S_1$ is the Riesz projection onto its kernel, we have 
$S_1 D_0= D_0 S_1 = S_1$.  Similarly, $S_2 D_1= D_1 S_2 = S_2$, $S_3 D_2= D_2 S_3 = S_3$.

\begin{lemma}\label{lem:abs} Let $|V(x)| \les \la x \ra^{-\beta}$ for some $\beta>5$, then $QD_0Q$ is absolutely bounded. 
\end{lemma}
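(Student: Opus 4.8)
The plan is to reduce absolute boundedness of $QD_0Q$ to absolute boundedness of $(QTQ+S_1)^{-1}$ on $QL^2$, and to obtain the latter from a Neumann-series / Fredholm argument built on the structure $T = U + vG_0v$. First I would recall that $U$ is a bounded (indeed isometric up to sign) multiplication operator, and that the kernel $vG_0v$ has the explicit form $-\frac{1}{8\pi}v(x)|x-y|v(y)$. Since $\beta>5$ gives $v(x)\lesssim \la x\ra^{-5/2-}$, the kernel $v(x)|x-y|v(y)$ is dominated by $\la x\ra^{-3/2-}\la y\ra^{-3/2-}$ after absorbing the polynomial growth $|x-y|\lesssim \la x\ra\la y\ra$; such a kernel is in $L^2(\R^3\times\R^3)$, so $vG_0v$ is Hilbert-Schmidt, hence compact and absolutely bounded. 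Therefore $T$ is an absolutely bounded operator (sum of a bounded multiplication operator and a Hilbert-Schmidt operator), and so is $QTQ$ and $QTQ+S_1$, since $Q=\mathbbm{1}-P$ differs from the identity by a rank-one absolutely bounded projection and $S_1$ is finite rank.

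The substantive point is to pass from "$QTQ+S_1$ is absolutely bounded and invertible" to "$(QTQ+S_1)^{-1}$ is absolutely bounded," which is not automatic. Here I would follow the standard device (as in \cite{Sc2,eg2} and the four-dimensional analogue in \cite{GT4}): write $QTQ+S_1 = QUQ + Q(vG_0v)Q + S_1$. The operator $QUQ$ is invertible on $QL^2$ as a multiplication-type operator restricted to $QL^2$ — more carefully, one isolates the invertible "bounded, absolutely bounded" part and treats the rest perturbatively. Concretely, let $B := QUQ + S_1$, which is absolutely bounded and boundedly invertible with absolutely bounded inverse (it is, modulo finite rank corrections from $Q$ and $S_1$, the multiplication operator $U$ restricted to a subspace), and let $K := Q(vG_0v)Q$, which is Hilbert-Schmidt. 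Then $QTQ+S_1 = B + K = B(\mathbbm{1} + B^{-1}K)$, and $\mathbbm{1}+B^{-1}K$ is invertible since $QTQ+S_1$ is. The key algebraic identity is
$$
(\mathbbm{1}+B^{-1}K)^{-1} = \mathbbm{1} - (\mathbbm{1}+B^{-1}K)^{-1}B^{-1}K,
$$
so that $(\mathbbm{1}+B^{-1}K)^{-1}$ equals the identity plus an operator of the form (absolutely bounded)$\circ$(Hilbert-Schmidt), which is Hilbert-Schmidt, hence absolutely bounded. Therefore $(QTQ+S_1)^{-1} = (\mathbbm{1}+B^{-1}K)^{-1}B^{-1}$ is absolutely bounded, and finally $QD_0Q = Q(QTQ+S_1)^{-1}Q$ is absolutely bounded because pre- and post-composition with the absolutely bounded projection $Q$ preserves the class.

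I expect the main obstacle to be the careful verification that the "bounded part" $B = QUQ + S_1$ genuinely has an absolutely bounded inverse on $QL^2$; unlike a multiplication operator on the whole space, the compression $QUQ$ is not literally multiplication, so one must check that the rank-one defect $P$ and the finite-rank $S_1$ do not destroy the $|\cdot|$-boundedness of the inverse. This is handled by writing $B^{-1}$ explicitly via the Sherman–Morrison / finite-rank perturbation formula: $U^{-1}=U$ is an absolutely bounded multiplication operator on all of $L^2$, and the corrections coming from $P$ and $S_1$ contribute finite-rank (hence absolutely bounded) terms with kernels controlled by $v$, which are integrable against $L^2$ functions precisely because $\beta>5$ ensures $v\in L^2$. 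The decay threshold $\beta>5$ enters exactly here and in the Hilbert-Schmidt bound on $vG_0v$, and tracking it is the only delicate bookkeeping; the rest is the routine Neumann-series manipulation above.
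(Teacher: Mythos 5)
Your overall strategy (split $QTQ+S_1$ into a ``$U$ part'' plus the Hilbert--Schmidt perturbation $QvG_0vQ$, then use a resolvent/Neumann identity to exhibit the inverse as an absolutely bounded operator plus a bounded-times-Hilbert--Schmidt term) is the same as the paper's, and your verification that $vG_0v$ is Hilbert--Schmidt when $\beta>5$ is correct. The genuine gap is your claim that $B=QUQ+S_1$ is boundedly invertible on $QL^2$ with absolutely bounded inverse, justified only by saying it is ``modulo finite rank corrections'' the multiplication operator $U$. Invertibility is not stable under finite-rank corrections, and the compression $QUQ$ can genuinely fail to be invertible: with $U=\operatorname{sign}V$ one has $\la Uv,v\ra=\int V\,dx$, and if $\int V\,dx=0$ then $\phi=Uv$ satisfies $\la \phi,v\ra=0$, so $\phi\in QL^2$, while $QUQ\phi=Qv=0$; hence $\ker QUQ\neq\{0\}$. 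Adding $S_1$ does not repair this, since $S_1$ is the Riesz projection onto $\ker QTQ$, which is unrelated to $\ker QUQ$ (there is no reason for $Uv$ to lie in $S_1L^2$). Your proposed Sherman--Morrison fix cannot manufacture the missing invertibility either: a finite-rank update formula yields an inverse only when the associated finite-dimensional datum is nonsingular, and in the scenario above it is not. So the factorization $QTQ+S_1=B(\mathbbm{1}+B^{-1}K)$ on which your argument rests is not available in general.

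The paper addresses exactly this point: it first argues under the assumption that $QUQ$ is invertible, and when it is not, it introduces the Riesz projection $\pi_0$ onto $\ker(QUQ)$, uses that $QUQ+\pi_0$ is invertible on $QL^2$, and runs the resolvent-identity argument for $Q[U+\pi_0+S_1+vG_0v-\pi_0]Q$, treating $-\pi_0$ (finite rank, hence absolutely bounded) together with $S_1$ and the Hilbert--Schmidt operator $vG_0v$ as the perturbation. With that modification your argument does go through, since $(QUQ+\pi_0)^{-1}$ is an explicit finite-rank modification of multiplication by $U$ and hence absolutely bounded, and the remaining terms are finite rank plus bounded$\circ$Hilbert--Schmidt. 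A further simplification, as in the paper, is that one does not need absolute boundedness of $(\mathbbm{1}+B^{-1}K)^{-1}$ as an intermediate object: since $QD_0Q$ is already known to be $L^2$-bounded, the composition $QD_0Q\,vG_0v$ is Hilbert--Schmidt, and absolute boundedness of $QD_0Q$ can be read off directly from the resolvent identity.
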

\begin{proof} We prove the statement when $S_1 \neq 0$.  We first assume that $QUQ$ is invertible $QL^2 \rightarrow QL^2$. Using the resolvent identities, we have 
\begin{align*}
QD_0Q = QUQ - QD_0Q (S_1 + vG_0v) QUQ = QUQ - S_1UQ -QD_0Q vG_0v QUQ
\end{align*}

Note that $QUQ$ is absolutely bounded. Moreover, since $S_1$ is finite rank, any summand containing $S_1$ is finite rank, and hence absolutely bounded. For $QD_0Q vG_0v QUQ$, we note $vG_0v$ is an Hilbert-Schmidt operator for any $v(x) \les \la x \ra^{-5/2-}$ and $QD_0Q$ is bounded. Therefore,  $QD_0QvG_0v$ is Hilbert-Schmidt. Since the composition of absolutely bounded operators is absolutely bounded, $QD_0Q$ is absolutely bounded. 

If $QUQ$ is not invertible, one can define $\pi_0$ as the Riesz projection onto the kernel of $QUQ$ and see $QUQ + \pi_0$ is invertible on $QL^2$. Therefore, one can consider $ Q[ U + \pi_0 + S_1 + vG_0v -\pi_0]Q$ in the above argument to obtain the statement.  
\end{proof}

Our aim in the rest of this section is to prove Theorem~\ref{thm:Minvexp} below obtaining suitable expansions for $[M^\pm(\lambda) ]^{-1}$ valid as $\lambda \to 0$ under the assumption  that zero is regular and also in the cases when there are threshold obstructions. Recall the notation \eqref{eq:Gamma def} and that the operators $\Gamma_\theta$ vary from line to line.
\begin{theorem}\label{thm:Minvexp}  If zero is a regular point of the spectrum and if $|v(x)|\les \la x\ra^{-\frac52-}$, then
$$[M^\pm(\lambda) ]^{-1} =Q\Gamma_0Q+\Gamma_1.$$
If there is a resonance of the first kind at zero and if $|v(x)|\les \la x\ra^{-\frac72-}$, then
$$[M^\pm(\lambda) ]^{-1} =Q\Gamma_{-1}Q+Q\Gamma_0+\Gamma_0Q+\Gamma_1.$$
If there is a resonance of the second kind at zero and if $|v(x)|\les \la x\ra^{-\frac{11}2-}$, then
\begin{multline*}[M^\pm(\lambda) ]^{-1} =  S_2\Gamma_{-3} S_2+S_2\Gamma_{-2} Q+ Q\Gamma_{-2}S_2 + S_2\Gamma_{-1}+\Gamma_{-1}S_2 \\+Q  \Gamma_{-1}Q +Q  \Gamma_0+  \Gamma_0Q +\Gamma_1. \end{multline*} 
If there is a resonance of the third kind at zero and if $|v(x)|\les \la x\ra^{-\frac{15}2-}$, then
\begin{multline*}
[M^\pm(\lambda) ]^{-1} =\frac{1}{\lambda^4} S_3D_3S_3\\ +  S_2\Gamma_{-3} S_2+S_2\Gamma_{-2} Q+ Q\Gamma_{-2}S_2 + S_2\Gamma_{-1}+\Gamma_{-1}S_2  +Q  \Gamma_{-1}Q +Q  \Gamma_0+  \Gamma_0Q +\Gamma_1. 
\end{multline*} 
\end{theorem}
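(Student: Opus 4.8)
The plan is to invert $M^\pm(\lambda)$ by an iterated Feshbach--Schur (equivalently, Jensen--Nenciu; cf.\ \cite{JenKat,JN}) reduction, peeling off one threshold subspace at a time, with Lemma~\ref{lem:M_exp} as input and all errors recorded through the $\Gamma_\theta$ calculus of \eqref{eq:Gamma def}. Write $M^\pm(\lambda)=\frac{\|V\|_1a^\pm}{\lambda}P+T+M_0^\pm(\lambda)$ with $M_0^\pm(\lambda)=\sum_{k=1}^N\lambda^kM_k^\pm+\Gamma_{N+\ell}$, keeping as many terms as the decay of $V$ allows. First I would invert along $L^2=PL^2\oplus QL^2$: $PM^\pm(\lambda)P$ is a scalar multiple of $P$, invertible on the one--dimensional space $PL^2$ with $(PM^\pm(\lambda)P)^{-1}=\frac{\lambda}{\|V\|_1a^\pm}P+P\Gamma_2P=\Gamma_1$, so the whole problem reduces to inverting the Schur complement
\[
B^\pm(\lambda):=QM^\pm(\lambda)Q-QM^\pm(\lambda)P\,\bigl(PM^\pm(\lambda)P\bigr)^{-1}PM^\pm(\lambda)Q
\]
on $QL^2$ and reassembling the $2\times 2$ block inverse (whose $QQ$--entry is $[B^\pm(\lambda)]^{-1}$ and whose other entries are $[B^\pm(\lambda)]^{-1}$ pre/post--composed with $O(\lambda)$ factors partly supported on $PL^2$). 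Plugging \eqref{M_0} in gives $B^\pm(\lambda)=QTQ+\sum_{k\ge1}\lambda^kB_k^\pm+\Gamma_{N+\ell}$ with Hilbert--Schmidt coefficients, $B_1^\pm=a_1^\pm QvG_1vQ-(\|V\|_1a^\pm)^{-1}QTPTQ$. When zero is regular, $QTQ$ has the absolutely bounded inverse $D_0$ (Lemma~\ref{lem:abs}), so $[B^\pm(\lambda)]^{-1}=D_0+\Gamma_1=Q\Gamma_0Q$ by a Neumann series and reassembly yields $[M^\pm(\lambda)]^{-1}=Q\Gamma_0Q+\Gamma_1$.

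In the resonant cases one iterates. Reducing $B^\pm(\lambda)$ by $S_1$ (with $QTQ+S_1$ invertible, $D_0=(QTQ+S_1)^{-1}$, $S_1D_0=D_0S_1=S_1$) leaves on $S_1L^2$ the operator $C^\pm(\lambda)=S_1-S_1(B^\pm(\lambda)+S_1)^{-1}S_1=\lambda\,S_1B_1^\pm S_1+\lambda^2C_2^\pm+\cdots$, and a direct computation gives the crucial identity
\[
S_1B_1^\pm S_1=a_1^\pm S_1vG_1vS_1-\frac{1}{\|V\|_1a^\pm}\,S_1TPTS_1=-\frac{4\pi(1\mp i)}{\|V\|_1}\,T_1,
\]
which is precisely where the constant $\frac{\|V\|_1}{3(8\pi)^2}$ in \eqref{F def} is forced and where the $\pm$--dependence factors out as an overall nonzero scalar (so $T_1$ may be defined without $\pm$). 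If $T_1$ is invertible on $S_1L^2$, then $[C^\pm(\lambda)]^{-1}=S_1\Gamma_{-1}S_1$, and unwinding the two reductions (using $S_1\le Q$) gives $[M^\pm(\lambda)]^{-1}=Q\Gamma_{-1}Q+Q\Gamma_0+\Gamma_0Q+\Gamma_1$.

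If $T_1$ is not invertible, reduce once more by $S_2=\ker T_1$. Here the structure emphasized in the introduction enters: a short computation shows $S_1vG_1vS_1=-2\sum_{j=1}^3S_1(x_jv)\langle\,\cdot\,,S_1(x_jv)\rangle$ and $S_1TPTS_1=\|V\|_1^{-1}\,S_1(Tv)\langle\,\cdot\,,S_1(Tv)\rangle$, so $T_1\ge0$ and $S_2$ is orthogonal not only to $v$ but also to $Tv$ and to every $x_jv$; this orthogonality makes the $\lambda^2$--coefficient of the $S_2$--reduced operator $\mathcal C^\pm(\lambda)=S_2-S_2(C^\pm(\lambda)+S_2)^{-1}S_2$ vanish, so that $\mathcal C^\pm(\lambda)$ begins at order $\lambda^3$. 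Computing this coefficient, the term $a_3^\pm S_2vG_3vS_2$ coming from $M_3^\pm$ combines with a term $-\frac{(a_1^\pm)^2}{a^\pm}S_2vWvS_2$ produced by two copies of $G_1$ passing through the $PL^2$--block ($W(x,y)=|x|^2|y|^2$), and the arithmetic of $a_1^\pm,a_3^\pm,a^\pm$ collapses this to $\frac{1\pm i}{960\pi}\bigl(S_2vG_3vS_2+\frac{10}{3}S_2vWvS_2\bigr)=\frac{1\pm i}{960\pi}\,T_2$; this is the origin of the constant $\frac{10}{3}$ in \eqref{T2 def}. If $T_2$ is invertible, then $[\mathcal C^\pm(\lambda)]^{-1}=S_2\Gamma_{-3}S_2$, and assembling the three reductions (using $S_2\le S_1\le Q$, $S_2D_1=D_1S_2=S_2$, and the composition/Neumann rules for the $\Gamma_\theta$ classes) reproduces the stated second--kind expansion. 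If $T_2$ is also not invertible, one last reduction by $S_3=\ker T_2$ is needed; again $T_2\ge0$ (it is a finite sum of rank--one positive operators built from the second moments $x_jx_kv$), so $S_3=\ker T_2$ is orthogonal to $v$, $Tv$ and all moments $x^\alpha v$ with $|\alpha|\le2$, which kills every contribution below order $\lambda^4$; the $\lambda^4$--coefficient of the innermost reduced operator, restricted to $S_3L^2$, collapses to $S_3vG_4vS_3=T_3$ --- a $\pm$--independent term, since $M_4^\pm=vG_4v$ --- which is invertible by Lemma~\ref{invertible}, so its inverse supplies the most singular piece $\frac{1}{\lambda^4}S_3D_3S_3$ with $D_3:=T_3^{-1}$; collecting and absorbing the remaining lower--order operators into the $\Gamma_\theta$ classes via $S_3\le S_2\le S_1\le Q$ gives the third--kind expansion.

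The genuinely laborious steps are (i) verifying the algebraic identification of the successively reduced operators with $T_1,T_2,T_3$ --- in particular the cancellation making the $\lambda^2$--term of the $S_2$--reduction (and its analogue at the third stage) vanish, the appearance of $W$, and the exact constants $\frac{\|V\|_1}{3(8\pi)^2}$ and $\frac{10}{3}$ --- since these interactions among the lower--order resolvent terms have no analogue in the one--operator--at--a--time inversion available for $-\Delta+V$; and (ii) the degree bookkeeping, i.e.\ checking how many terms of \eqref{M_0}, hence how much decay of $V$ ($\beta>5,7,11,15$ respectively), are needed so that every error produced by the Neumann series and the block reassembly lands in a $\Gamma_\theta$ class with the claimed exponent. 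The remaining points --- closure of the $\Gamma_\theta$ calculus under composition and Neumann inversion, and absolute boundedness of $D_0$ --- are routine given Lemmas~\ref{lem:M_exp} and \ref{lem:abs}.
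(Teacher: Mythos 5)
Your overall strategy (iterated Feshbach/Jensen--Nenciu reductions, the $\Gamma_\theta$ calculus, the identification $S_1B_1^\pm S_1=-\tfrac{1}{a^\pm\|V\|_1}T_1$ via $a^\pm a_1^\pm=\tfrac1{3(8\pi)^2}$) is essentially the paper's, and the regular and first-kind cases go through as you describe. The genuine gap is at the second stage, when $S_2\neq 0$ but $U_1:=S_1-S_2\neq 0$. You claim that the $\lambda^3$-coefficient of the $S_2$-reduced operator collapses to $\tfrac{1\pm i}{960\pi}\,T_2$, attributing the $W$-term solely to two copies of $G_1$ passing through the $PL^2$-block. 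But your own reduction produces, at the same order, the term $-S_2C_2(\hat T_1+S_2)^{-1}C_2S_2$, where $S_2C_2=-\tfrac{a_1}{a\|V\|_1}S_2vG_1vPTS_1\neq 0$ in general (only $PTS_2=0$ holds, not $PTS_1=0$), so one copy of $G_1$ and one copy of $T$ pass through the $PL^2$-block \emph{and} through the $U_1$-block with $(U_1T_1U_1)^{-1}$ in between. The effective operator that must be inverted is therefore not $T_2$ but
\begin{equation*}
S_2vG_3vS_2+\frac{10}{3}\Big(1-\frac{\la (U_1T_1U_1)^{-1}U_1Tv,\,U_1Tv\ra}{\|V\|_1}\Big)S_2vWvS_2 ,
\end{equation*}
exactly as in the paper's Feshbach inversion of $UB(\lambda)U$. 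Its invertibility does \emph{not} follow from the assumed invertibility of $T_2$ in \eqref{T2 def}: one needs the positivity bound $0\le \la (U_1T_1U_1)^{-1}U_1Tv,U_1Tv\ra\le\|V\|_1$ (the paper's Lemma~\ref{lem:posdef}, applied with $\mathcal S=U_1T_1U_1-U_1TPTU_1\ge 0$) together with the fact that $\ker T_2=\ker S_2vG_3vS_2$ (Lemma~\ref{lem:S_3}), so that $S_2vG_3vS_2$ is positive definite on $S_2L^2$ and the $W$-term, whatever its nonnegative coefficient, cannot destroy invertibility. Your proposal contains neither ingredient, so the inversion at the heart of the second-kind expansion (and hence also of the third-kind case, which reuses $(UB U)^{-1}$) is unjustified; the identification with $T_2$ is only correct in the special case $S_1=S_2$.

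A secondary, smaller omission of the same type occurs at the third stage: there you do get $T_3=S_3vG_4vS_3$ as the leading operator, but only because the cross terms $S_3BU_1(U_1BU_1)^{-1}U_1BS_3$ and $S_3BU_2(U_2\cdots)^{-1}U_2BS_3$ are of order $\lambda^5$ (using $S_3BU_1=\Gamma_3$, $S_3BU_2=\Gamma_4$ and the $\Gamma_{-1}$, $\Gamma_{-3}$ bounds on the inverted blocks); this order count should be stated, since at the previous stage the analogous cross term was \emph{not} subleading. With the positivity lemma, the kernel identification of $T_2$, and this bookkeeping added, your outline matches the paper's proof.
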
 
Roughly speaking, modulo a finite rank term, the contribution to \eqref{stone} of all of the operators in these expansions are of the same size with respect to the spectral parameter $\lambda$.  We show in Lemma~\ref{lem:QvR} that in the contribution to \eqref{resid} having the operator $Q$ on one side allows us to gain a power of $\lambda$, while having $S_2$ allows us to gain two powers of $\lambda$ modulo the contribution of $G_0$.

Recall from \eqref{Mexp} that $M^\pm(\lambda) =   A^\pm(\lambda) + M_0^\pm(\lambda)$. 
 If zero is regular then we have the following expansion for $(A^\pm(\lambda))^{-1}$.
\begin{lemma}\label{regular} Let $ 0 < \lambda \ll 1$. If zero is regular point of the spectrum of $H$. Then, we have
\begin{align} \label{M inverse regular}
(A^\pm(\lambda))^{-1}= QD_0Q + g^{\pm}(\lambda)  S, 
\end{align}
where $g^{\pm}(\lambda)=(\frac{ a^\pm \|V\|_1}{\lambda} +c)^{-1} $ for some $c\in \R$, and
\begin{align} \label{def:S}
	S =\left[\begin{array}{cc}
	P& -PTQD_0Q\\ -QD_0QTP &QD_0QTPTQD_0Q
	\end{array} \right],
\end{align}
is a self-adjoint, finite rank operator.
 
Moreover, the same formula holds for $(A^\pm(\lambda)+S_1)^{-1}$ with $D_0=(Q(T+S_1)Q)^{-1}$ if   zero is not regular. 
 \end{lemma}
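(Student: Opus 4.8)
The plan is to invert $A^\pm(\lambda)$ by a Feshbach--Schur complement computation in the orthogonal decomposition $L^2 = PL^2\oplus QL^2$. Recalling \eqref{Apm}, write $\Lambda^\pm(\lambda):=\f{\|V\|_1 a^\pm}{\lambda}$, so that $A^\pm(\lambda)=\Lambda^\pm(\lambda)P+T$; since $\Lambda^\pm(\lambda)P$ acts only on the one--dimensional space $PL^2=\mathrm{span}\{v\}$ and $PQ=QP=0$, this has the block form
\[
A^\pm(\lambda)=\begin{pmatrix}\Lambda^\pm(\lambda)P+PTP & PTQ\\ QTP & QTQ\end{pmatrix}
\]
with respect to $PL^2\oplus QL^2$. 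The structural observation driving everything is that on the one--dimensional space $PL^2$ any operator $PXP$ with $X=X^*$ having a real kernel equals a real scalar times $P$; in particular $PTP=cP$ with $c=\la T\widehat v,\widehat v\ra\in\R$, $\widehat v:=v/\|v\|_2$, because $T=U+vG_0v$ has a real symmetric kernel.

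Since zero is regular, $QTQ$ is invertible on $QL^2$ with inverse $QD_0Q$. I would then form the Schur complement relative to the $(2,2)$ block,
\[
\Sigma^\pm(\lambda):=\Lambda^\pm(\lambda)P+PTP-PTQ\,(QD_0Q)\,QTP=\big(\Lambda^\pm(\lambda)+c'\big)P,
\]
where $c'=c-d$ and $d\in\R$ is the real scalar (real because $D_0$ is self-adjoint on $QL^2$) determined by $PTQD_0QTP=dP$, again using one--dimensionality of $PL^2$. The point that makes the lemma work is the invertibility of $\Sigma^\pm(\lambda)$: since $a^\pm=\f{1\pm i}{8\pi}$ has nonzero imaginary part, $\Lambda^\pm(\lambda)+c'$ has imaginary part $\pm\f{\|V\|_1}{8\pi\lambda}\neq0$ for every $\lambda>0$, so there is no cancellation and $\Sigma^\pm(\lambda)$ is invertible on $PL^2$ with inverse $g^\pm(\lambda)P$, where $g^\pm(\lambda):=\big(\Lambda^\pm(\lambda)+c'\big)^{-1}=\big(\f{a^\pm\|V\|_1}{\lambda}+c'\big)^{-1}$; this is the asserted form (with $c'$ in the role of $c$), and $g^\pm(\lambda)=O(\lambda)$ as $\lambda\to0$.

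Next I would substitute into the standard $2\times2$ block inversion identity
\[
\begin{pmatrix}A&B\\C&D\end{pmatrix}^{-1}=\begin{pmatrix}\Sigma^{-1} & -\Sigma^{-1}BD^{-1}\\ -D^{-1}C\Sigma^{-1} & D^{-1}+D^{-1}C\Sigma^{-1}BD^{-1}\end{pmatrix},\qquad \Sigma:=A-BD^{-1}C,
\]
with $A=\Lambda^\pm(\lambda)P+PTP$, $B=PTQ$, $C=QTP$, $D=QTQ$, $D^{-1}=QD_0Q$, and $\Sigma^{-1}=g^\pm(\lambda)P$. Reading off the four entries gives $(1,1)=g^\pm(\lambda)P$, $(1,2)=-g^\pm(\lambda)PTQD_0Q$, $(2,1)=-g^\pm(\lambda)QD_0QTP$, and $(2,2)=QD_0Q+g^\pm(\lambda)QD_0QTPTQD_0Q$, i.e. $(A^\pm(\lambda))^{-1}=QD_0Q+g^\pm(\lambda)S$ with $S$ the matrix in \eqref{def:S}. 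Self-adjointness of $S$ follows from $P,Q,T,D_0$ being self-adjoint (the off-diagonal blocks are mutual adjoints, the diagonal blocks self-adjoint), and $S$ is finite rank because every block factors through the rank-one projection $P$; in fact a short computation shows $S=\|V\|_1^{-1}\la\,\cdot\,,\eta\ra\eta$ with $\eta=v-QD_0QTv$, so $S$ has rank one and is manifestly independent of $\lambda$ and of the $\pm$ sign.

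Finally, for the ``moreover'' statement: when zero is not regular, $S_1\le Q$ lies entirely in the $QL^2$ block, so $A^\pm(\lambda)+S_1$ has the same block structure with $(2,2)$ entry $QTQ+S_1=Q(T+S_1)Q$, which is invertible by Definition~\ref{def:restype}(ii) with inverse $QD_0Q$, $D_0=(Q(T+S_1)Q)^{-1}$. The computation above applies verbatim — only the real constants $c,d$, hence $c'$, and the operator $S$ are recomputed with this $D_0$ — and yields the identical formula. The only step requiring genuine care is the reduction of the Schur complement to a real multiple of $P$: this is precisely what rules out an accidental zero of $\Lambda^\pm(\lambda)+c'$ and produces the clean $g^\pm(\lambda)=O(\lambda)$ behaviour; everything else is bookkeeping.
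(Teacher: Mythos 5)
Your proposal is correct and follows essentially the same route as the paper: a Feshbach/Schur-complement inversion in the $PL^2\oplus QL^2$ block decomposition, reducing the Schur complement to $(\frac{a^\pm\|V\|_1}{\lambda}+c)P$ with $c\in\R$ (the paper writes $c=\mathrm{Tr}(PTP-PTQD_0QTP)$, which is your $c'$), then reading off the block inverse to get $QD_0Q+g^\pm(\lambda)S$, with the non-regular case handled identically after replacing $QTQ$ by $Q(T+S_1)Q$. Your added observations — invertibility of the Schur complement via the nonzero imaginary part of $a^\pm$, and the explicit rank-one form $S=\|V\|_1^{-1}\la\cdot,\eta\ra\eta$ with $\eta=v-QD_0QTv$ — are correct refinements of what the paper states.
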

 \begin{proof} We prove the statement when $S_1\neq 0$. The proof is identical in the regular case. Recalling \eqref{Apm}, we write $A^\pm(\lambda)+S_1$ in the block format (using $PS_1=S_1P=0$):  
	\begin{align}
	A^\pm(\lambda)+S_1 = \left[\begin{array}{cc}
	\frac{ a^{\pm} \|V\|_1}{\lambda}  P + PTP & PTQ  \\ QTP &Q(T+S_1)Q 
	\end{array} \right]  :=\left[\begin{array}{cc}
	a_{11} & a_{12}  \\ a_{21} & a_{22}
	\end{array} \right]  
	\end{align}	
Since  $Q(T+S_1)Q$ is invertible, by Feshbach formula (see, e.g.,   Lemma~2.8 in \cite{eg2}) invertibility of  $A^\pm(\lambda)+S_1$ hinges upon the existence
    of $d=(a_{11}-a_{12}a_{22}^{-1}a_{21})^{-1}$. Denoting $D_0=(Q(T+S_1)Q)^{-1}:QL^2\to QL^2$, we have
	\begin{align*}
		d= \big(\tfrac{ a^{\pm} \|V\|_1}{\lambda} P+PTP-PTQD_0QTP\big)^{-1} =\big(\frac{ a^\pm \|V\|_1}{\lambda} +c\big)^{-1}P=:g^{\pm}(\lambda)P
	\end{align*}
	with $c = Tr(P T P - P T QD_0QT P) \in \R $.  
	Therefore,  $d$ exists if $\lambda$ is sufficiently small.
Thus, by the Feshbach formula,
	\begin{align}\label{feshbach}
		(A^\pm(\lambda)+S_1)^{-1}&=\left[\begin{array}{cc} d & -da_{12}a_{22}^{-1}\\
		-a_{22}^{-1}a_{21}d & a_{22}^{-1}a_{21}da_{12}a_{22}^{-1}+a_{22}^{-1}
		\end{array}\right]\\
		&=QD_0Q+ g^{\pm}(\lambda) S. \label{Ainverse}
	\end{align}
\end{proof}

Assume that $v(x)\les \la x\ra^{-\frac52-}$. Using \eqref{Mexp}, \eqref{M_0}, the resolvent identity and Lemma~\ref{regular} when zero is regular, we may write (for some $\epsilon>0$)
\begin{multline*}
[M^{\pm}]^{-1} = [A^\pm + M_0^\pm ]^{-1}=  [A^\pm + \Gamma_{\epsilon} ]^{-1}\\
=
[A^\pm ]^{-1}- [A^\pm ]^{-1}\Gamma_{\epsilon}[A^\pm ]^{-1}+ [A^\pm ]^{-1}\Gamma_{\epsilon}[M^{\pm}]^{-1}\Gamma_{\epsilon}[A^\pm ]^{-1}
=Q\Gamma_0Q+\Gamma_1,
\end{multline*}
proving Theorem~\ref{thm:Minvexp} in the regular case.

Assuming that $v(x)\les \la x\ra^{-\frac72-}$, by Lemma~\ref{lem:M_exp}, we have $M_0^\pm=\Gamma_1$. Also using \eqref{Mexp}  and Lemma~\ref{regular} we obtain the following expansion in the case  zero is not regular:
\begin{multline}\label{M+S1ex}
(M^{\pm} (\lambda)+S_1)^{-1} = (A^\pm(\lambda)+S_1 + M_0^{\pm}(\lambda))^{-1} \\= (A^\pm(\lambda)+S_1)^{-1} \sum_{k=0}^N (-1)^k [ M_0^{\pm}(\lambda) (A^\pm(\lambda)+S_1)^{-1}]^k+\Gamma_{N+1},\,\, N=0,1,\ldots,
\\ 
=QD_0Q+ \Gamma_1.
\end{multline}

The following lemma from \cite{JN} is the main tool to obtain the expansions of $ M^\pm(\lambda)^{-1}$ when zero is not regular.
\begin{lemma}\label{JNlemma}
	Let $M$ be a closed operator on a Hilbert space $\mathcal{H}$ and $S$ a projection. Suppose $M+S$ has a bounded
	inverse. Then $M$ has a bounded inverse if and only if
	$$
	B:=S-S(M+S)^{-1}S
	$$
	has a bounded inverse in $S\mathcal{H}$, and in this case
	\begin{align} \label{Minversegeneral}
	M^{-1}=(M+S)^{-1}+(M+S)^{-1}SB^{-1}S(M+S)^{-1}.
	\end{align}
\end{lemma}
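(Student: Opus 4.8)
The plan is to prove the equivalence and the formula by reducing everything to elementary operator identities involving the \emph{bounded} operator $A:=(M+S)^{-1}$, together with $M=A^{-1}-S$ on the common domain $D(M)=D(M+S)$. Throughout I use that $S$ is an orthogonal projection, $S^2=S$, and the structural facts about $B=S-SAS$: its range lies in $S\mathcal{H}$ and it annihilates $(I-S)\mathcal{H}$, so $SB=BS=B$; and, whenever $B$ is invertible on $S\mathcal{H}$, the zero-extension $\widetilde B:=SB^{-1}S$ on $\mathcal{H}$ satisfies $\widetilde B=S\widetilde B=\widetilde BS$ and $B\widetilde B=\widetilde BB=S$. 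The only genuine care required is keeping the bookkeeping of $S$ straight --- distinguishing $B$ as an operator on $S\mathcal{H}$ from its extension to $\mathcal{H}$, and using the compression rule $S(XY)S=(SXS)(SYS)$ whenever an extra $S$ can be inserted between $X$ and $Y$ at no cost --- so I expect that, rather than any conceptual point, to be where an error could most easily slip in.

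For the ``if'' direction, suppose $B$ has a bounded inverse on $S\mathcal{H}$ and set $N:=A+A\widetilde BA$, a bounded operator with $\operatorname{ran}N\subseteq\operatorname{ran}A=D(M)$. I would compute directly
\[
MN=(A^{-1}-S)(A+A\widetilde BA)=I+\widetilde BA-SA-SA\widetilde BA=I-SA+(\widetilde B-SA\widetilde B)A,
\]
and then observe that $\widetilde B=S\widetilde B$ gives $\widetilde B-SA\widetilde B=(S-SAS)\widetilde B=B\widetilde B=S$, whence $MN=I-SA+SA=I$. The symmetric computation, using $\widetilde B=\widetilde BS$ and $\widetilde BB=S$, gives $NM=I$ on $D(M)$. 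Thus $N$ is a bounded two-sided inverse of $M$; since $N=A+A(SB^{-1}S)A$, this is precisely the formula \eqref{Minversegeneral}. (Closedness of $M$ enters only to certify that exhibiting such an $N$ makes $M^{-1}=N$ a bounded operator.)

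For the ``only if'' direction, suppose $M^{-1}$ is bounded. The second resolvent identity yields
\[
A-M^{-1}=(M+S)^{-1}-M^{-1}=M^{-1}\big(M-(M+S)\big)(M+S)^{-1}=-M^{-1}SA,
\]
and symmetrically $A-M^{-1}=-ASM^{-1}$. Compressing both identities to $S\mathcal{H}$ (sandwiching with $S$ and using $S^2=S$), and writing $a:=SAS$, $m:=SM^{-1}S$ as operators on $S\mathcal{H}$, I obtain $a=m-ma$ and $a=m-am$. Setting $b:=I_{S\mathcal{H}}-a=B|_{S\mathcal{H}}$, these rearrange to $(I+m)b=I$ and $b(I+m)=I$ on $S\mathcal{H}$; hence $B$ is invertible on $S\mathcal{H}$ with $B^{-1}=S+SM^{-1}S$. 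This closes the equivalence, and substituting this $B^{-1}$ back into $A+A(SB^{-1}S)A$ reproduces the formula already obtained in the first direction.

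The hard part, such as it is, is purely organizational: being scrupulous about where each projection $S$ sits and not conflating $B$ with $\widetilde B$; once the identities $SB=BS=B$, $\widetilde B=S\widetilde BS$, $B\widetilde B=\widetilde BB=S$, and the compression rule are in hand, both directions are short algebra. I would state these preliminary identities explicitly at the outset so that the two main computations each become two or three lines.
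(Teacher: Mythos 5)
Your proof is correct: the paper does not prove this lemma itself (it is quoted from Jensen--Nenciu \cite{JN}), and your direct verification --- checking $MN=I$ and $NM=I$ on $D(M)$ for $N=A+A\widetilde{B}A$ with $A=(M+S)^{-1}$, $\widetilde{B}=SB^{-1}S$, and in the converse extracting $B^{-1}=S+SM^{-1}S$ from the two resolvent identities compressed by $S$ --- is exactly the standard argument in that source. Note also that your computations use only $S^2=S$ (never orthogonality), so the opening remark about $S$ being an orthogonal projection can simply be dropped and the lemma holds, as stated, for an arbitrary bounded projection.
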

We use this lemma with $M=M^{\pm}(\lambda)$ and   $S=S_1$.  Much of our technical work in the rest of this  section is devoted to finding appropriate expansions for the inverse of $B^\pm(\lambda)=S_1-S_1(M^{\pm}(\lambda)+S_1)^{-1}S_1$ on $S_1L^2$ under  various spectral assumptions.  For simplicity we work with $+$ signs and drop the superscript. 

We first list the orthogonality relations of various operators and projections we need. 
\begin{align}
\label{ort:1} &S_iD_j=D_jS_i=S_i, i>j,\\
\label{ort:2}&S_3\leq S_2\leq S_1\leq Q =P^\perp,\\
\label{ort:3}&S_1S= -S_1TP+S_1TPTQD_0Q,\,\,\,SS_1=-PTS_1+QD_0QTPTS_1,\\
\label{ort:4}&S_1SS_1=S_1TPTS_1,\\
\label{ort:5}&SS_2=S_2S=0,\\
\label{ort:6}&QM_1S_2=S_2M_1Q=S_3M_1=M_1S_3=0,\\
\label{ort:7}&S_2M_3S_3=S_3M_3S_2=0.
\end{align}
These can be checked using \eqref{def:S}, \eqref{M134}, and $Qv=S_2TP=S_2vG_1vQ=S_2x_jv=S_3x_ix_jv=0$, $i,j=1,2,3$ (see Lemmas~\ref{lem:S_2} and \ref{lem:S_3}  below).

Using \eqref{M_0} with $N=1$, $\ell=0+$ and \eqref{Ainverse} in \eqref{M+S1ex}, and then using \eqref{ort:4}, we obtain
\be \label{BlambdaS1}
B(\lambda)=S_1-S_1(M (\lambda)+S_1)^{-1}S_1 =- g(\lambda)S_1TPTS_1 + \lambda S_1M_1S_1 +\Gamma_{1+}, 
\ee
provided that $|v(x)|\les \la x\ra^{-\frac72-}$.

Using \eqref{M134}, we have 
\begin{multline}\label{T1calc}
g(\lambda)S_1TPTS_1 -\lambda S_1M_1S_1 = g(\lambda)[S_1TPTS_1 -a_1\frac{\lambda}{g(\lambda)} S_1vG_1vS_1]\\ =g(\lambda) T_1-ca_1\lambda g(\lambda) S_1vG_1vS_1=g(\lambda) T_1+\Gamma_2,
\end{multline}
where 
$$
T_1=S_1TPTS_1- \frac{\|V\|_1}{3(8\pi)^2}  S_1vG_1vS_1.
$$
The second equality follows from
$$
	g(\lambda)[S_1TPTS_1 -a_1\frac{\lambda}{g(\lambda)} S_1vG_1vS_1]
	=g(\lambda)[S_1TPTS_1 -a_1(a  \|V\|_1 +c\lambda) S_1vG_1vS_1],
$$
and recalling the definitions of $g(\lambda)$, \eqref{adef} and \eqref{M134} to see
$$
	a^\pm a_1^\pm =\frac{(\pm i+1)(\mp i+1)}{(8\pi)^2 (3!)}=\frac{2}{(8\pi)^2 (3!)}=\frac{1}{3(8\pi)^2.}
$$
In the case when there is a resonance of the first kind at zero, namely when $T_1$ is invertible,  using \eqref{T1calc} in \eqref{BlambdaS1}, we obtain
$$ 
B(\lambda)^{-1} =(-g(\lambda)T_1+\Gamma_{1+})^{-1}= \Gamma_{-1},
$$ 
provided that  $v(x)\les \la x\ra^{-\frac{7}2-}$.
Using this and \eqref{M+S1ex} in \eqref{Minversegeneral},   we obtain
\begin{multline*}
[M (\lambda)]^{-1}=QD_0Q+\Gamma_1+(QD_0Q+\Gamma_1)S_1 \Gamma_{-1}S_1(QD_0Q+\Gamma_1)\\ = Q\Gamma_{-1}Q+Q\Gamma_0+\Gamma_0Q+\Gamma_1,
\end{multline*}
proving Theorem~\ref{thm:Minvexp} in the case when there is a resonance of the first kind.

In the case when there is a resonance of the second or third kind, namely when $S_2\neq 0$, we need more detailed expansions for $B(\lambda)$, and hence for $(M (\lambda)+S_1)^{-1}$.

Using \eqref{M_0} and \eqref{M134} in \eqref{M+S1ex} we  obtain  
\begin{multline}\label{M+S1ex2}
(M (\lambda)+S_1)^{-1} = QD_0Q+g^\pm(\lambda)S -\lambda QD_0QM_1QD_0Q \\
-\lambda g^\pm(\lambda)\big[QD_0QM_1S+SM_1QD_0Q\big]+ \lambda^2 QD_0Q (M_1QD_0Q)^2 \\
-\lambda (g^\pm(\lambda))^2 SM_1S +\lambda^2g^\pm(\lambda) \big[S (M_1QD_0Q)^2+QD_0Q M_1SM_1QD_0Q +(QD_0Q M_1)^2S\big]\\
-\lambda^3 QD_0Q\big[M_3+M_1(QD_0QM_1)^2 \big]QD_0Q +\Gamma_{3+},
\end{multline}
provided that $v(x)\les \la x\ra^{-\frac{11}2-}$.

Using \eqref{ort:1}-\eqref{ort:4} and \eqref{T1calc} in \eqref{M+S1ex2}, we obtain
\begin{multline}\label{S1M+S1S1}
S_1(M (\lambda)+S_1)^{-1}S_1 = S_1+ g(\lambda) T_1 \\ -ca_1\lambda g(\lambda) S_1vG_1vS_1 
-\lambda g (\lambda)S_1\big[M_1S+SM_1\big]S_1+ \lambda^2 S_1 M_1QD_0QM_1S_1 \\
-\lambda (g (\lambda))^2 S_1SM_1SS_1 +\lambda^2g (\lambda) \big[S_1S M_1QD_0QM_1S+S_1 M_1SM_1S_1 +S_1M_1QD_0Q M_1SS_1\big]\\
-\lambda^3 S_1\big[M_3+M_1(QD_0QM_1)^2 \big]S_1 +\Gamma_{3+}.
\end{multline}
Therefore 
\begin{multline*}
B(\lambda)= S_1- S_1(M (\lambda)+S_1)^{-1}S_1 = -g(\lambda)T_1\\+  a_1\lambda g(\lambda) 
S_1(cM_1 + S M_1  +M_1S )S_1 
- \lambda^2  S_1M_1QD_0QM_1S_1\\
+\lambda (g (\lambda))^2 S_1SM_1SS_1 +\lambda^2g (\lambda) \big[S_1S M_1QD_0QM_1S+S_1 M_1SM_1S_1 +S_1M_1QD_0Q M_1SS_1\big]\\
+\lambda^3 S_1\big[M_3+M_1(QD_0QM_1)^2 \big]S_1 +\Gamma_{3+}.
\end{multline*}
 Let $U_1=S_1-S_2$, $U_2=S_2-S_3$, and $U=U_1+U_2$. In block form, we have
 \begin{align}\label{eq:BlambdaS3}
	B(\lambda) = \left[\begin{array}{cc} S_3B(\lambda) S_3
	 & S_3 B(\lambda) U  \\ UB(\lambda) S_3  &  U B(\lambda) U 
	\end{array} \right],
	\end{align}
	 \begin{align}\label{eq:Blambda}
	UB(\lambda)U = \left[\begin{array}{cc} U_2 B(\lambda) U_2
	 & U_2 B(\lambda) U_1  \\ U_1B(\lambda) U_2  &  U_1B(\lambda) U_1
	\end{array} \right].
	\end{align}
We first invert $UB(\lambda) U$ for small $\lambda$. We have 
$$
U_1B(\lambda) U_1=-g(\lambda) U_1T_1U_1+U_1\Gamma_2U_1,
$$
Using \eqref{ort:5} and \eqref{ort:6}, we obtain  
$$
U_1B(\lambda) U_2  = a_1\lambda g(\lambda) 
 U_1S vG_1v  U_2  +U_1\Gamma_3U_2=-a_1\lambda g(\lambda) 
 U_1TP vG_1v  U_2  +U_1\Gamma_3U_2,
$$
Similarly,
$$
U_2B(\lambda)U_1   =-a_1\lambda g(\lambda) 
  U_2  vG_1v PT U_1 +U_2\Gamma_3U_1,
$$
and
\begin{multline*}
U_2B(\lambda) U_2 = \lambda^3 U_2M_3U_2- \lambda^2g(\lambda)U_2M_1SM_1U_2+\Gamma_4 \\
=a_3\lambda^3 U_2vG_3vU_2-a_1^2\lambda^2g(\lambda)U_2vG_1vSvG_1vU_2+U_2\Gamma_{3+}U_2.
\end{multline*}
Note that by \eqref{ort:6}
$$
U_2vG_1vSvG_1vU_2= U_2vG_1vPvG_1vU_2= \|V\|_{L^1} U_2vWvU_2,
$$ 
where $W(x,y)=|x|^2|y|^2$. In the second equality we used $G_1(x,y)=|x|^2-2x\cdot y +|y|^2$ and $S_2 x_j v =S_2v=0$.
Also noting that 
$$
\frac{a_3\lambda}{ g(\lambda)}=a_3a\|V\|_1+ca_3\lambda=\frac{2i\|V\|_1}{5!(8\pi)^2}+ca_3\lambda,\,\,\,\,\,\,
a_1^2=\frac{-2i}{(3!)^2(8\pi)^2},
$$
we obtain
\begin{multline*}
U_2B(\lambda) U_2 =  \frac{2i }{5!(8\pi)^2} \|V\|_{L^1}\lambda^2g(\lambda) [    U_2vG_3vU_2+\frac{10}{3 }  U_2vWvU_2]+\Gamma_4\\
= \frac{2i }{5!(8\pi)^2} \|V\|_{L^1}\lambda^2g(\lambda) U_2T_2U_2+U_2\Gamma_{3+}U_2.
\end{multline*}

If   $U_1=0$, i.e. $S_1=S_2$, then we can invert $UBU$ as
$$
(UB(\lambda)U)^{-1}= \frac{5!(8\pi)^2} {2i \|V\|_{L^1}\lambda^2g(\lambda) } (U_2T_2U_2)^{-1}+U_2\Gamma_{-3+}U_2 = U_2  \Gamma_{-3}U_2.
$$

If  $U_1 \neq 0$, we invert $UB(\lambda)U$ using Feshbach's formula. 
Note that, we can rewrite \eqref{eq:Blambda} using the calculations above:   
$$
	UB(\lambda)U = -g(\lambda)\left[\begin{array}{cc} -\frac{2i }{5!(8\pi)^2} \|V\|_{L^1}\lambda^2  U_2T_2U_2+U_2\Gamma_{2+}U_2
	 &  a_1\lambda   
  U_2  vG_1v PT U_1 +U_2\Gamma_2 U_1  \\  a_1\lambda  
 U_1TP vG_1v  U_2  + U_1\Gamma_2U_2 &    U_1T_1U_1+ U_1\Gamma_1 U_1
	\end{array} \right]. 
$$
Note that $a_{22}$ is invertible. Therefore $U B(\lambda)U$ is invertible  provided the following   exists  
\begin{multline*}
d=\Bigg( -\frac{2i \|V\|_{L^1}}{5!(8\pi)^2} \lambda^2  U_2T_2U_2 - a_1^2\lambda^2 S_2  vG_1v PT U_1 (U_1T_1U_1)^{-1}  U_1TP vG_1v  U_2 + U_2 \Gamma_{2+} U_2 \Bigg)^{-1}
\\ =- \frac{5!(8\pi)^2}{2i  \|V\|_{L^1}\lambda^2}\Bigg( U_2 T_2 U_2- \frac{10}{3\|V\|_{L^1}} U_2  vG_1v PT U_1 (U_1T_1U_1)^{-1}  U_1TP vG_1v  U_2  \Bigg)^{-1}+U_2\Gamma_{-2+}U_2.
\end{multline*}
Note that, since $S_2v=S_2x_jv=0$ we can rewrite the operator in parenthesis as
\begin{multline*}
U_2T_2U_2 - \frac{10\la (U_1T_1U_1)^{-1} U_1Tv,U_1Tv \ra}{3\|V\|_{L^1}}   U_2  v W v U_2 \\
= U_2vG_3vU_2+\frac{10}{3 }  \Big(1-\frac{\la (U_1T_1U_1)^{-1} U_1Tv,U_1Tv \ra}{\|V\|_{L^1}}\Big) U_2vWvU_2.
\end{multline*}
Note that by Lemma~\ref{lem:S_3} below the kernel of $T_2$ agrees with the kernel of $S_2vG_3vS_2$. Therefore $U_2vG_3vU_2$ is invertible and positive definite. Since $U_2vWvU_2$ is positive semi-definite, the inverse exists if we can prove that $\la (U_1T_1U_1)^{-1} U_1Tv,U_1Tv \ra\leq \|V\|_{L^1}$.
Note that 
$$
U_1TPTU_1 u = \frac1{\|V\|_{L^1}} U_1(Tv) \la u,U_1(Tv)\ra.  
$$
Also note that $U_1T_1U_1-U_1TPTU_1$ is positive semi-definite. Therefore the required bound follows from the following lemma with $\mathcal H=U_1L^2$, $z= U_1(Tv)$,  $\alpha=\frac1{\|V\|_{L^1}}$,  and $\mathcal S =U_1T_1U_1-U_1TPTU_1$.  
\begin{lemma}\label{lem:posdef} Let $\mathcal H$ be a Hilbert space. Fix $z\in \mathcal H$ and $\alpha>0$ and let $\mathcal T(u)=\alpha z\la u,z\ra$, $u\in\mathcal H$. Let $\mathcal S$ be a positive semi-definite operator on $\mathcal H$  so that $\mathcal T+\mathcal S$ is invertible.   Then,
$$
0\leq \big\la (\mathcal  T+\mathcal  S)^{-1}z, z\big\ra \leq \frac1\alpha.
$$   
\end{lemma}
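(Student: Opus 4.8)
The plan is to set $A:=\mathcal T+\mathcal S$ and exploit that $A$ is self-adjoint and positive semi-definite, being a sum of two such operators, and invertible by hypothesis; hence $A$ is positive definite and so is $A^{-1}$. In particular $c:=\la A^{-1}z,z\ra$ is real and nonnegative, which already establishes the lower bound $0\le \la(\mathcal T+\mathcal S)^{-1}z,z\ra$.

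For the upper bound I would introduce $w:=A^{-1}z$, so that $\la w,z\ra=c$, and read off the identity
\[
z=Aw=\mathcal T w+\mathcal S w=\alpha\la w,z\ra\,z+\mathcal S w=\alpha c\,z+\mathcal S w,
\]
which rearranges to $\mathcal S w=(1-\alpha c)z$. Pairing both sides with $w$ and using $\mathcal S\ge 0$ then gives
\[
0\le \la\mathcal S w,w\ra=(1-\alpha c)\la z,w\ra=(1-\alpha c)\,\overline{c}=(1-\alpha c)\,c .
\]
If $c=0$ the inequality $c\le 1/\alpha$ is trivial; otherwise, dividing by $c>0$ forces $1-\alpha c\ge 0$, i.e.\ $\la A^{-1}z,z\ra=c\le 1/\alpha$, which is the claim. (Equivalently, one could invoke the Sherman--Morrison formula when $\mathcal S$ is invertible and pass to the limit, but the substitution above avoids any regularization.)

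Since this is essentially a one-line computation once the substitution $w=A^{-1}z$ is made, I do not expect a genuine obstacle. The only point deserving a moment's care is the passage from ``$A$ is invertible and positive semi-definite'' to ``$A$ is positive definite,'' which is what guarantees that $c$ is real and $c\ge 0$; this follows from the bound $\la Au,u\ra\ge \|A^{-1}\|^{-1}\|u\|^2$, and in the intended application $\mathcal S$ and $\mathcal T$ act on a finite-dimensional subspace, where it is immediate.
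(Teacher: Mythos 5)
Your proof is correct and follows essentially the same route as the paper: the key step in both is the substitution $w=(\mathcal T+\mathcal S)^{-1}z$, the identity $\mathcal Sw=z-\alpha\la w,z\ra z$, and pairing with $w$ to exploit $\mathcal S\geq 0$. The only (harmless) difference is that you establish $\la w,z\ra\in\R$ and $\la w,z\ra\geq0$ beforehand from positivity of $(\mathcal T+\mathcal S)^{-1}$, whereas the paper reads both facts off the same inequality $0\leq\la z,w\ra-\alpha|\la z,w\ra|^2$.
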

\begin{proof} 
Let $w=(\mathcal  T+\mathcal  S)^{-1}z$. We have
$$
z=\mathcal  Tw+\mathcal  Sw=\alpha z\la w,z\ra +\mathcal  Sw, \text{ and hence } \mathcal  Sw=z-\alpha z\la w,z\ra.
$$
 Then since $\mathcal  S$ is positive semi-definite, 
$$
0\leq \la \mathcal  Sw,w\ra = \big\la z-\alpha z\la w,z\ra,w\big\ra =\la z,w\ra-\alpha |\la z,w\ra|^2. 
$$
Therefore, $\la (\mathcal  T+\mathcal  S)^{-1}z,z\ra =\la w,z\ra\in \R$ and
$$
0\leq \la w, z\ra \leq \frac1\alpha.   \quad  \qedhere
$$ 
\end{proof}

We conclude that
$$
d=\lambda^{-2}U_2DU_2+U_2\Gamma_{-2+}U_2=U_2\Gamma_{-2}U_2.
$$
 Using this in the Feshbach formula \eqref{feshbach}, we obtain
\begin{multline} \label{eq:UBUinverse}
		 (UB(\lambda)U)^{-1} =-\frac1{g(\lambda)}\left[\begin{array}{cc} U_2\Gamma_{-2} U_2 +U_2\Gamma_{-1}U_2 & U_2\Gamma_{-1 }U_1\\
		 U_1 \Gamma_{-1} U_2 & U_1 \Gamma_{0} U_1 
		\end{array}\right] \\ = U_2\Gamma_{-3} U_2+U_2\Gamma_{-2}U_1+U_1\Gamma_{-2}U_2+U_1\Gamma_{-1}U_1. 
	\end{multline}

We now focus on the case $S_3=0$, $U_2=S_2\neq 0$. We have $B(\lambda)^{-1}=(UB(\lambda)U)^{-1}$. Using \eqref{M+S1ex2} and orthogonality relations \eqref{ort:1}-\eqref{ort:6}, we have 
$$ 
S_2(M  (\lambda)+S_1)^{-1} =(M  (\lambda)+S_1)^{-1}S_2 =  S_2    +\Gamma_2.
$$
Also recall that 
$$
(M  (\lambda)+S_1)^{-1} = QD_0Q+\Gamma_1.
$$
Using these in \eqref{Minversegeneral}, we have  
\begin{multline}\label{MinverseS2nonzero}
M(\lambda)^{-1}= 
QD_0Q+\Gamma_1\\ + (M  (\lambda)+S_1)^{-1}\big[U_2\Gamma_{-3} U_2+U_2\Gamma_{-2}U_1+U_1\Gamma_{-2}U_2+U_1\Gamma_{-1}U_1\big](M  (\lambda)+S_1)^{-1}\\= S_2\Gamma_{-3} S_2+S_2\Gamma_{-2} Q+ Q\Gamma_{-2}S_2 + S_2\Gamma_{-1}+\Gamma_{-1}S_2 +Q  \Gamma_{-1}Q +Q  \Gamma_0+  \Gamma_0Q +\Gamma_1.
\end{multline}
 This expansion is valid also in the case $U_1=0$, proving Theorem~\ref{thm:Minvexp} in the case of resonance of the second kind.

We consider the final case, when $S_3\neq 0$.
Using 
$$
(A^\pm(\lambda)+S_1)^{-1}   S_3 =S_3 (A^\pm(\lambda)+S_1)^{-1}= S_3, 
$$
$$
S_3 M_0^{\pm}=M_0^{\pm}S_3= \Gamma_3,
$$
$$
S_3 M_0^{\pm}S_3= \lambda^4 S_3 vG_4v S_3+\Gamma_5 =\lambda^4 T_3+\Gamma_5,
$$
we have
$$
S_3B^\pm  S_3=  - S_3 \sum_{k=1}^4 (-1)^k [ M_0^{\pm}(\lambda) (A^\pm(\lambda)+S_1)^{-1}]^k  S_3   +\Gamma_5\\ =\lambda^4 T_3  +\Gamma_5,
$$
 provided that $v(x)\les \la x \ra^{-\frac{15}{2}-}$.  
 

If $U\neq 0$, we invert $B(\lambda)$ using Feshbach's formula for the block form \eqref{eq:BlambdaS3}. Note that
$$
d=\big(S_3BS_3-S_3BU(UBU)^{-1}UBS_3\big)^{-1}.
$$  
The leading term is $\lambda^4 T_3  +\Gamma_5$. We write the second term as 
\begin{multline*}
S_3BU_2(UBU)^{-1}U_2BS_3+S_3BU_1(UBU)^{-1}U_2BS_3\\+S_3BU_2(UBU)^{-1}U_1BS_3+S_3BU_1(UBU)^{-1}U_1BS_3=\Gamma_5.
\end{multline*}
To obtain the estimate, we used $S_3BU_2=\Gamma_4$, $S_3BU_1=\Gamma_3$, and \eqref{eq:UBUinverse}. Therefore, for small $\lambda>0,$
$$
d=\lambda^{-4}D_3+S_3\Gamma_{-3}S_3=S_3\Gamma_{-4}S_3.
$$
Using this in Feshbach's formula  for the block form \eqref{eq:BlambdaS3} we obtain
\begin{multline*}
B(\lambda)^{-1}=\lambda^{-4}D_3+S_3\Gamma_{-3}S_3 + S_3\Gamma_{-4}S_3BU (UBU)^{-1}+(UBU)^{-1}UBS_3\Gamma_{-4}S_3 \\+\lambda^{-4}(UBU)^{-1}UBS_3\Gamma_{-4}S_3BU (UBU)^{-1}+(UBU)^{-1}.
\end{multline*} 
Using \eqref{eq:UBUinverse}, decomposing $U=U_1+U_2$ as above, and using $S_3BU_2=\Gamma_4$, $S_3BU_1=\Gamma_3$,  we have 
$$
B(\lambda)^{-1}= \lambda^{-4}D_3 + S_2\Gamma_{-3} S_2+ S_2 \Gamma_{- 2} S_1+ S_1 \Gamma_{- 2} S_2  +  S_1  \Gamma_{- 1} S_1.
$$
Finally, using
$$ 
S_2(M  (\lambda)+S_1)^{-1} =(M  (\lambda)+S_1)^{-1}S_2 =  S_2    +\Gamma_2,
$$
$$ 
S_3(M  (\lambda)+S_1)^{-1} =(M  (\lambda)+S_1)^{-1}S_3 =  S_3    +\Gamma_3,
$$ 
$$
(M  (\lambda)+S_1)^{-1} = QD_0Q+\Gamma_1,
$$
we obtain Theorem~\ref{thm:Minvexp} in the case of a resonance of the third kind.

\section{Low energy dispersive estimates}\label{sec:low disp}

In this section we analyze the perturbed evolution $e^{-itH}$ in $L^1 \rar L^{\infty}$ setting for small energy, when the spectral variable $\lambda$  is in a small neighborhood of the threshold energy $\lambda=0$. As in the free case, we represent the solution via Stone's formula, \eqref{stone}. As usual, we analyze \eqref{stone} separately for large energy, when $\lambda \gtrsim 1$, and for small energy, when $ \lambda \ll 1$, see for example \cite{Sc2, eg2}.  The effect of the presence of zero energy resonances   is only felt in the small energy regime.  Different resonances change the asymptotic behavior of the perturbed resolvents and hence that of the spectral measure as $\lambda\to 0$ which we study in this section.  The large energy argument appears in Section~\ref{sec:large} to complete the proof of Theorem~\ref{thm:main}.

We start with the following lemma which will be used repeatedly.  
\begin{lemma} \label{lem:QvR} Assume that $v(x)\les\la x\ra^{-\frac52-},$ then
$$
\sup_y \big\| [Qv R^{\pm}(H_0, \lambda^4)](\cdot,y)\big\|_{L^2} \les 1,\,\,\text{ and }\,\, 
\sup_y \big\| \partial_\lambda [Qv R^{\pm}(H_0, \lambda^4)](\cdot,y)\big\|_{L^2} \les \frac1\lambda.
$$
Assuming that $v(x)\les\la x\ra^{-\frac72-},$ we have
$$
\sup_y \big\| \big[S_2v (R^{\pm}(H_0, \lambda^4)-G_0) \big](\cdot,y)\big\|_{L^2} \les \lambda,\,\, \,\, 
\sup_y \big\| \partial_\lambda \big[S_2v (R^{\pm}(H_0, \lambda^4)-G_0) \big](\cdot,y) \big\|_{L^2} \les 1,
$$
and  
$$
\sup_y \big\| \big[S_2v (R^{+}(H_0, \lambda^4) -R^{-}(H_0, \lambda^4) ) \big](\cdot,y)\big\|_{L^2} \les \lambda, $$
$$
\sup_y \big\| \partial_\lambda \big[S_2v  (R^{+}(H_0, \lambda^4) -R^{-}(H_0, \lambda^4) )\big](\cdot,y) \big\|_{L^2} \les 1.
$$
\end{lemma}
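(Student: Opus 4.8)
The proof rests on three ingredients: the explicit kernel formula \eqref{eq:R0lambda}, the Taylor expansion \eqref{eq:R0low} of the free resolvent near zero, and the orthogonality properties of $Q$ and $S_2$, namely $Qv=0$ (equivalently $Q$ kills constants paired against $v$) and $S_2 v = S_2 x_j v = 0$ for $j=1,2,3$. The first pair of estimates is essentially already available: since $R^{\pm}(H_0,\lambda^4)=O_1(\lambda^{-1})$ by \eqref{eq:freeO1}, we have pointwise $|R^{\pm}(H_0,\lambda^4)(x,y)| \lesssim \lambda^{-1}$ with the corresponding $\lambda^{-2}$ bound on the $\lambda$-derivative; but acting by $Qv$ on the left subtracts off the $\frac{a^{\pm}}{\lambda}$ term (which is constant in $x$, hence annihilated after pairing against $v$ and projecting with $Q$), leaving the kernel of $Qv(R^{\pm}(H_0,\lambda^4)-\frac{a^\pm}{\lambda})$, which by \eqref{eq:R0low} is $O_1(1)$ with weight $v(x)\langle x-y\rangle$. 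Then $\sup_y \|v(\cdot)\langle \cdot - y\rangle\|_{L^2(\mathbb R^3)} \lesssim 1$ provided $v(x)\lesssim \langle x\rangle^{-5/2-}$, which gives the first line; differentiating in $\lambda$ costs one power, consistent with the second claim. The subtlety is that $O_1$-type control was stated for $\lambda|x-y|<1$; for $\lambda|x-y|\gtrsim 1$ one argues directly from \eqref{eq:R0lambda} as in the proof of Lemma~\ref{lem:free bound}, splitting $e^{\pm i\lambda|x-y|}-1$ and $1-e^{-\lambda|x-y|}$ and using that the extracted $\frac{a^\pm}\lambda$ piece is killed by $Qv$.

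\textbf{The $S_2$ estimates.} For the second pair, write out $R^{\pm}(H_0,\lambda^4)-G_0$ using \eqref{eq:R0low}: for $\lambda|x-y|<1$ this equals $\frac{a^\pm}{\lambda}+a_1^\pm\lambda G_1 + O_1(\lambda^3|x-y|^4)$, and $G_1(x,y)=|x|^2-2x\cdot y+|y|^2$. Now apply $S_2v$ on the left. The term $\frac{a^\pm}{\lambda}v$ is killed since $S_2 v=0$. In the $\lambda G_1$ term, the pieces $|x|^2 v(x)$ and $x\cdot y\, v(x)$ are killed by $S_2$ (using $S_2 x_j v=0$), leaving only $\lambda |y|^2 (S_2 v)(x) = 0$ as well — so in fact the entire $\lambda G_1$ contribution vanishes after projecting with $S_2$. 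Hence $S_2 v(R^{\pm}(H_0,\lambda^4)-G_0)$ has kernel of size $\lesssim \lambda^3 v(x)|x-y|^4$ in the region $\lambda|x-y|<1$; this is $\lesssim \lambda\cdot v(x)|x-y|^2$ there (since $\lambda^2|x-y|^2<1$), and $\sup_y\|v(\cdot)|\cdot - y|^2\|_{L^2}\lesssim 1$ when $v(x)\lesssim\langle x\rangle^{-7/2-}$, giving the claimed $\lesssim\lambda$ bound. In the complementary region $\lambda|x-y|\gtrsim 1$ one again uses \eqref{eq:R0lambda} directly, pulls off the $\frac{a^\pm}\lambda+G_0+a_1^\pm\lambda G_1$ prefix (all killed or reduced by $S_2v$), and estimates the resulting bounded remainder against the decaying weight, picking up the factor $\lambda$ from $\lambda|x-y|\gtrsim 1$. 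The $\partial_\lambda$ bound follows identically with one fewer power of $\lambda$, since $\partial_\lambda$ acting on $O_1(\lambda^3\cdots)$ produces $O(\lambda^2\cdots)$.

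\textbf{The difference $R^+-R^-$.} For the last pair, note that in $R^+(H_0,\lambda^4)-R^-(H_0,\lambda^4)$ the $G_0$ and $a_1^\pm\lambda G_1$ terms do not cancel (since $a_1^+\ne a_1^-$, $G_0$ real but appearing with coefficient $1$ in both — actually $G_0$ cancels exactly, while the $\lambda G_1$ terms differ), but the point is that after applying $S_2 v$ these terms are annihilated anyway by the argument above, exactly as in the $S_2 v(R^\pm - G_0)$ case. So no genuine gain from the $+/-$ cancellation is needed here: one simply applies the previous bound to each of $R^+$ and $R^-$ separately, or directly to the difference via \eqref{eq:R0lambda}, observing that $e^{i\lambda|x-y|}-e^{-i\lambda|x-y|}$ and the difference of the extracted polynomial prefixes all either cancel under $S_2v$ or are of size $\lambda^3|x-y|^4$, which as before yields $\lesssim\lambda$ in $L^2_y$ under $v(x)\lesssim\langle x\rangle^{-7/2-}$. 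The main obstacle, and the only place requiring real care, is the bookkeeping in the region $\lambda|x-y|\gtrsim 1$: there the Taylor expansion \eqref{eq:R0low} is not literally the defining formula, so one must verify — as in Lemma~\ref{lem:free bound} — that the quantity $R^\pm(H_0,\lambda^4)-\frac{a^\pm}{\lambda}-G_0-a_1^\pm\lambda G_1$ is genuinely $O_1(\lambda|x-y|^2)$ there (not merely $O_1(\lambda^{-1})$), so that the three extracted terms, all of which vanish or reduce under $S_2 v$, actually carry the singular/growing behavior and the remainder is controlled by the fixed decaying weight $v(x)|x-y|^2$.
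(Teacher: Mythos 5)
Your proposal has a genuine gap, and it occurs at the step that carries the real content of the lemma: uniformity in $y$. After removing the singular term $\frac{a^\pm}{\lambda}$ (resp.\ $\frac{a^\pm}{\lambda}$, $G_0$, $a_1^\pm\lambda G_1$), you bound the remaining kernel by $v(x)\langle x-y\rangle$ (resp.\ $\lambda\, v(x)|x-y|^2$) and then assert $\sup_y\|v(\cdot)\langle\cdot-y\rangle\|_{L^2}\lesssim 1$ and $\sup_y\|v(\cdot)|\cdot-y|^2\|_{L^2}\lesssim 1$. These assertions are false: for $|y|$ large and $x$ in the region where $v$ lives, $\langle x-y\rangle\sim\langle y\rangle$, so these norms grow like $\langle y\rangle$ and $\langle y\rangle^2$ respectively, and your estimates are not uniform in $y$. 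The vanishing moments of $Qv$ and $S_2v$ cannot be spent only on deleting the explicit polynomial terms of the expansion \eqref{eq:R0low}; they must be applied to the \emph{full} kernel so as to convert growth in $|x-y|$ into growth in $|x|$ alone. That is precisely what the paper's proof does: writing the kernel as $\frac{1}{8\pi\lambda}F(\lambda|x-y|)$, the relation $Qv=0$ permits subtracting $F(\lambda|y|)$, and the fundamental theorem of calculus bounds the difference by $\sup|F'|\cdot\big||x-y|-|y|\big|\lesssim\langle x\rangle$, uniformly in $y$; in the $S_2$ case one must in addition subtract $\frac{x\cdot y}{|y|}F'(\lambda|y|)$ (allowed since $S_2x_jv=0$) and use $\big||x-y|^2-(|y|-\frac{x\cdot y}{|y|})^2\big|\le|x|^2$ together with $|F^{(k)}(p)|\lesssim p^{2-k}$, $k=1,2$, to get a bound $\lambda\langle x\rangle^2$ uniform in $y$ (equivalently, a second-order Taylor expansion of the kernel in the first variable about $x=0$ with integral remainder). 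Anchoring the expansion on the diagonal, as you do, leaves a remainder of size $\lambda v(x)|x-y|^2$, which inevitably loses $\langle y\rangle^2$.

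A secondary error: the $\lambda G_1$ contribution does not vanish entirely under $S_2v$. Writing $G_1(x,y)=|x|^2-2x\cdot y+|y|^2$, the terms $x\cdot y$ and $|y|^2$ are indeed killed by $S_2x_jv=0$ and $S_2v=0$, but $S_2[v\,|\cdot|^2]\neq 0$ in general; that vanishing is tied to $S_3$ (cf.\ Lemma~\ref{lem:S_3}). This slip happens to be harmless, since the surviving term is $O(\lambda)$ in $L^2$ uniformly in $y$, but it should be corrected. Finally, the same uniformity problem undercuts your treatment of $R^{+}-R^{-}$, since you reduce it to the flawed $S_2v(R^{\pm}-G_0)$ argument; once the $S_2$ estimate is proved by the cancellation mechanism above, the paper's observation that the identical bounds apply to the difference is what completes that case.
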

\begin{proof} We prove the assertion for $+$ sign. Recall the expansion \eqref{eq:R0low}. Using the fact $Qv=0$ we have 
\begin{multline*}
[Q vR^{+}(H_0, \lambda^4)] (y_2, y) = \frac{1}{8 \pi \lambda} \int_{\R^3} Q(y_2, y_1) v(y_1) [F(\lambda |y-y_1|) -F(\lambda|y|)]dy_1\\
=\frac{1}{8 \pi } \int_{\R^3} Q(y_2, y_1) v(y_1) \int_{|y|}^{|y-y_1|}F^\prime(\lambda s) ds dy_1,
\end{multline*}
 where 
 $$ F (p) = \frac{ e^{ip} - e^{-p}}{ p}.  $$ 
 Noting that $|F^\prime(p)|\les 1$, and using the absolute boundedness of $Q$, we obtain
 $$
\big\| [Q vR^{+}(H_0, \lambda^4)] (\cdot, y) \big\|_{L^2}\les \Big\| \int_{\R^3} |Q(y_2, y_1)| |v(y_1)|\la y_1\ra   dy_1 \Big \|_{L^2_{y_2}}\les \| v(y_1) \la y_1\ra \|_{L^2}\les 1,
 $$
 uniformly in $y$. 
 
 Now consider $S_2v (R^{+}(H_0, \lambda^4)-G_0)$. We have
 \begin{align*}
[S_2 v(R^{+}(H_0, \lambda^4)-G_0)] (y_2, y)= \frac{1}{8 \pi \lambda} \int_{\R^3} S_2(y_2, y_1) v(y_1) F(\lambda |y-y_1|) dy_1
\end{align*}
 where 
 $$ F (p) = \frac{ e^{ ip} - e^{-p}}{ p}  + p.$$ 
 Noting  that $S_2v =0$ we can rewrite the integral above as 
\begin{multline*}
\frac{1}{8 \pi \lambda} \int_{\R^3} S_2 (y_2,y_1) v(y_1) [F(\lambda |y-y_1|) - F(\lambda |y| ) ]dy_1\\
=\frac{1}{8 \pi }   \int_{\R^3} S_2(y_2,y_1) v(y_1) \int_{|y|}^{| y-y_1|} F^{\prime} (\lambda s ) ds dy_1.
\end{multline*}

Furthermore, one has $ S_2y_jv =0$,   and hence 
$$  \int_{\R^3}  S_2 (y_2,y_1) v(y_1) \frac{y_1 \cdot y}{|y|}  F^{\prime} (\lambda |y| ) dy_1= \int_{\R^3}  S_2 (y_2,y_1) v(y_1)   y_1 dy_1  \cdot \frac{ y }{|y|} F^{\prime} (\lambda |y| )  =0. $$

This gives 
\begin{multline} 
   \int_{\R^3}  S_2 (y_2,y_1)  v(y_1)  \int_{|y|}^{| y-y_1|}  F^{\prime} (\lambda s ) ds dy_1   \\
  = \int_{\R^3}  S_2 (y_2,y_1) v(y_1)  \Big[\int_{|y|}^{| y-y_1|} F^{\prime} (\lambda s ) ds  + \frac{ y_1 \cdot y }{|y|} F^{\prime} (\lambda |y| )   \Big] dy_1    \\
 = \int_{\R^3}  S_2 (y_2,y_1) v(y_1)   \Bigg[ \int_{|y|- \frac{ y_1 \cdot y}{|y|}}^{| y-y_1|} F^{\prime} (\lambda s ) ds - \int_{|y|- \frac{ y_1 \cdot y}{|y|}}^{|y|} F^{\prime} (\lambda s ) ds+ \int_{|y| - \frac{ y_1 \cdot y}{|y|}}^{ |y| } F^{\prime} (\lambda |y| ) ds \Bigg] dy_1   \\ \label{seperates}
 = \int_{\R^3}  S_2 (y_2,y_1) v(y_1)   \Bigg[\int_{|y|- \frac{ y_1 \cdot y}{|y|}}^{| y-y_1|} F^{\prime} (\lambda s ) ds +  \lambda \int_{|y| - \frac{y_1 \cdot y}{|y|}}^{|y| } \int_{s}^{|y|} F^{\prime \prime} ( \lambda k) dk ds \Bigg] dy_1. 
\end{multline} 
To control the integrals in \eqref{seperates} notice that $| F^{(k)} (p) | \les p^{2-k} $ for $k= 1,2$. Therefore, for $|y| - \Big| \frac{y_1 \cdot y}{|y|} \Big| \geq 0$, we obtain
\begin{align} \label{Ffirstests}
\Big| \int_{|y| -\frac{y_1 \cdot y}{|y|}}^{|y-y_1|} F^{\prime} (\lambda s ) ds \Big| \les \lambda \Big|\int_{|y| -\frac{y_1 \cdot y}{|y|}}^{|y-y_1|} s ds\Big|   \les \lambda \Big| |y-y_1|^2 - (|y| - \frac{y_1 \cdot y}{|y|})^2 \Big| \les \lambda \la y_1 \ra^{2} .
\end{align}
Note that if $|y| - \Big| \frac{y_1 \cdot y}{|y|} \Big| < 0$, one has  $ |y| ,|y-y_1| < |y_1|$ and therefore the above inequality is trivial. 

For the second term in \eqref{seperates}, we have
\begin{align} \label{Fsecondests}
\int_{|y|-\frac{y_1 \cdot y}{|y|}}^{|y| } \int_{s}^{|y|} F^{\prime \prime} ( \lambda k) dk ds = \int_{|y|-\frac{y_1 \cdot y}{|y|}}^{|y| } [ k- |y| + \frac{y_1 \cdot y}{|y|} ] F^{\prime \prime} ( \lambda k) dk. 
\end{align}
Noting that $|[ k- |y| + \frac{y_1 \cdot y}{|y|} ]| \les \la y_1 \ra$ and $| F^{\prime \prime}(\lambda k) | \les 1$. This term can be controlled by $ \la y_1 \ra^2$. 
Finally, by  \eqref{Ffirstests} and \eqref{Fsecondests}, we obtain
 \begin{multline*}
	\big\| [S_2v (R^{+}(H_0, \lambda^4)-G_0)] (\cdot, y) \big\|_{L^2}\\
	\les \lambda \Big\| \int_{\R^3} |S_2(y_2, y_1)| |v(y_1)|\la y_1\ra^2   dy_1 \Big \|_{L^2_{y_2}}
	\les \lambda \| v(y_1) \la y_1\ra^2 \|_{L^2}\les \lambda, 
 \end{multline*}
 uniformly in $y$. 
 
To establish the bound on the first derivative, note that 
$$
\partial_{\lambda} F(\lambda r) = \frac{1}{\lambda} \Big[\frac{ [i(\lambda r)-1] e^{i(\lambda r)} + e^{-(\lambda r)} [(\lambda r)+1]}{(\lambda r)} - (\lambda r) \Big] =:\frac{1}{\lambda} \tilde{F}(\lambda r) 
$$
Since one has $| \tilde{F}^{k} (p) | \les p^{2-k}$, one can apply the  same method to $\tilde{F}$ to finish the proof.
 
 The last assertion follows from noting that the bounds used on $S_2v (R^{\pm}(H_0, \lambda^4)-G_0)$ also apply to $S_2v (R^{+}(H_0, \lambda^4)-R^{-}(H_0, \lambda^4))$, see \eqref{eq:R0low} and the subsequent discussion.
\end{proof}

We first consider the case when zero is regular ($S_1=0$) or when there is a resonance of the first kind $S_1\neq 0, S_2=0$. 
\begin{theorem}\label{thm:firstkind} Assume that $v(x)\les \la x\ra^{-\frac52-}$ and $S_1=0$, or that   $v(x)\les \la x\ra^{-\frac{7}2-}$ and  $S_1\neq 0, S_2=0$. Then 
\begin{align}\label{eq:first low int}
	\sup_{x,y\in \mathbb R^3}\Bigg|\int_0^\infty e^{it\lambda^4} \lambda^3 \chi(\lambda) R_V^\pm (\lambda^4)(x,y) \, d\lambda \Bigg|\les \la t\ra^{-\frac34}.
\end{align}
\end{theorem}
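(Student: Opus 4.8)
The plan is to feed the symmetric resolvent identity \eqref{resid} into the oscillatory integral and split
\begin{multline*}
\int_0^\infty e^{it\lambda^4}\lambda^3\chi(\lambda)R_V^\pm(\lambda^4)(x,y)\,d\lambda \\ = \int_0^\infty e^{it\lambda^4}\lambda^3\chi(\lambda)R^\pm(H_0,\lambda^4)(x,y)\,d\lambda - \int_0^\infty e^{it\lambda^4}\lambda^3\chi(\lambda)\,\mathcal E(\lambda)(x,y)\,d\lambda,
\end{multline*}
where $\mathcal E(\lambda):=R^\pm(H_0,\lambda^4)v\,(M^\pm(\lambda))^{-1}\,vR^\pm(H_0,\lambda^4)$. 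The free term is precisely Lemma~\ref{lem:free bound}, which already supplies $\la t\ra^{-\f34}$ (and uses no cancellation between $R^+$ and $R^-$, matching the fact that the theorem is stated for each sign separately). For the second term, the goal is to show that the kernel of $\mathcal E(\lambda)$ obeys $\mathcal E(\lambda)=O_1(\lambda^{-1})$ uniformly for $x,y\in\R^3$ on the support of $\chi$; granting this, Lemma~\ref{lem:t-34bound} with $\alpha=1$ gives $\la t\ra^{-1+\f14}=\la t\ra^{-\f34}$ and finishes the proof. (The integral converges absolutely near $\lambda=0$ since the integrand is $O(\lambda^2)$, and $\chi$ restricts us to $\lambda\ll1$ where the expansions of Theorem~\ref{thm:Minvexp} hold.)

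To establish $\mathcal E(\lambda)=O_1(\lambda^{-1})$ I would substitute the relevant expansion of $(M^\pm(\lambda))^{-1}$ from Theorem~\ref{thm:Minvexp}: $Q\Gamma_0Q+\Gamma_1$ when zero is regular, and $Q\Gamma_{-1}Q+Q\Gamma_0+\Gamma_0Q+\Gamma_1$ for a resonance of the first kind, and estimate each resulting term $R^\pm(H_0,\lambda^4)v\,(\cdot)\,vR^\pm(H_0,\lambda^4)$ as a composition of operators with the interior $\Gamma_\theta$ absolutely bounded. Two elementary facts about the outer factors do all the work. First, by \eqref{eq:freeO1}, $\sup_{z,y}|R^\pm(H_0,\lambda^4)(z,y)|\les\lambda^{-1}$ and $\sup_{z,y}|\partial_\lambda R^\pm(H_0,\lambda^4)(z,y)|\les\lambda^{-2}$, hence (as $v\in L^2$) $\sup_y\|vR^\pm(H_0,\lambda^4)(\cdot,y)\|_{L^2}\les\lambda^{-1}$ with $\lambda$-derivative $\les\lambda^{-2}$. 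Second, whenever a factor $Q$ is adjacent to such a free-resolvent factor, Lemma~\ref{lem:QvR} upgrades this to $\sup_y\|[QvR^\pm(H_0,\lambda^4)](\cdot,y)\|_{L^2}\les1$ with $\lambda$-derivative $\les\lambda^{-1}$; and since the free kernel and $Q$ are symmetric, $[R^\pm(H_0,\lambda^4)vQ](x,\cdot)=[QvR^\pm(H_0,\lambda^4)](\cdot,x)$, so the same bound applies when $Q$ sits to the right of an $R^\pm(H_0,\lambda^4)v$ factor. Using the absolute boundedness bound $\||\Gamma_\theta|\|_{L^2\to L^2}+\lambda\||\partial_\lambda\Gamma_\theta|\|_{L^2\to L^2}\les\lambda^\theta$ from \eqref{eq:Gamma def}, each term is then, pointwise in $(x,y)$, a product of three factors: $Q\Gamma_{-1}Q$ gives $\|[R^\pm vQ](x,\cdot)\|_{L^2}\cdot\lambda^{-1}\cdot\|[QvR^\pm](\cdot,y)\|_{L^2}\les 1\cdot\lambda^{-1}\cdot1=\lambda^{-1}$; $\Gamma_1$ gives $\lambda^{-1}\cdot\lambda\cdot\lambda^{-1}=\lambda^{-1}$; the mixed $Q\Gamma_0$ and $\Gamma_0Q$ give $1\cdot1\cdot\lambda^{-1}=\lambda^{-1}$; and the regular $Q\Gamma_0Q$ gives $1\cdot1\cdot1=1\les\lambda^{-1}$. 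The $\lambda$-derivative is controlled the same way via the Leibniz rule, each differentiation costing one extra power $\lambda^{-1}$, which produces $\les\lambda^{-2}$ for every term. Summing, $\mathcal E(\lambda)=O_1(\lambda^{-1})$.

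The decay hypotheses line up exactly: Lemma~\ref{lem:QvR} and the regular-case expansion of Theorem~\ref{thm:Minvexp} require $v(x)\les\la x\ra^{-\f52-}$, while the first-kind expansion requires $v(x)\les\la x\ra^{-\f72-}$, which are the stated assumptions. I do not expect a genuine obstacle here; the only point requiring care is the bookkeeping of where each $Q$ sits. Indeed, it is precisely a $Q$ adjacent to a free-resolvent factor that converts the $\lambda^{-1}$ singularity of $R^\pm(H_0,\lambda^4)$ into $O(1)$ via Lemma~\ref{lem:QvR}, and this is what keeps even the most singular piece $Q\Gamma_{-1}Q$ of the first-kind expansion at the harmless level $O_1(\lambda^{-1})$ rather than the useless $O_1(\lambda^{-3})$ one would get by brute force.
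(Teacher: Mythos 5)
Your proposal is correct and follows essentially the same route as the paper: split off the free term via \eqref{resid} and Lemma~\ref{lem:free bound}, insert the expansions of Theorem~\ref{thm:Minvexp}, and show $\mathcal E(\lambda)=O_1(\lambda^{-1})$ by pairing \eqref{eq:freeO1L2} with Lemma~\ref{lem:QvR} wherever a $Q$ abuts a free resolvent, then conclude with Lemma~\ref{lem:t-34bound} at $\alpha=1$. The term-by-term bookkeeping ($Q\Gamma_{-1}Q$, $Q\Gamma_0$, $\Gamma_0Q$, $\Gamma_1$) matches the paper's argument exactly.
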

\begin{proof}Recall \eqref{resid}:
$$R^{\pm}_V(\lambda^4)= R^{\pm}(H_0, \lambda^4) - R^{\pm}(H_0, \lambda^4)v (M^{\pm} (\lambda))^{-1} vR^{\pm}(H_0, \lambda^4).
$$ 
We already obtained the required bound for the free term in Lemma~\ref{lem:free bound}. For the correction term, dropping the $\pm$ signs, the claim will follow from Lemma~\ref{lem:t-34bound} with 
\be\label{eq:mEregular} 
\mE(\lambda)(x,y)=\big[ R (H_0, \lambda^4) v  (M (\lambda))^{-1}  v R (H_0, \lambda^4)\big](x,y).
\ee 
By Theorem~\ref{thm:Minvexp}, in the regular case  we have
$ M (\lambda)^{-1}= QD_0Q+\Gamma_1.$
In the case of a resonance of the first kind,   we have
$$
M (\lambda)^{-1}=    Q\Gamma_{-1}Q+ Q\Gamma_0+\Gamma_0Q+\Gamma_1.
$$
First consider the contribution of $\Gamma_1$ to \eqref{eq:mEregular}:
$$
\big[ R (H_0, \lambda^4) v \Gamma_1 v R (H_0, \lambda^4)\big](x,y).
$$
Note that, by \eqref{eq:freeO1}  we have 
\be\label{eq:freeO1L2} \|  v R (H_0, \lambda^4)(\cdot, y)\|_{L^2}\les \frac1\lambda,\,\,\, \| \partial_\lambda v R (H_0, \lambda^4)(\cdot, y)\|_{L^2}\les \frac1{\lambda^2}
\ee 
uniformly in $y$. Therefore we estimate the contribution of the error term to $\mE(\lambda)(x,y)$ by 
$$
\lambda \|  v R (H_0, \lambda^4)(\cdot, x)\|_{L^2}\|  v R (H_0, \lambda^4)(\cdot, y)\|_{L^2}\les \frac1\lambda,
$$
and its $\lambda$ derivative by $\frac1{\lambda^2}$.  Hence, the claim follows from Lemma~\ref{lem:t-34bound} with $\alpha=1$. 

Now, consider the contribution of $ Q\Gamma_{-1}Q$ to \eqref{eq:mEregular}:
$$
 \big[ R (H_0, \lambda^4) v Q\Gamma_{-1} Q v R (H_0, \lambda^4)\big](x,y).   
$$ 
Note that, by Lemma~\ref{lem:QvR}, we bound this term by 
$$
  \|Q v R (H_0, \lambda^4)(\cdot, y)\|_{L^2} \|Q v R (H_0, \lambda^4)(\cdot, x)\|_{L^2} \| |\Gamma_{-1}| \|_{L^2\to L^2} \les \frac1\lambda 
$$ 
uniformly in $x,y$. 
Similarly, its $\lambda$-derivative  is bounded by $\frac1{\lambda^2}$. Therefore, the claim follows from Lemma~\ref{lem:t-34bound}. 

The contributions of $Q\Gamma_0$ and $\Gamma_0Q$ can be bounded similarly by using Lemma~\ref{lem:QvR} on one side and   \eqref{eq:freeO1L2}  on the other side.
\end{proof}

\begin{theorem}\label{thm:second/thirdkind} Assume that $v(x)\les \la x\ra^{-\frac{11}2-}$.
If $S_2 \neq 0$, $S_3=0$ then
\begin{align} \label{eq:secondkind}
	\sup_{x,y\in \mathbb R^3}\Bigg|\int_0^\infty e^{it\lambda^4} \lambda^3 \chi(\lambda) R_V^\pm (\lambda^4)(x,y) \, d\lambda   -F^{\pm}(x,y)\Bigg|\les \la t\ra^{-\frac34}.
\end{align}
Here   $F^\pm$ are time dependent finite rank operators satisfying  $\|F^{\pm}\|_{L^1\to L^\infty}\les \la t\ra^{-\frac14}$. 

Moreover if $v(x)\les \la x\ra^{-\frac{15}2-}$ and $S_3\neq 0$, then
\begin{align}\label{eq:thirdkind}
	\sup_{x,y\in \mathbb R^3}\Bigg|\int_0^\infty e^{it\lambda^4} \lambda^3 \chi(\lambda) [R_V^{+} (\lambda^4) - R_V^{-} (x,y)] \, d\lambda -G(x,y)\Bigg|\les \la t\ra^{-\frac12},
\end{align} 
where   $G$ is a  time dependent finite rank operator  satisfying  $\|G  \|_{L^1\to L^\infty}\les \la t\ra^{-\frac14}$.
 \end{theorem}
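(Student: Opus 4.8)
The plan is to insert the symmetric resolvent identity \eqref{resid} into Stone's formula \eqref{stone}, localize to low energies with the cutoff $\chi$, and substitute the expansions of $[M^\pm(\lambda)]^{-1}$ from Theorem~\ref{thm:Minvexp}. The free term is already controlled by Lemma~\ref{lem:free bound}. Every summand of $[M^\pm(\lambda)]^{-1}$ flanked on both sides by $Q$ --- the pieces $Q\Gamma_{-1}Q$, $Q\Gamma_0$, $\Gamma_0 Q$, $\Gamma_1$ --- is treated exactly as in the proof of Theorem~\ref{thm:firstkind}: one gains a power of $\lambda$ on each $Q$-side from Lemma~\ref{lem:QvR}, uses the crude bound \eqref{eq:freeO1L2} otherwise, finds that the kernel of the corresponding $\mathcal E(\lambda)$ is $O_1(\lambda^{-1})$, and applies Lemma~\ref{lem:t-34bound} with $\alpha=1$ to get $\langle t\rangle^{-3/4}$. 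Thus the whole difficulty is concentrated in the new, more singular summands: $S_2\Gamma_{-3}S_2$ (together with the mixed $S_2\Gamma_{-2}Q$, $S_2\Gamma_{-1}$, etc.) for a resonance of the second kind, and additionally $\lambda^{-4}S_3D_3S_3$ for the third kind.

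For the second kind I would fix an orthonormal basis $\{\phi_j\}$ of the finite-dimensional space $S_2L^2$ and write, for the contribution of $S_2\Gamma_{-3}S_2$,
\[
\mathcal E(\lambda)(x,y)=\sum_{j,k}\langle\phi_j,\Gamma_{-3}\phi_k\rangle\,\psi_j(x,\lambda)\,\psi_k(y,\lambda),\qquad \psi_j(x,\lambda):=\langle\phi_j v,\,R^\pm(H_0,\lambda^4)(\cdot,x)\rangle,
\]
so a finite-rank kernel in $(x,y)$ is visibly present. The key structural input is that $\phi_j\in S_2L^2$ forces $\langle\phi_j,v\rangle=\langle\phi_j,x_\ell v\rangle=0$, so one may subtract the degree-$\le1$ Taylor polynomial (in the integration variable, at the origin) from $R^\pm(H_0,\lambda^4)$ inside $\psi_j$ at no cost; the pointwise Hessian bound $|\nabla_z^2R^\pm(H_0,\lambda^4)(z,x)|\les|z-x|^{-1}$ then yields $\psi_j(x,\lambda)=O_1(1)$ uniformly in $x$, and in particular its $\lambda$-independent part $\psi_j^{(0)}(x)=\langle\phi_j v,G_0(\cdot,x)\rangle$ is bounded. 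I would extract from $\mathcal E(\lambda)$ the piece built from the $\psi_j^{(0)}$'s and the $\lambda$-independent leading coefficients of $\Gamma_{-3},\Gamma_{-2},\dots$ --- a time-dependent finite-rank operator $F^\pm_t$ --- and check via Lemma~\ref{lem:t-34bound} with $\alpha=3$ that $\|F^\pm_t\|_{L^1\to L^\infty}\les\langle t\rangle^{-1/4}$. For the remainder one must verify that the apparently polynomially-growing contributions --- those coming from the $a_3^\pm\lambda^3G_3$ and higher terms of the free resolvent paired against $S_2v$ --- in fact collapse: after the moment relations one has the closed form $T_2=\tfrac{16}{3}\,|\eta_0\rangle\langle\eta_0|+4\sum_{\ell,m}|\mu_{\ell m}\rangle\langle\mu_{\ell m}|$, with $\eta_0=S_2(|x|^2v)$ and $\mu_{\ell m}=S_2(x_\ell x_m v)$, and the growing pieces lie in $\operatorname{ran} T_2$, so composing them with $\Gamma_{-3}\sim\lambda^{-3}(S_2T_2S_2)^{-1}$ produces only bounded, finite-rank operators; what survives is $O_1(\lambda^{-1})$ and contributes $\langle t\rangle^{-3/4}$.

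For the third kind the only genuinely new term is $\lambda^{-4}S_3D_3S_3$, and here it is essential to work with the difference $R_V^+-R_V^-$ (the unique place in the paper where this is used): since $T_3=S_3vG_4vS_3$ with $G_4$ real, $D_3$ is real, so $D_3^+=D_3^-$ and the $\lambda^{-4}$-singular part of $(M^+(\lambda))^{-1}-(M^-(\lambda))^{-1}$ cancels. What remains is driven by $\delta R:=R^+(H_0,\lambda^4)-R^-(H_0,\lambda^4)$, which contains only odd powers of $\lambda$, and in particular by $S_3v\,\delta R$: the stronger relations $\langle\phi,v\rangle=\langle\phi,x_\ell v\rangle=\langle\phi,x_\ell x_m v\rangle=0$ on $S_3L^2$ annihilate its $\lambda^{-1}$ and $\lambda$ terms, so $S_3v\,\delta R=O(\lambda^3\langle y\rangle)$. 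Tracking these gains through the $\lambda^{-4}$ prefactor, the worst leftover kernel is $O_1(\lambda^{-2})$ once a finite-rank operator $G_t$ --- again built from the $S_3vG_0$ obstruction and the leading $\lambda$-profile, made bounded via the $S_3$-moment relations --- is subtracted, giving $\langle t\rangle^{-1/2}$ by Lemma~\ref{lem:t-34bound} with $\alpha=2$; the polynomially-growing remainders that do occur carry exactly the $\langle x\rangle^{5/2}$-type weights needed for the $L^{1,5/2}\to L^{\infty,-5/2}$ improvement, which recovers $O_1(\lambda^{-1})$ and hence $\langle t\rangle^{-3/4}$.

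The step I expect to be the main obstacle is precisely this bookkeeping of cross-terms and the verification that $F^\pm_t$ and $G_t$ are genuinely finite-rank with uniformly bounded kernels while every leftover is $O_1(\lambda^{-1})$ (respectively $O_1(\lambda^{-2})$ unweighted / $O_1(\lambda^{-1})$ weighted). Both points rely delicately on the moment relations for $S_2L^2$ and $S_3L^2$ together with the explicit low-rank forms of $T_2$ and $T_3$, which are what force the would-be polynomially-growing contributions to cancel, and --- for the most singular $\lambda^{-4}$ term in the third-kind case --- on the $+/-$ cancellation in the spectral measure rather than merely on pointwise bounds for $R_V^\pm$.
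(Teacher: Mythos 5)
Your skeleton agrees with the paper's: reduce via Theorem~\ref{thm:Minvexp} to the most singular terms, split $R^\pm(H_0,\lambda^4)=G_0+(R^\pm(H_0,\lambda^4)-G_0)$, place every piece carrying at least one $G_0$ factor (finite rank because $S_2$, $S_3$ are) into $F^\pm$, resp.\ $G$, and use the $+/-$ cancellation together with the sign-independence of $D_3$ so that the $\lambda^{-4}$ term always appears flanked by $R^+-R^-$. But the load-bearing technical step is not carried out correctly. What makes the leftover terms small is the \emph{uniform, weight-free} gain $\sup_y\|S_2v(R^\pm(H_0,\lambda^4)-G_0)(\cdot,y)\|_{L^2}\les\lambda$ (with $\lambda$-derivative $\les 1$), and the same bound with $R^+-R^-$ in place of $R^\pm-G_0$; these are precisely the second and third assertions of Lemma~\ref{lem:QvR}, proved there by an exact first-order cancellation argument (the function $F$ with $|F^{(k)}(p)|\les p^{2-k}$), not by Taylor-expanding the free resolvent through $a_3^\pm\lambda^3 G_3$. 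You instead try to generate the gain by that higher-order expansion, correctly observe that it produces terms growing like $|x|^2$ in the external variable, and then assert they disappear because they ``lie in the range of $T_2$'' and are absorbed by $\Gamma_{-3}\sim\lambda^{-3}(S_2T_2S_2)^{-1}$. That mechanism cannot work: $(S_2T_2S_2)^{-1}$ acts on the internal, finite-dimensional $S_2L^2$ variable and cannot cancel growth in $x$ or $y$; the growth is an artifact of using the small-argument expansion in the regime $\lambda|x-z|\gtrsim 1$, and the cure is the uniform bound of Lemma~\ref{lem:QvR}, which you cite at the outset but never invoke where it is actually needed.

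Two consequences of this gap. First, your finite-rank subtraction is too small as described: if $F^\pm_t$ retains only the $\psi^{(0)}_j\otimes\psi^{(0)}_k$ piece with $\lambda$-independent leading coefficients, the remainder still contains cross terms $\psi^{(0)}_j(x)\,O_1(\lambda^{-3})\,O_1(\lambda)$ and coefficient corrections $\psi^{(0)}\,O_1(\lambda^{-2})\,\psi^{(0)}$, which are only $O_1(\lambda^{-2})$ and give $\la t\ra^{-\frac12}$, not the claimed $O_1(\lambda^{-1})$ and $\la t\ra^{-\frac34}$. The paper's $F^\pm$ consists of \emph{all three} terms containing a $G_0$ factor with their full $\lambda$-dependent coefficients (sizes $O_1(\lambda^{-3})$ and $O_1(\lambda^{-2})$, hence $\la t\ra^{-\frac14}$), leaving only $(R^\pm-G_0)vS_2\Gamma_{-3}S_2v(R^\pm-G_0)=O_1(\lambda^{-1})$ outside. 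Second, in the third-kind case your bound $S_3v(R^+-R^-)=O(\lambda^3\la y\ra)$ is a weighted bound (and the admissible weight is in fact larger, compare the $O_1(\lambda^{\frac32}\la y\ra^{\frac52})$ used in Corollary~\ref{cor:wtd decay}), so it cannot yield the uniform-in-$x,y$ estimate \eqref{eq:thirdkind}; the unweighted statement only needs, and uses, the $O(\lambda)$ gain of Lemma~\ref{lem:QvR} on each side of $D_3/\lambda^4$, giving $O_1(\lambda^{-3})$ for the finite-rank $G$ and $O_1(\lambda^{-2})$ for the remainder, while the extra $\lambda$-gain at the cost of weights belongs to the separate weighted corollary rather than to this theorem.
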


\begin{proof} 

We first prove \eqref{eq:secondkind}. By Theorem~\ref{thm:Minvexp}, in the case of a resonance of the second kind,  we have
$$[M^\pm(\lambda) ]^{-1} =  S_2\Gamma_{-3} S_2+S_2\Gamma_{-2} Q+ Q\Gamma_{-2}S_2 + S_2\Gamma_{-1}+\Gamma_{-1}S_2 +Q  \Gamma_{-1}Q +Q  \Gamma_0+  \Gamma_0Q +\Gamma_1. $$
We only consider the contribution of $S_2\Gamma_{-3} S_2$ to \eqref{resid}, the others can be handled similarly.
Let 
$$
\mathcal E(\lambda,x,y)=\big[R ^{\pm}(H_0, \lambda^4) v S_2\Gamma_{-3} S_2 v R^{\pm} (H_0, \lambda^4)\big](x,y)
$$   
Note that by Lemma~\ref{lem:QvR} we have 
\begin{multline*}
\mathcal E =G_0v S_2\Gamma_{-3} S_2 vG_0+G_0v S_2\Gamma_{-3} S_2 v(R^{\pm} (H_0, \lambda^4)-G_0)\\+(R^{\pm} (H_0, \lambda^4)-G_0)v S_2\Gamma_{-3} S_2 vG_0+O_1(\lambda^{-1}).
\end{multline*}
By Lemma~\ref{lem:t-34bound}, the contribution of the last term is $\les \la t\ra^{-\frac34 }$. 
Moreover, noting that $S_2v=0$, we have 
\begin{align*}
\big\|[S_2v G_0] (\cdot,y)\big\|_{L^2} =\Big\| \int_{\R^3} S_2(\cdot,y_1) v(y_1) [|y-y_1| - |y|] dy_1\Big\|_{L^2}\les 1,
\end{align*} 
since  $|[|y-y_1| - |y|] | \les \la y_1 \ra$.  Therefore, the first term is $O_1(\lambda^{-3})$, and by Lemma~\ref{lem:t-34bound} its contribution is $\les \la t\ra^{-\frac14}$. Also note that its contribution is finite rank since $S_2$ is. Similarly the contributions of second and third terms are $\les \la t\ra^{-\frac12}$, and finite rank.  One can explicitly construct the operators $F^\pm(x,y)$ from the contribution of these operators to the Stone formula, \eqref{stone}.

Next we prove \eqref{eq:thirdkind}.  
Note that all the term in $M(\lambda) ^{-1}$ in Theorem~\ref{thm:Minvexp} except  $\lambda^{-4}D_3$ are similar to the terms in the $M^{-1}(\lambda)$ that we considered in the case of resonance of the second kind. Therefore, we only control the terms interacting with $D_3$, that is we need to control the contribution of the following term to the Stone's formula, 
\begin{multline*}
 [R^{+}(H_0, \lambda^4) - R^{-}(H_0, \lambda^4)] v \frac{D_3}{\lambda^4} v R^{+}(H_0, \lambda^4)\\
= [R^{+}(H_0, \lambda^4) - R^{-}(H_0, \lambda^4)] v \frac{D_3}{\lambda^4}v G_0 
 \\+ [R^{+}(H_0, \lambda^4) - R^{-}(H_0, \lambda^4)] v \frac{D_3}{\lambda^4} v [R^{+}(H_0, \lambda^4) -G_0].
\end{multline*} 
Using Lemma~\ref{lem:QvR}, the first term is $O_1(\lambda^{-3})$, and hence  its contribution  to Stone's formula is $\la t\ra^{-\frac14}$ by Lemma~\ref{lem:t-34bound}, and is finite rank. 
Similarly, the second term is $O_1(\lambda^{-2})$ and its contribution is $ \les \la t\ra^{-\frac12}$.  $G(x,y)$ is obtained explicitly by inserting these operators in \eqref{stone}.
\end{proof}

We note that the time decay of the non-finite rank portion of the evolution when $S_3\neq0$ can be improved at the cost of spatial weights.  

\begin{corollary}\label{cor:wtd decay}
	
	If $v(x)\les \la x\ra^{-\frac{15}2-}$ and $S_3\neq 0$, then
	\begin{align}\label{eq:thirdkind2}
	\Bigg|\int_0^\infty e^{it\lambda^4} \lambda^3 \chi(\lambda) [R_V^{+} (\lambda^4) - R_V^{-} (x,y)] \, d\lambda -G(x,y)\Bigg|\les \la t\ra^{-\frac34}\la x\ra^{\f52}\la y \ra^{\f52},
	\end{align} 
	where   $G$ is a  time dependent finite rank operator  satisfying  $\|G  \|_{L^1\to L^\infty}\les \la t\ra^{-\frac14}$.
	
\end{corollary}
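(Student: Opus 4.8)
The plan is to revisit the proof of Theorem~\ref{thm:second/thirdkind} in the case of a resonance of the third kind and to isolate the single contribution to the Stone integral responsible for the $\la t\ra^{-\frac12}$ rate. Recall from that proof that the finite-rank operator $G$ collects all of the finite-rank contributions; these are precisely the ones in which one of the two outer resolvent factors has been replaced by the $\lambda$-independent operator $G_0$, so that after integration in $\lambda$ the range lies in a fixed finite-dimensional space. Every remaining contribution is either $O_1(\lambda^{-1})$, hence already $\les\la t\ra^{-\frac34}$ by Lemma~\ref{lem:t-34bound}, or equals
$$
\mathcal E(\lambda)(x,y):=\Big[\big(R^+(H_0,\lambda^4)-R^-(H_0,\lambda^4)\big)\,v\,\tfrac{D_3}{\lambda^4}\,v\,\big(R^+(H_0,\lambda^4)-G_0\big)\Big](x,y)
$$
or its transpose (obtained by interchanging the two outer factors), which were shown to be $O_1(\lambda^{-2})$ and therefore to contribute only $\les\la t\ra^{-\frac12}$. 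Thus it suffices to show that the Stone contribution of $\mathcal E(\lambda)$ and of its transpose is $\les\la t\ra^{-\frac34}\la x\ra^{\frac52}\la y\ra^{\frac52}$, everything else going into the same $G$.

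The improvement is a weighted refinement of Lemma~\ref{lem:QvR} exploiting the additional orthogonality $S_3 x_i x_j v=0$ from Lemma~\ref{lem:S_3}, which holds only at the level of $S_3$. I would prove
$$
\sup_{y}\la y\ra^{-\frac52}\big\|\big[S_3 v\big(R^\pm(H_0,\lambda^4)-G_0\big)\big](\cdot,y)\big\|_{L^2}\les\lambda^2,\qquad \sup_{y}\la y\ra^{-\frac52}\big\|\partial_\lambda\big[S_3 v\big(R^\pm(H_0,\lambda^4)-G_0\big)\big](\cdot,y)\big\|_{L^2}\les\lambda,
$$
together with the same bounds with $R^+(H_0,\lambda^4)-R^-(H_0,\lambda^4)$ in place of $R^\pm(H_0,\lambda^4)-G_0$. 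The argument runs as in Lemma~\ref{lem:QvR}: write the kernel as $\frac1{8\pi}\int S_3(y_2,y_1)v(y_1)[\cdots]\,dy_1$ with the relevant profile $F(\lambda|y-y_1|)$, use $S_3 v=S_3 x_j v=0$ to subtract the degree-$\le1$ Taylor polynomial of that profile in $y_1$ exactly as there, and then use $S_3 x_i x_j v=0$ to subtract also the degree-two term, which gains one further factor of $\lambda|y_1|$. The pointwise bounds on $F^{(k)}$, $k\le3$ (for the profile of $R^+-R^-$ bounded and decaying, for that of $R^\pm-G_0$ growing at most linearly with its odd-order derivatives vanishing at the origin) then yield the power $\lambda^2$. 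The weight $\la y\ra^{\frac52}$ absorbs the polynomial growth in $|y|$ produced by the Taylor remainder --- organised, as in Lemma~\ref{lem:QvR}, around $|y|-\frac{y_1\cdot y}{|y|}$ so as to keep the $1/|y|$ factors under control --- and also the estimate in the region $\lambda|y-y_1|\gtrsim1$, where no series expansion is available and one instead subtracts the degree-$\le2$ Taylor polynomial in $y_1$ of $R^\pm(H_0,\lambda^4)(y_1,y)$ itself and uses $|R^\pm(H_0,\lambda^4)|\les\lambda^{-1}$ together with elementary pointwise bounds on the $y_1$-derivatives of the free kernel. Since $v(x)\les\la x\ra^{-\frac{15}2-}$, the functions $v\la\cdot\ra^m$ with $m\le5$ lie in $L^2(\R^3)$, so all the relevant integrals converge.

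Granting these estimates, the conclusion follows quickly. The operator $D_3=S_3 D_3 S_3$ is a fixed finite-rank operator with $\|D_3/\lambda^4\|_{L^2\to L^2}\les\lambda^{-4}$ and $\|\partial_\lambda(D_3/\lambda^4)\|_{L^2\to L^2}\les\lambda^{-5}$; sandwiching the two weighted bounds gives
$$
|\mathcal E(\lambda)(x,y)|\les\lambda^2\la x\ra^{\frac52}\cdot\lambda^{-4}\cdot\lambda^2\la y\ra^{\frac52}=\la x\ra^{\frac52}\la y\ra^{\frac52},\qquad \lambda\,|\partial_\lambda\mathcal E(\lambda)(x,y)|\les\la x\ra^{\frac52}\la y\ra^{\frac52},
$$
so $\mathcal E(\lambda)=O_1(\la x\ra^{\frac52}\la y\ra^{\frac52})$, which for $0<\lambda<1$ is in particular $O_1(\lambda^{-1}\la x\ra^{\frac52}\la y\ra^{\frac52})$; the transpose term is handled identically. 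The proof of Lemma~\ref{lem:t-34bound} applies verbatim to kernels of the form $O_1(\lambda^{-\alpha}g(x,y))$ with $g$ independent of $\lambda$, producing the bound $\la t\ra^{-1+\frac\alpha4}g(x,y)$; taking $\alpha=1$ and $g(x,y)=\la x\ra^{\frac52}\la y\ra^{\frac52}$ bounds the Stone contribution of $\mathcal E(\lambda)$ by $\la t\ra^{-\frac34}\la x\ra^{\frac52}\la y\ra^{\frac52}$. Together with the unweighted $\la t\ra^{-\frac34}$ bounds for the remaining non-finite-rank pieces and the same operator $G$ as in Theorem~\ref{thm:second/thirdkind}, this yields \eqref{eq:thirdkind2}.

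The main obstacle is the weighted refinement of Lemma~\ref{lem:QvR}, and within it the region $\lambda|y-y_1|\gtrsim1$: there one has no resolvent expansion, the profile of $R^\pm(H_0,\lambda^4)-G_0$ itself grows linearly in $|y-y_1|$, and the two powers of $\lambda$ must be recovered purely from the three moment conditions carried by $S_3$ and from pointwise control of $\partial_{y_1}^{\le3}R^\pm(H_0,\lambda^4)$, all the while keeping the $y$-weight no larger than $\la y\ra^{\frac52}$ and simultaneously tracking one $\lambda$-derivative.
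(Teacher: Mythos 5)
Your strategy is the paper's: isolate the single term $[R^{+}(H_0,\lambda^4)-R^{-}(H_0,\lambda^4)]v\frac{D_3}{\lambda^4}v[R^{+}(H_0,\lambda^4)-G_0]$ (and its companion from the $\pm$ algebra), exploit the extra orthogonality $S_3vP_2(x)=0$ for quadratic polynomials to upgrade the gain of $\lambda$ at the cost of $\la x\ra^{\f52}\la y\ra^{\f52}$ weights, keep the same $G$ as in Theorem~\ref{thm:second/thirdkind}, and finish with the weighted version of Lemma~\ref{lem:t-34bound} at $\alpha=1$. The difference is purely in how the per-factor gain is obtained. The paper does it in two lines: truncate the expansion \eqref{eq:R0low} at a fractional order, $[R^+-R^-]=\frac{a^+-a^-}{\lambda}+(a_1^+-a_1^-)\lambda G_1+O((\lambda|x-y|)^{\ell}|x-y|)$ with $\ell=\f32$, note that the displayed polynomial terms are annihilated by $S_3$, and read off the pointwise bound $O_1(\lambda^{\f32}\la\cdot\ra^{\f52})$ for each outer factor, so the sandwich is $O_1(\lambda^{-1}\la x\ra^{\f52}\la y\ra^{\f52})$. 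You instead propose a weighted $L^2$ refinement of Lemma~\ref{lem:QvR} with a claimed gain of $\lambda^{2}$ per factor at weight $\la y\ra^{\f52}$. Be aware that the straightforward Taylor/interpolation trade-off (which is all your sketch invokes) gives $\lambda^{\ell}$ only together with weight $\la y\ra^{\ell+1}$, i.e. $\lambda^{\f32}$ at weight $\f52$ or $\lambda^{2}$ at weight $3$; getting $\lambda^{2}$ with weight only $\f52$ requires the further observation that the terms surviving the $S_3$ cancellation (e.g. $|y_1|^4-4|y_1|^2\,y_1\cdot y$ from $G_3$) carry low powers of $|y|$, plus a genuinely careful treatment of the region $\lambda|y-y_1|\gtrsim1$ and of the $\lambda$-derivative, which you flag but do not carry out. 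This overclaim is harmless for the corollary: since you ultimately downgrade to $\alpha=1$, the weaker and easily available bound $\lambda^{\f32}$ per factor at weight $\f52$ (exactly what the paper proves) already yields $O_1(\lambda^{-1}\la x\ra^{\f52}\la y\ra^{\f52})$ and hence \eqref{eq:thirdkind2}. So your argument is correct in substance, but the paper's fractional-truncation shortcut avoids the heavy weighted-$L^2$ machinery and the unneeded sharper exponent.
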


\begin{proof}
	
	We need only supply a new bound for the contribution of the following
	\begin{align}
		[R^{+}(H_0, \lambda^4) - R^{-}(H_0, \lambda^4)] v \frac{D_3}{\lambda^4} v [R^{+}(H_0, \lambda^4) -G_0].
	\end{align}
	We note that $S_3v P_2(x)=0$ for any quadratic polynomial in the $x_j$ variables.  Hence, $S_3vG_1=0$ as we may write $G_1(x,y)=|x|^2-2x\cdot y+|y|^2$.  By truncating the expansion  in \eqref{eq:R0low} earlier, we see
	$$
		[R^+(H_0,\lambda^4)-R^-(H_0,\lambda^4)]=\frac{a^+-a^-}{\lambda}+(a_1^+-a_1^-)\lambda G_1 +O((\lambda|x-y|)^\ell |x-y|) \qquad 1<\ell\leq 3.
	$$
	Using the orthogonality relations above and selecting $\ell=\f32$,   one can see that
	$$
		[R^+(H_0,\lambda^4)-R^-(H_0,\lambda^4)](x, \cdot) vS_3= O_1(\lambda^{\f32}\la x \ra^{\f52})
	$$
	A very similar computation shows that
	$$
		S_3v [R^{+}(H_0, \lambda^4) -G_0](\cdot, y)= O_1(\lambda^{\f32}\la y\ra^{\f52}).
	$$
	Combining these, we see that
	$$
		[R^{+}(H_0, \lambda^4) - R^{-}(H_0, \lambda^4)] v \frac{D_3}{\lambda^4} v [R^{+}(H_0, \lambda^4) -G_0]= O_1(\lambda^{-1} \la x\ra^{\f52} \la y \ra^{\f52}).
	$$
	Applying Lemma~\ref{lem:t-34bound} proves the claim.
	
\end{proof}

\section{The Perturbed Evolution For Large Energy } \label{sec:large}

For completeness, we include a proof of the dispersive bound for the large energy portion of the evolution.  Here we need to assume the lack of eigenvalues embedded in $[0,\infty)$ for the perturbed fourth order operator $H=(-\Delta)^2+V$.  It is known that embedded eigenvalues may exist even for compactly supported smooth potentials.   To complete the proof of Theorem~\ref{thm:main} we show

\begin{prop} \label{prop:large}Let $|V(x) | \les \la x \ra^{-3-}$, and assume there are no embedded eigenvalues in the continuous spectrum of $H$, then 
\begin{align} \label{eq:large}
\sup_{x,y\in \mathbb R^3}\Bigg|\int_0^\infty e^{it\lambda^4} \lambda^3 \widetilde{\chi}(\lambda) R_V^{\pm} (\lambda^4)  (x,y) \, d\lambda \Bigg|\les | t |^{-\frac34}.
\end{align}
\end{prop}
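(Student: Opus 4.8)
The plan is to iterate the symmetric resolvent identity \eqref{resid} one more step and treat the three resulting pieces separately.  Since $vUv=V$ as multiplication operators, \eqref{resid} becomes
\begin{align*}
R_V^{\pm}(\lambda^4)=R^{\pm}(H_0,\lambda^4)-R^{\pm}(H_0,\lambda^4)\,V\,R^{\pm}(H_0,\lambda^4)-R^{\pm}(H_0,\lambda^4)\,v\,N^{\pm}(\lambda)\,v\,R^{\pm}(H_0,\lambda^4),
\end{align*}
where $N^{\pm}(\lambda):=(M^{\pm}(\lambda))^{-1}-U$.  The contribution of the free resolvent to \eqref{eq:large} is already $\les|t|^{-\f34}$ by Remark~\ref{rmk:large} (take $\alpha=1$, using \eqref{eq:freeO1}), so it remains to treat the two correction terms.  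I will use throughout \eqref{eq:freeO1}, the pointwise bound $|R^{\pm}(H_0,\lambda^4)(a,b)|\les\min\{\lambda^{-1},\lambda^{-2}|a-b|^{-1}\}$ following from \eqref{eq:R0lambda}, the Kato-class bounds $\sup_a\int\tfrac{|V(z)|}{|a-z|}\,dz+\sup_a\int\tfrac{|V(z)|}{|a-z|^2}\,dz<\infty$ (valid since $|V|\les\la x\ra^{-3-}$), and the following control of $M^{\pm}(\lambda)$ for $\lambda>\lambda_0$: on every compact $[\lambda_0,\Lambda]$ the map $\lambda\mapsto(M^{\pm}(\lambda))^{-1}$ is $C^1$ in the $L^2\to L^2$ operator norm with $\|(M^{\pm}(\lambda))^{-1}\|+\|\partial_\lambda(M^{\pm}(\lambda))^{-1}\|\les1$, while for $\lambda\ge\Lambda$ (with $\Lambda$ a fixed large constant) $\|N^{\pm}(\lambda)\|+\|\partial_\lambda N^{\pm}(\lambda)\|\les\lambda^{-2}$.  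The statement for $\lambda\ge\Lambda$ is a Neumann series argument (using $\|vR^{\pm}(H_0,\lambda^4)v\|_{HS}+\|v\,\partial_\lambda R^{\pm}(H_0,\lambda^4)\,v\|_{HS}\les\lambda^{-2}$), and the statement on $[\lambda_0,\Lambda]$ reduces to the invertibility of $M^{\pm}(\lambda)$ there, which is furnished by the limiting absorption principle \cite[Theorem~2.23]{fsy} — this is where the absence of embedded eigenvalues enters.

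For the first Born term I would use that $R^{\pm}(H_0,\lambda^4)\,V\,R^{\pm}(H_0,\lambda^4)=\lambda^{-4}B^{\pm}(\lambda)$ with $\sup_{x,y\in\mathbb R^3}\big(|B^{\pm}(\lambda)(x,y)|+|\partial_\lambda B^{\pm}(\lambda)(x,y)|\big)\les1$.  This holds because, writing $R^{\pm}(H_0,\lambda^4)(a,b)=\tfrac{1}{2\lambda^2}\big(\tfrac{e^{\pm i\lambda|a-b|}}{4\pi|a-b|}-\tfrac{e^{-\lambda|a-b|}}{4\pi|a-b|}\big)$, the $\lambda$-derivative $\partial_\lambda\tfrac{e^{\epsilon\lambda r}}{4\pi r}=\tfrac{\epsilon}{4\pi}e^{\epsilon\lambda r}$ satisfies $\big|\tfrac{\epsilon}{4\pi}e^{\epsilon\lambda r}\big|\le\tfrac{1}{4\pi}$ uniformly in $r\ge0,\lambda>0$ for every $\epsilon\in\{i,-i,-1\}$ (the denominator $r$ absorbs the $r$ produced by the derivative), so expanding the product of the two resolvents and invoking the Kato bounds gives the claim.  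The contribution to \eqref{eq:large} is therefore $\int_{\lambda_0}^{\infty}e^{it\lambda^4}\lambda^{-1}\widetilde\chi(\lambda)\,B^{\pm}(\lambda)(x,y)\,d\lambda$, which is controlled — exactly as in Remark~\ref{rmk:large} — through the $e^{it\lambda^4}$ oscillation alone: for $|t|\ge1$, one integration by parts via $e^{it\lambda^4}\lambda^3=\tfrac{1}{4it}\partial_\lambda e^{it\lambda^4}$ (the boundary terms vanish since $\widetilde\chi\equiv0$ near $\lambda_0$ and $\lambda^{-4}B^{\pm}\to0$) gives $\les|t|^{-1}$; for $|t|<1$, splitting the integral at $\lambda=|t|^{-\f14}>\lambda_0$, estimating the low piece absolutely by $\les\log(|t|^{-\f14}/\lambda_0)$ and integrating the high piece by parts once (its boundary term at $|t|^{-\f14}$ is $O(1)$) gives $\les\log(|t|^{-\f14}/\lambda_0)+|t|^{-\f14}$.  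In all cases this is $\les|t|^{-\f34}$ uniformly in $x,y$.

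For the remainder term I would use the operator estimates from the first paragraph.  Since $\|R^{\pm}(H_0,\lambda^4)(a,\cdot)v\|_{L^2}+\|\partial_\lambda[R^{\pm}(H_0,\lambda^4)(a,\cdot)v]\|_{L^2}\les\lambda^{-2}$ uniformly in $a$ (by \eqref{eq:freeO1}, the pointwise resolvent bound and $V\in L^1$), and $\|N^{\pm}(\lambda)\|+\|\partial_\lambda N^{\pm}(\lambda)\|\les1$ on $[\lambda_0,\Lambda]$ and $\les\lambda^{-2}$ beyond, one gets that $[R^{\pm}(H_0,\lambda^4)\,v\,N^{\pm}(\lambda)\,v\,R^{\pm}(H_0,\lambda^4)](x,y)$ and its $\lambda$-derivative are $\les\lambda^{-4}$ on $[\lambda_0,\Lambda]$ and $\les\lambda^{-6}$ on $[\Lambda,\infty)$, uniformly in $x,y$.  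Hence $\int_{\lambda_0}^{\infty}\lambda^3\widetilde\chi(\lambda)\,|[\,\cdot\,](x,y)|\,d\lambda\les1$, an absolutely convergent integral, while a single integration by parts against $e^{it\lambda^4}$ (boundary terms again vanishing) gives $\les|t|^{-1}$; combining the two, $\min\{C,C|t|^{-1}\}\les|t|^{-\f34}$, completes the proof of \eqref{eq:large}.

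The one genuinely nontrivial ingredient is the invertibility and $C^1$-control of $(M^{\pm}(\lambda))^{-1}$ across the whole range $\lambda>\lambda_0$, supplied by the limiting absorption principle of \cite{fsy} and encoding the no-embedded-eigenvalue hypothesis; granting this, the rest is routine bookkeeping.  Two features are worth flagging:  the first Born contribution is not absolutely convergent in $\lambda$ — which is precisely why the bound must degenerate as $|t|\to0$, mirroring the free evolution in Remark~\ref{rmk:large} — and one never needs a second $\lambda$-derivative of a free fourth-order resolvent, so no powers of $\la x\ra$ intrude and the estimate stays uniform in $x,y$.
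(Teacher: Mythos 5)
Your proposal is correct, and its skeleton is the same as the paper's: the three-term Born-type decomposition (free resolvent, first Born term, remainder), the free and first Born pieces handled by the non-stationary phase argument of Lemma~\ref{lem:t-34bound}/Remark~\ref{rmk:large}, and the limiting absorption principle of \cite[Theorem~2.23]{fsy} as the one nontrivial input encoding the no-embedded-eigenvalue hypothesis. Indeed your decomposition is algebraically identical to the paper's, since $N^{\pm}(\lambda)=(M^{\pm}(\lambda))^{-1}-U=-UvR_V^{\pm}(\lambda^4)vU$, so $R^{\pm}(H_0,\lambda^4)vN^{\pm}vR^{\pm}(H_0,\lambda^4)=-R^{\pm}(H_0,\lambda^4)VR_V^{\pm}(\lambda^4)VR^{\pm}(H_0,\lambda^4)$. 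Where you differ is in the bookkeeping for the remainder: the paper sandwiches $VR_V^{\pm}V$ between the mappings $L^1\to L^{2,-\sigma}$ and $L^{2,\sigma}\to L^{\infty}$ of the free resolvent and quotes the weighted bounds on $R_V$ and $\partial_z R_V$ uniformly for all $\lambda\gtrsim1$ (this is where the paper needs the extra decay $|V|\les\la x\ra^{-3-}$, so that $V:L^{2,-\f32-}\to L^{2,\f32+}$ when the derivative lands on $R_V$); you instead bound $\sup_a\|R^{\pm}(H_0,\lambda^4)(a,\cdot)v\|_{L^2}$ directly, use compactness of $[\lambda_0,\Lambda]$ together with $\partial_\lambda (M^{\pm})^{-1}=-(M^{\pm})^{-1}(v\partial_\lambda R^{\pm}(H_0,\lambda^4)v)(M^{\pm})^{-1}$ there, and a Neumann series for $\lambda\ge\Lambda$. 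This buys a small simplification — at high energy you avoid the LAP entirely, and on the compact window you only need the $k=0$ statement of the LAP (invertibility of $M^{\pm}(\lambda)$, via $(M^{\pm})^{-1}=U-UvR_V^{\pm}vU$, which you should state explicitly since it is the bridge from \cite{fsy} to your operator bounds), the $\lambda$-derivative coming for free from the explicit kernel of $\partial_\lambda R^{\pm}(H_0,\lambda^4)$ — while the paper's route is shorter to write because the quantitative bound $\|\partial_z^kR_V(z)\|\les z^{-(3+3k)/4}$ covers all $\lambda\gtrsim1$ at once. Your finer $\lambda^{-4}$ analysis of the first Born term is likewise harmless but unnecessary: the paper simply notes $R^{\pm}VR^{\pm}=O_1(\lambda^{-1})$ for $\lambda\gtrsim1$ and invokes Remark~\ref{rmk:large}.
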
 

To prove the Proposition~\ref{prop:large} we use the resolvent identities and write, 
\begin{align}\label{large symmetric}
R_V(\lambda^4)= R^\pm(H_0, \lambda^4)  - R^\pm(H_0, \lambda^4) VR^\pm (H_0, \lambda^4) + R^\pm(H_0, \lambda^4) VR_V(\lambda^4) V R^\pm (H_0, \lambda^4).
\end{align}

Recall by the second part of Remark~\ref{rmk:large}, we know that the first summand in \eqref{large symmetric} satisfies the bound in \eqref{eq:large}. Therefore, it suffices to establish the bound in Proposition~\ref{prop:large} is valid for the last two summands in \eqref{large symmetric}. 
Recall by \eqref{eq:freeO1}, we have 
\begin{align} \label{est1}
R^\pm(H_0, \lambda^4)  (x,y)  =O_1(\lambda^{-1}). 
\end{align}
This, along with the fact that $\lambda\gtrsim 1$, shows that 
$$
	R^\pm (H_0, \lambda^4)VR^\pm(H_0, \lambda^4)=O_1(\lambda^{-1}),
$$
as the following bounds hold uniformly in  $x,y$:
 \begin{align*}
&|R^\pm (H_0, \lambda^4)VR^\pm(H_0, \lambda^4)(x,y)| \les \lambda^{-1} \int_{\R^3} |V (x_1)| dx_1\les \lambda^{-1} \\
&| \partial_\lambda\{ R^\pm (H_0, \lambda^4)VR^\pm(H_0, \lambda^4)]\}(x,y)| \les \lambda^{-2} \int_{\R^3} |V (x_1)| dx_1 \les \lambda^{-2}.
 \end{align*}
Hence, by first part of Remark~\ref{rmk:large}, $R^\pm VR^\pm$ contributes $|t|^{-\f34}$ to Stone's formula. 

We next consider the last  term in \eqref{large symmetric}. To control this term, we utilize the following.
\begin{theorem} \label{th:LAP}\cite[Theorem~2.23]{fsy} Let $|V(x)|\les \la x \ra ^{-k-1}$. Then for any $\sigma>k+1/2$, $\partial_z^k R_V(z) \in \mathcal{B}(L^{2,\sigma}(\R^d), L^{2,-\sigma}(\R^d))$ is continuous for $z \notin {0}\cup \Sigma$. Further, 
\begin{align*} 
\|\partial_z^k  R_V(z) \|_{L^{2,\sigma}(\R^d) \rar L^{2,-\sigma}(\R^d)} \les z^{-(3+3k)/4}. 
\end{align*}
\end{theorem}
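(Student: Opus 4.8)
The plan is to establish the limiting absorption principle in two stages: first for the free operator $H_0=\Delta^2$, then transfer it to $H=H_0+V$ by a Fredholm inversion, with the $z$-derivatives controlled by induction on $k$. For the free stage I would exploit the factorization \eqref{RH_0 rep}, writing with $\mu=z^{1/2}$ (the Schr\"odinger energy)
$$R^\pm(H_0;z)=\f{1}{2z^{1/2}}\Big(R_0^\pm(\mu)-R_0(-\mu)\Big).$$
The elliptic summand $R_0(-\mu)=(-\Delta+\mu)^{-1}$ is bounded on $L^2$ with norm $\les\mu^{-1}$ and is smooth in $\mu>0$, hence contributes only faster-decaying, harmless terms; the analytic content sits in the oscillatory summand $R_0^\pm(\mu)$, for which I would invoke Agmon's sharp weighted resolvent estimates \cite{agmon} (see also \cite{JenKat}),
$$\big\|\partial_\mu^j R_0^\pm(\mu)\big\|_{\cB(L^{2,\sigma},\,L^{2,-\sigma})}\les \mu^{-\f{j+1}2},\qquad \sigma>j+\tfrac12,$$
together with the existence and norm-continuity of the boundary values $R_0^\pm(\mu)$. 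Since $\partial_z=\f{1}{2z^{1/2}}\partial_\mu$, each application of $\partial_z$ contributes a factor $z^{-3/4}$ (the prefactor $z^{-1/2}$ times Agmon's $\mu^{-1/2}=z^{-1/4}$ derivative gain), so collecting powers yields $\|\partial_z^k R^\pm(H_0;z)\|_{\cB(L^{2,\sigma},L^{2,-\sigma})}\les z^{-(3+3k)/4}$ for $\sigma>k+\tfrac12$, which is the claimed bound when $V=0$; the weight threshold $\sigma>k+\tfrac12$ reflects the $|x-y|^j$ growth of $\partial_\mu^j R_0^\pm$.

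To reach $H$ I would first treat $k=0$ via the symmetric resolvent identity \eqref{resid},
$$R_V^\pm(z)=R^\pm(H_0;z)-R^\pm(H_0;z)\,v\,\big(M^\pm(z)\big)^{-1}v\,R^\pm(H_0;z),\qquad M^\pm(z)=U+vR^\pm(H_0;z)v,$$
with $U=\mathrm{sign}(V)$, $v=|V|^{1/2}$. The operator $vR^\pm(H_0;z)v$ is Hilbert--Schmidt and norm-continuous in $z$ up to the real axis, so $M^\pm(z)$ is a continuous family of Fredholm operators of index zero and, by the analytic Fredholm theorem, $(M^\pm(z))^{-1}$ exists and is continuous off a discrete set. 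I would identify that set with $\{0\}\cup\Sigma$ through the Birman--Schwinger correspondence: any $\phi$ with $M^\pm(z)\phi=0$ produces $\psi=-R^\pm(H_0;z)v\phi$ solving $H\psi=z\psi$, and at positive energy the radiation structure forces $\psi\in L^2$, i.e. $z\in\Sigma$. For $z\notin\{0\}\cup\Sigma$ this gives a bounded inverse, and for large $z$ the free bound gives $\|vR^\pm(H_0;z)v\|_{L^2\to L^2}\les z^{-3/4}\to0$, so $M^\pm(z)=U\big(I+Uv R^\pm(H_0;z)v\big)$ is invertible by Neumann series with $\|(M^\pm(z))^{-1}\|\les1$ uniformly. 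Feeding this into \eqref{resid} and using the free $k=0$ bound on the flanking resolvents yields $\|R_V^\pm(z)\|_{\cB(L^{2,\sigma},L^{2,-\sigma})}\les z^{-3/4}$.

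The derivative bounds I would obtain by induction on $k$ from $R_V^\pm=R^\pm(H_0;z)-R^\pm(H_0;z)VR_V^\pm$. Applying Leibniz,
$$\big(I+R^\pm(H_0;z)V\big)\,\partial_z^k R_V^\pm=\partial_z^k R^\pm(H_0;z)-\sum_{j=0}^{k-1}\binom{k}{j}\big(\partial_z^{k-j}R^\pm(H_0;z)\big)V\big(\partial_z^j R_V^\pm\big),$$
where $(I+R^\pm(H_0;z)V)^{-1}$ is bounded by the $k=0$ analysis. Assuming inductively the bound $z^{-(3+3j)/4}$ for $\partial_z^j R_V^\pm$ with $j<k$, each right-hand summand is estimated by threading the weighted spaces $L^{2,\sigma}\to L^{2,-\sigma_j}\xrightarrow{V}L^{2,\tau}\to L^{2,-\sigma}$, where the decay $|V|\les\la x\ra^{-k-1}$ supplies exactly the budget $\sigma_j+\tau\le k+1$ needed to meet the free thresholds $\sigma_j>j+\tfrac12$ and $\tau>(k-j)+\tfrac12$. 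Collecting powers of $z$ then reproduces $\|\partial_z^k R_V^\pm(z)\|_{\cB(L^{2,\sigma},L^{2,-\sigma})}\les z^{-(3+3k)/4}$ for $\sigma>k+\tfrac12$.

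I expect the genuine obstacle to be the Birman--Schwinger step at positive energy: showing that a null vector of $M^\pm(z)$ produces an honest $L^2$ eigenfunction rather than a merely bounded, radiating (resonant) solution. This is the fourth-order analogue of the absence of positive-energy resonances, and here the factorization \eqref{RH_0 rep} is indispensable, since the exponentially decaying summand $R_0(-z^{1/2})$ contributes a Schwartz tail while the oscillatory summand $R_0^\pm(z^{1/2})$ must be shown to carry no outgoing radiation for a true bound state; in the application in Proposition~\ref{prop:large} the set $\Sigma$ is empty by hypothesis, so this exceptional set disappears. A secondary technical point is the weight bookkeeping in the induction, where the lower bounds force $\sigma_j+\tau>k+1$ against the budget $\sigma_j+\tau\le k+1$, so the estimate is saturated at the endpoint and must be opened by the small losses built into the hypotheses ($\sigma>k+\tfrac12$ and the implicit ``$-$'' in the decay of $V$), exactly as in the Agmon--Kato--Kuroda theory.
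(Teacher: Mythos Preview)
The paper does not supply its own proof of this statement: Theorem~\ref{th:LAP} is quoted verbatim as \cite[Theorem~2.23]{fsy} and used as a black box in the large-energy argument of Section~\ref{sec:large}. There is therefore nothing in the paper to compare your proposal against.

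That said, your outline follows the standard route one would expect in \cite{fsy}: free estimates via the factorization \eqref{RH_0 rep} reducing to Agmon's bounds for $R_0^\pm$, then a Fredholm/Birman--Schwinger argument to pass to $H$, and Leibniz induction for derivatives. Two remarks. First, you correctly flag the weight bookkeeping as tight: with $|V|\les\la x\ra^{-k-1}$ exactly (no extra decay), the constraints $\sigma_j>j+\tfrac12$ and $\tau>(k-j)+\tfrac12$ give $\sigma_j+\tau>k+1$, which cannot be met by multiplication by $V$ alone; the actual proof in \cite{fsy} handles this, but your sketch does not resolve it---invoking an ``implicit $-$'' in the decay of $V$ is not licensed by the hypothesis as stated. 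Second, the uniform-in-$z$ bound $\|(M^\pm(z))^{-1}\|\les 1$ is only obtained from your Neumann series for large $z$; on compact sets away from $\{0\}\cup\Sigma$ one needs continuity plus compactness to get uniformity, which you have implicitly but should state. These are the points where a full proof requires more than the sketch provides.
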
	

The following suffices to finish the proof of Proposition~\ref{prop:large}. 
\begin{lemma} Let $|V(x) | \les \la x \ra^{-3-}$, then 
\begin{align*}
\sup_{x,y\in \mathbb R^3}
\Big| \int_{0}^{\infty} e^{-it\lambda^4} \widetilde\chi(\lambda) \lambda^{3} [R^\pm(H_0, \lambda^4) V R_V^\pm(\lambda^4) VR^\pm(H_0, \lambda^4)](x,y) d \lambda \Big| \les |t|^{-\f34},
\end{align*}
\end{lemma}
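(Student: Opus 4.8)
The plan is to reduce the oscillatory integral to an application of Lemma~\ref{lem:t-34bound} (in the high-energy form of Remark~\ref{rmk:large}) by showing that the operator
$$
\mE(\lambda)(x,y):=\widetilde\chi(\lambda)\,[R^\pm(H_0,\lambda^4)\,V\,R_V^\pm(\lambda^4)\,V\,R^\pm(H_0,\lambda^4)](x,y)
$$
satisfies $\mE(\lambda)=O_1(\lambda^{-\alpha})$ for some $\alpha<4$ uniformly in $x,y$, which then yields the $|t|^{-3/4}$ bound (with the loss at $t\to0$ absorbed in Remark~\ref{rmk:large} since $\widetilde\chi$ cuts off small energies). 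To get pointwise-in-$(x,y)$ control while the limiting absorption principle (Theorem~\ref{th:LAP}) only gives a bound in $\mathcal B(L^{2,\sigma},L^{2,-\sigma})$, I would split one factor of $V$ at each end and interpret the middle as
$$
[R^\pm(H_0,\lambda^4)v](x,\cdot)\;\big[ |V|^{1/2}\,\widetilde\chi R_V^\pm(\lambda^4)\,|V|^{1/2}\big]\;[v R^\pm(H_0,\lambda^4)](\cdot,y),
$$
using the factorization $V=v\,U\,v$ with $v=|V|^{1/2}$. The outer factors are handled by \eqref{eq:freeO1} / \eqref{eq:freeO1L2}: one has $\|vR^\pm(H_0,\lambda^4)(\cdot,y)\|_{L^2_\sigma}\les \lambda^{-1}$ and similarly for the $\lambda$-derivative with an extra $\lambda^{-1}$, provided $|V(x)|\les\la x\ra^{-3-}$ so that $v\la\cdot\ra^\sigma\in L^2$ for a suitable $\sigma>7/2$ (here one uses the decay of $v$ to pay for the weight demanded by the LAP).

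The core step is then to pair these weighted-$L^2$ bounds on the free factors against Theorem~\ref{th:LAP}, which gives $\|R_V^\pm(\lambda^4)\|_{L^{2,\sigma}\to L^{2,-\sigma}}\les \lambda^{-3}$ and $\|\partial_\lambda R_V^\pm(\lambda^4)\|_{L^{2,\sigma}\to L^{2,-\sigma}}\les \lambda^{-6}\cdot\lambda^3=\lambda^{-3}$ after the chain rule with $z=\lambda^4$ (since $\partial_\lambda = 4\lambda^3\partial_z$ and $\partial_z R_V(z)\les z^{-3/2}=\lambda^{-6}$). Combining, a typical term is bounded pointwise by
$$
|\mE(\lambda)(x,y)|\les \widetilde\chi(\lambda)\,\|vR^\pm(H_0,\lambda^4)(\cdot,x)\|_{L^{2,\sigma}}\,\|R_V^\pm(\lambda^4)\|_{L^{2,\sigma}\to L^{2,-\sigma}}\,\|vR^\pm(H_0,\lambda^4)(\cdot,y)\|_{L^{2,\sigma}}\les \lambda^{-1}\lambda^{-3}\lambda^{-1}=\lambda^{-5},
$$
and analogously $|\partial_\lambda\mE(\lambda)(x,y)|\les\lambda^{-6}$, so $\mE(\lambda)=O_1(\lambda^{-5})$. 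A naive reading of Lemma~\ref{lem:t-34bound} requires $\alpha<4$, so $\lambda^{-5}$ is too singular; but on the support of $\widetilde\chi$ we have $\lambda\gtrsim\lambda_0$, hence $\lambda^{-5}\les \lambda_0^{-2}\lambda^{-3}$ and $\lambda^{-6}\les\lambda_0^{-2}\lambda^{-4}$, so effectively $\mE(\lambda)=O_1(\lambda^{-\alpha})$ with $\alpha=3<4$ once the high-energy cutoff is inserted. Then Remark~\ref{rmk:large} (the $\widetilde\chi$ version of Lemma~\ref{lem:t-34bound} with $\alpha=3$) gives the bound $\les|t|^{-1+3/4}=|t|^{-1/4}$ — which is not good enough; the remedy is that near $\lambda\sim\lambda_0$ the cutoff is bounded and away from it the integration by parts in the proof of Lemma~\ref{lem:t-34bound} gains arbitrarily many powers of $1/t$ from repeated $\partial_\lambda$'s hitting $\widetilde\chi$, so in fact one only needs the genuine decay $\lambda^{-3}$ to produce $|t|^{-3/4}$. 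Concretely I would integrate by parts once using $\partial_\lambda e^{it\lambda^4}=4it\lambda^3 e^{it\lambda^4}$, writing $\lambda^3\mE(\lambda)=\lambda^3\widetilde\chi(\lambda)\tilde{\mE}(\lambda)$ with $\tilde\mE=O_1(\lambda^{-3})$ meaning $|\tilde\mE|+\lambda|\partial_\lambda\tilde\mE|\les\lambda^{-3}$; the boundary term vanishes (the cutoff and the decay kill both ends), leaving $t^{-1}\int_{\lambda_0}^\infty |\partial_\lambda(\widetilde\chi\tilde\mE)|\,d\lambda\les t^{-1}$, which beats $|t|^{-3/4}$ for $|t|>1$, while for $|t|\le1$ one uses absolute integrability of $\lambda^3\widetilde\chi\mE$ directly — wait, that is not absolutely convergent, so instead for $|t|\le1$ one notes the claimed bound $|t|^{-3/4}$ is trivially weaker than the $O(1)$... no: the integral $\int_{\lambda_0}^\infty\lambda^3\lambda^{-3}\,d\lambda$ diverges. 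The honest fix, exactly as in Remark~\ref{rmk:large}, is to keep the split $\int_0^{t^{-1/4}}+\int_{t^{-1/4}}^\infty$ from the proof of Lemma~\ref{lem:t-34bound}: the first piece is empty once $t^{-1/4}<\lambda_0$, i.e. $|t|>\lambda_0^{-4}$, contributing nothing, and for $1<|t|\le\lambda_0^{-4}$ the whole integral is comparable to a constant times $|t|^{-3/4}$ up to the fixed constant $\lambda_0$. Thus the claimed estimate follows.

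The main obstacle, and the only place real care is needed, is verifying the $\lambda$-derivative bound $\partial_\lambda\mE(\lambda)=O(\lambda^{-6})$: the derivative can fall on either free resolvent (handled by the second estimate in \eqref{eq:freeO1L2}, costing $\lambda^{-1}$ relative to the undifferentiated factor) or on $R_V^\pm(\lambda^4)$, where one must invoke the $k=1$ case of Theorem~\ref{th:LAP} and the chain rule $\partial_\lambda=4\lambda^3\partial_z$, yielding $4\lambda^3\cdot z^{-3/2}=4\lambda^{3}\lambda^{-6}=4\lambda^{-3}$, i.e. the same size as the function itself — so differentiating $R_V$ costs nothing in $\lambda$-powers but requires the stronger decay hypothesis $|V(x)|\les\la x\ra^{-3-}$ to guarantee $\partial_z R_V\in\mathcal B(L^{2,\sigma},L^{2,-\sigma})$ for $\sigma$ large enough (here $\sigma>k+1/2=3/2$ from the theorem, while the free-factor bound needs $v\la\cdot\ra^\sigma\in L^2$, i.e. $\sigma<3/2+(3+)/2-3/2$... ); tracking these competing weight requirements and confirming $|V(x)|\les\la x\ra^{-3-}$ suffices for both is the delicate bookkeeping. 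Everything else is a routine combination of Cauchy--Schwarz in the $L^{2,\sigma}$--$L^{2,-\sigma}$ pairing, the free bounds from Section~\ref{sec:free}, and the integration-by-parts argument already established in Lemma~\ref{lem:t-34bound} and Remark~\ref{rmk:large}.
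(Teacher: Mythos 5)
Your overall strategy---sandwiching the limiting absorption principle of Theorem~\ref{th:LAP} between two free resolvents, establishing a uniform-in-$(x,y)$ $O_1(\lambda^{-\alpha})$ bound, and concluding via the high-energy form of Lemma~\ref{lem:t-34bound} (Remark~\ref{rmk:large})---is the same as the paper's, but the central weighted-norm bookkeeping has a genuine gap. You split $V=vUv$ and place the spatial weights on the outer factors, claiming $\|vR^\pm(H_0,\lambda^4)(\cdot,y)\|_{L^{2,\sigma}}\les \lambda^{-1}$ for a ``suitable $\sigma>7/2$,'' while the $k=1$ case of Theorem~\ref{th:LAP} (needed when $\partial_\lambda$ hits $R_V$) already forces $\sigma>3/2$. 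Since $|v|^2=|V|\les\la x\ra^{-3-}$, one has $\|v\la\cdot\ra^{\sigma}\|_{L^2}^2=\int_{\R^3}\la x\ra^{2\sigma}|V(x)|\,dx$, which diverges for every $\sigma\geq 0$ under the stated hypothesis; finiteness with $\sigma>3/2$ would require decay stronger than $\la x\ra^{-6}$. So the displayed estimate on which your pointwise bound rests is false with $|V(x)|\les\la x\ra^{-3-}$, and your trailing computation ``$\sigma<3/2+(3+)/2-3/2\ldots$'' is exactly this obstruction, left unresolved. The correct distribution of decay (the paper's) is different: since the free kernel is merely bounded by $\lambda^{-1}$, view $R^\pm(H_0,\lambda^4)(x,\cdot)$ as an element of $L^\infty\subset L^{2,-\f32-}$ (so $R^\pm(H_0,\lambda^4)=O_1(\lambda^{-1})$ as an operator $L^1\to L^{2,-\sigma}$, $\sigma>\f32$, with the dual bound $L^{2,\sigma}\to L^\infty$), and let the \emph{full} potential carry the weight: multiplication by $V$ with $|V|\les\la x\ra^{-3-}$ maps $L^{2,-\f32-}\to L^{2,\f32+}$, precisely what the $k=1$ resolvent bound demands. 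Equivalently, if you insist on $V=vUv$, the weights must be absorbed by $v$ in $L^\infty$ (i.e.\ $v\la\cdot\ra^{\f32+}\in L^\infty$) adjacent to $R_V$, with the outer factors $vR^\pm(H_0,\lambda^4)(\cdot,y)$ measured in \emph{unweighted} $L^2$; as written, your scheme would need roughly $|V|\les\la x\ra^{-6-}$.

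The time-integral part of your argument is repairable but needlessly convoluted. Once a bound $\mE=O_1(\lambda^{-5})$ (or $O_1(\lambda^{-3})$) is available on the support of $\widetilde\chi$, one should simply use $\lambda\gtrsim\lambda_0$ to downgrade to $\mE=O_1(\lambda^{-1})$ and invoke Remark~\ref{rmk:large} with $\alpha=1$, which yields $|t|^{-\f34}$ directly; this is what the paper does. Your detour through $\alpha=3$, the resulting $|t|^{-\f14}$, and the ad hoc case analysis in $t$ can be avoided entirely, and the assertion that for $1<|t|\le\lambda_0^{-4}$ ``the whole integral is comparable to a constant times $|t|^{-3/4}$'' is stated rather than proved (it does hold with $\lambda_0$-dependent constants, but only after the integration by parts you gesture at is actually carried out).
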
 
\begin{proof} Recalling the proof of Lemma~\ref{lem:t-34bound}, it suffices to establish
\begin{align*}
 &\|R^\pm(H_0, \lambda^4) VR_V(\lambda^4) V R^\pm(H_0, \lambda^4) \|_{L^1 \rightarrow L^{\infty}} \les \lambda^{-1} \\
& \| \partial_{\lambda} \{ R^\pm(H_0, \lambda^4) VR_V(\lambda^4) V R^\pm(H_0, \lambda^4) \} \|_{L^1 \rightarrow L^{\infty}} \les \lambda^{-2}
\end{align*}

Note that first by \eqref{est1}, and using that $L^\infty \subset L^{2,-\f32-}$, we have 
\begin{align}
\| [R^\pm (H_0, \lambda^4)] \|_{ L^1 \rightarrow L^{2,-\sigma} }= O_1(\lambda^{-1}), \,\,\,\ \sigma>3/2,
\end{align}
along with the dual estimate as an operator from $L^{2,\sigma}\to L^\infty$.
Hence, by Theorem~\ref{th:LAP} we have the following estimate
\begin{align*}
& \| [R^\pm (H_0, \lambda^4) VR_V(\lambda^4) V R^\pm (H_0, \lambda^4)] \|_{L^1 \rightarrow L^{\infty}} \\ 
&\les \|R^\pm (H_0, \lambda^4) \|_{ L^{2,\sigma} \rightarrow L^{\infty}} \| V R_V(\lambda^4) V \|_{ L^{2,- \sigma} \rightarrow L^{2,\sigma} } \| R^\pm (H_0, \lambda^4) \|_{ L^ {1} \rightarrow L^{2,-\sigma} } \les \lambda^{-1}
\end{align*}
for any $|V(x) | \les \la x \ra^{-2-}$.  In fact, one can show this term is smaller, though this bound is valid since $\lambda \gtrsim 1$.
Similarly, by \eqref{est1} and Theorem~\ref{th:LAP} with $z=\lambda^4$ one obtains
\begin{align*}
 \| \partial_{\lambda} \{R^\pm(H_0, \lambda^4) VR_V(\lambda^4) V R^\pm(H_0, \lambda^4) \}\|_{L^1 \rightarrow L^{\infty}} \les \lambda^{-2}
\end{align*}
for any $|V(x) | \les \la x \ra^{-3-}$. Here, we note that the extra decay on $V$ is needed when the derivative falls on the perturbed resolvent $R_V$ so that $V$ maps $L^{2,-\f32-}\to L^{2,\f32+}$.

\end{proof}

\section{Classification of threshold spectral subspaces}\label{sec:classification}
In this section  we establish the relationship between the
spectral subspaces $S_i L^2(\R^3)$ for $i=1,2,3$ and distributional solutions to $H\psi =0$.

\begin{lemma}\label{lem:esa1}
Assume $|v(x)| \les \la x \ra ^{-\frac{5}{2}-}$,  if $\phi \in S_1 L^2(\R^3) \setminus \{0\} $, then $\phi= Uv \psi $  where $\psi \in L^{\infty}$, $H\psi=0$ in distributional sense, and 
\be \label{eq:psi def}
	\psi= c_0 - G_0v \phi,\,\, \text{where} \,\,\ c_0= \frac{1}{ \| V\|_{L^1}} \la v,T \phi \ra .
\ee 
\end{lemma}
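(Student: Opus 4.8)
The plan is to unwind the definition of $S_1$ as the Riesz projection onto $\ker(QTQ)$ acting inside $QL^2$, and then to translate the algebraic identity $QTQ\phi = 0$ (together with $\phi = Q\phi$, i.e.\ $P\phi = 0$) into the statement that a suitable $\psi$ solves $H\psi = 0$. First I would observe that for $\phi \in S_1 L^2$ we have $\phi \in QL^2$, so $P\phi = 0$, which means $\langle v, \phi\rangle = 0$; and $QTQ\phi = 0$, so $TQ\phi = T\phi$ lies in $PL^2 = \mathrm{span}(v)$. Writing $T\phi = (U + vG_0v)\phi$, this says $U\phi + vG_0v\phi = \frac{1}{\|V\|_1}\langle v, T\phi\rangle\, v$; multiplying through by $U$ (recall $U = \mathrm{sign}(V)$ so $U^2$ is the identity on the support of $V$, and $\phi = v(\cdot)$-supported since $\phi \in \mathrm{Ran}(v\cdot)$-type space — more precisely $\phi$ is supported where $v\neq 0$ because $T - U$ is given by $vG_0v$ and one checks $\phi = -U vG_0 v\phi + c_0 Uv$) gives $\phi = Uv(c_0 - G_0 v\phi)$ with $c_0 = \frac{1}{\|V\|_1}\langle v, T\phi\rangle$. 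This is exactly $\phi = Uv\psi$ with $\psi = c_0 - G_0 v\phi$ as claimed.

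Next I would verify that $\psi \in L^\infty(\R^3)$. The constant $c_0$ is harmless, so the point is that $G_0 v\phi \in L^\infty$. Since $G_0(x,y) = -\frac{|x-y|}{8\pi}$ and $\langle v,\phi\rangle = 0$ (so the leading linear-in-$|x|$ growth cancels), I would write $G_0 v\phi(x) = -\frac{1}{8\pi}\int (|x-y| - |x|)\, v(y)\phi(y)\,dy$ and use $\big||x-y|-|x|\big| \le |y| \lesssim \langle y\rangle$ together with $v\phi \in L^1$ (which holds since $v \lesssim \langle x\rangle^{-5/2-}$ and $\phi \in L^2$, so $\langle y\rangle v(y)\phi(y) \in L^1$ by Cauchy–Schwarz provided $v\langle\cdot\rangle \in L^2$, i.e.\ $5/2$ of decay suffices). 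This gives a uniform bound $|G_0v\phi(x)| \lesssim \|v\langle\cdot\rangle\|_2 \|\phi\|_2 < \infty$, hence $\psi \in L^\infty$.

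Then I would check the distributional equation $H\psi = 0$. Apply $\Delta^2$ to $\psi = c_0 - G_0 v\phi$. Since $G_0 = (\Delta^2)^{-1}$ (noted right after \eqref{Gdef}), we get $\Delta^2 \psi = -v\phi$ in the distributional sense, and $\Delta^2 c_0 = 0$. On the other hand $V\psi = v\cdot(U v)\psi \cdot \mathrm{sign}$-bookkeeping: using $\phi = Uv\psi$ and $U^2 = 1$ on $\mathrm{supp}\,V$, we have $v\phi = v U v \psi$, and since $V = Uv^2$ with $U = \mathrm{sign}(V)$, $vUv\psi = V\psi$. Hence $H\psi = \Delta^2\psi + V\psi = -v\phi + v\phi = 0$.

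The main obstacle I anticipate is the bookkeeping at the start — justifying carefully that $\phi$ is supported in $\{v\neq 0\}$ and that one may legitimately multiply the identity $T\phi = c_0\|V\|_1^{-1}$-times-$v$ by $U$ to solve for $\phi$; this is where the precise definitions of $U$, $v$, and the fact that $\mathrm{Ran}(S_1) \subseteq \mathrm{Ran}\,Q \subseteq \overline{\mathrm{Ran}(v\cdot)}$ must be invoked, and where one must be a little careful that $U$ acts as an involution only modulo the zero set of $V$. The $L^\infty$ bound and the distributional computation are then routine given the earlier identities $G_0 = (\Delta^2)^{-1}$ and the orthogonality $\langle v,\phi\rangle = 0$.
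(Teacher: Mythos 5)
Your proposal is correct and follows essentially the same route as the paper: unwinding $QTQ\phi=0$ with $P\phi=0$ to get $\phi=Uv(c_0-G_0v\phi)$, using $\langle v,\phi\rangle=0$ and $\bigl||x-y|-|x|\bigr|\lesssim\langle y\rangle$ for the $L^\infty$ bound, and $\Delta^2 G_0v\phi=v\phi$ together with $V\psi=vUv\psi=v\phi$ for the distributional equation. The support/involution bookkeeping you flag is harmless under the standard convention $U^2=1$ a.e., which is what the paper implicitly uses.
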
  
\begin{proof} Assume $\phi \in S_1 L^2(\R^4)$, one has $QTQ\phi=Q(U+vG_0v)\phi=0$. Note that 
\begin{align*}
	0=Q(U+vG_0v)\phi & = (I-P)(U+vG_0 v)\phi \\
	& = U\phi + vG_0v\phi - PT\phi  \\ 
    &\implies  \phi = Uv ( -G_0v\phi + c_0)=Uv\psi\,\, \text{where} \,\, c_0= \frac{1}{ \| V\|_{L^1}} \la v,T \phi \ra.
\end{align*}
To show $ [\Delta^2+ V] \psi = [\Delta^2+ V]( -G_0v\phi + c_0)=0 $, notice that by differentiation under the integral sign 
$$\Delta^2 G_0v\phi=-\Delta\int \frac{1}{4\pi |x-y|} v(y)\phi(y)dy.$$
Since $ \frac{1}{4\pi |x-y|}$ is the Green's function for $-\Delta$, we have $ \Delta^2G_0 v\phi=v\phi$ in the sense of distributions.    
  Hence,
$$ [\Delta^2+ V] ( -G_0v\phi + c_0) = -v\phi + vUv\psi =0.
$$
Next, we show that $G_0v \phi \in L^{\infty}$. Noting that $S_1\leq Q$, we have $P \phi  =0$ and hence
\begin{align}\label{psibounded}
\Big|\int_{\R^3}  |x-y| v(y) \phi(y) dy \Big| = \Big|\int_{\R^3} [ |x-y| -|x| ] v(y) \phi(y) dy \Big|\les \int_{\R^3} \la y \ra |v(y) \phi(y)| dy < \infty.
\end{align} 
\end{proof}

The following lemma gives  further information for the function $\psi$ in Lemma~\ref{lem:esa1}.

\begin{lemma} \label{lem:psiexp2} Let $|v(x)| \les \la x \ra^{-\frac{11}4- }$. Let $\phi= Uv \psi \in S_1 L^2(\R^3) \setminus \{0\}$ as in Lemma~\ref{lem:esa1}. Then,
\be \label{eq:psi def2}
	\psi= c_0 +\sum_{i=1}^3c_i\frac{x_i}{\la x\ra}+ \sum_{1\leq i\leq j\leq 3} c_{ij} \frac{x_ix_j}{\la x\ra^3} 
	+ \widetilde\psi,  
\ee
where $c_0= \frac{1}{ \| V\|_{L^1}} \la v,T \phi \ra$ and $\widetilde\psi\in L^2\cap L^\infty$. 
Moreover, $\psi \in L^p$ for $3<p \leq \infty$ if and only if $PT\phi=0$ and $\int y  v(y)\phi(y)dy =0$. 

Furthermore, 
$\psi \in L^p$ for $2\leq p \leq \infty$ if and only if $PT\phi=0$,  $\int y  v(y)\phi(y)dy =0$, and $\int y_i y_j v(y)\phi(y)dy =0$, $1\leq i\leq j\leq 3$. 
\end{lemma}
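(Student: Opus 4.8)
The plan is to start from the representation furnished by Lemma~\ref{lem:esa1}, $\psi = c_0 - G_0 v\phi$, i.e.
\[
\psi(x) = c_0 + \f1{8\pi}\int_{\R^3} |x-y|\, v(y)\phi(y)\, dy ,
\]
and Taylor expand the kernel $|x-y|$ in the $y$ variable about $y=0$. Since $\phi = Uv\psi$ with $\psi\in L^\infty$ by Lemma~\ref{lem:esa1}, the hypothesis $|v(x)|\les \la x\ra^{-\f{11}4-}$ gives $|v(y)\phi(y)| = |V(y)||\psi(y)| \les \la y\ra^{-\f{11}2-}$, so all the moment integrals below converge; and $S_1\le Q$ forces $P\phi = 0$, i.e.\ $\int v\phi\, dy = 0$, which kills the leading $|x|$ term. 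Writing
\[
|x-y| = |x| - \f{x\cdot y}{|x|} + \f{|x|^2|y|^2-(x\cdot y)^2}{2|x|^3} + g_1(x,y),
\]
the linear term contributes $\sum_i c_i \f{x_i}{|x|}$ with $c_i = -\f1{8\pi}\int y_i v\phi\, dy$, and the quadratic term contributes $\f1{16\pi|x|^3}\sum_{i,j}x_ix_j A_{ij}$ with $A_{ij}=\int(|y|^2\delta_{ij}-y_iy_j)v\phi\,dy$; collecting $i=j$ and $i<j$ this is $\sum_{i\le j}c_{ij}\f{x_ix_j}{|x|^3}$. One then replaces $|x|$ by $\la x\ra$ in these terms: the differences $\f{x_i}{|x|}-\f{x_i}{\la x\ra}$ and $\f{x_ix_j}{|x|^3}-\f{x_ix_j}{\la x\ra^3}$ are $O(|x|^{-2})$ and $O(|x|^{-3})$ at infinity and bounded near the origin, hence lie in $L^2\cap L^\infty$ and can be absorbed into $\widetilde\psi$.

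The key estimate — and the main obstacle — is showing the genuine remainder $\widetilde\psi_0(x):=\f1{8\pi}\int g_1(x,y)v\phi\,dy$ lies in $L^2\cap L^\infty$. The naive route, bounding $|g_1(x,y)|\les |y|^3/|x|^2$ by third order Taylor, fails because $\int |y|^3 |v\phi|\,dy$ diverges for the available decay. Instead I would interpolate: for $|y|\le c_0|x|$ one has both $|g_1|\les |y|^2/|x|$ (each dropped term is $O(|y|^2/|x|)$) and $|g_1|\les |y|^3/|x|^2$, hence $|g_1(x,y)|\les |y|^{2+\mu}/|x|^{1+\mu}$ for every $\mu\in[0,1]$; for $|y|>c_0|x|$ the same bound follows directly from $|x-y|\le|x|+|y|$ and $|x|\les|y|$. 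Choosing $\mu$ with $\f12<\mu<\f12+\delta$, where $\delta>0$ is the gain in the decay hypothesis, makes $\int |y|^{2+\mu}|v\phi|\,dy<\infty$ while $|x|^{-1-\mu}$ is square integrable at infinity; combined with the continuity of $\psi$ (hence local boundedness of $\widetilde\psi$) this gives $\widetilde\psi\in L^2\cap L^\infty$ and completes \eqref{eq:psi def2}. This interpolation, and the exact powers it forces, is precisely why the weight $\f{11}4$ (equivalently $\f{11}2$ on $V$) appears.

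For the $L^p$ characterizations I would read off from the expansion the asymptotics $\psi(x) = \big(c_0+\sum_i c_i\omega_i\big) + \f1{|x|}\sum_{i,j}c_{ij}'\omega_i\omega_j + O(|x|^{-1-\mu})$ as $|x|\to\infty$, where $\omega=x/|x|$ and the quadratic form $\sum c_{ij}'\omega_i\omega_j$ records the $c_{ij}$. Note $PT\phi = c_0 v$ (by the definitions of $c_0$ and $P$), so $PT\phi=0\iff c_0=0$; and the $c_i$ are, up to a nonzero constant, the components of $\int y\, v\phi\,dy$. For the "only if'' half of the first characterization: if $\psi\in L^p$ for some $p<\infty$ and $c_0+\sum c_i\omega_i\not\equiv0$ on $S^2$, then $|\psi|\gtrsim1$ on a solid cone near infinity, where $\int|\psi|^p=\infty$, a contradiction; hence $c_0+\sum c_i\omega_i\equiv0$, giving $c_0=0$ (average over $S^2$) and all $c_i=0$. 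Conversely, if $c_0=0$ and all $c_i=0$ then $\psi = \sum_{i\le j}c_{ij}\f{x_ix_j}{\la x\ra^3}+\widetilde\psi$, and since $\f{x_ix_j}{\la x\ra^3}\in L^p(\R^3)$ exactly for $p>3$ while $\widetilde\psi\in L^2\cap L^\infty\subset L^p$ for all $p\ge 2$, we get $\psi\in L^p$ for $3<p\le\infty$. For the second characterization, $\psi\in L^2$ forces (via $L^2\cap L^\infty\subset L^4$ and the first part) $c_0=0$ and all $c_i=0$, hence $\sum_{i\le j}c_{ij}\f{x_ix_j}{\la x\ra^3}\in L^2$; its profile $\f1{|x|}\sum c_{ij}'\omega_i\omega_j$ lies in $L^2(\R^3)$ only if the quadratic form vanishes identically on $S^2$, which for a symmetric array forces every $c_{ij}=0$, equivalently — unravelling $A_{ij}=\delta_{ij}\int|y|^2v\phi-\int y_iy_j v\phi$ and taking a trace — $\int y_iy_j v\phi\,dy=0$ for all $i\le j$. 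The converse is immediate, since these vanishing conditions make $c_0$, every $c_i$, and every $c_{ij}$ zero, leaving $\psi=\widetilde\psi\in L^2\cap L^\infty$, hence $\psi\in L^p$ for all $2\le p\le\infty$.
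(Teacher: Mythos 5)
Your proposal is correct and follows essentially the same route as the paper: expand $|x-y|$ to second order in $y$ (the zeroth moment vanishing since $P\phi=0$), control the Taylor remainder by the interpolated bound $|y|^{2+\mu}/|x|^{1+\mu}$ with $\mu$ slightly above $\tfrac12$ — which is exactly the paper's $O(|y|^{\f52+}/|x|^{\f32+})$ estimate and the reason for the weight $\la x\ra^{-\f{11}4-}$ — and then read off the $L^p$ characterizations from the homogeneity of the degree-zero and degree-$(-1)$ angular terms, noting $PT\phi=c_0v$. The only cosmetic differences are that you package the quadratic moments as the matrix $A_{ij}$ and argue the necessity via a cone/pointwise-asymptotics argument, while the paper keeps the $\f{1}{2|x|}\int|y|^2v\phi$ term separate and argues via $L^p$ membership of the angular profile on $B(0,1)^c$; both are equivalent.
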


\begin{proof} Note that all the terms in the expansion and the function $\psi$ are in $L^\infty$, therefore it suffices to prove the claim for $|x|>1$.
Using Lemma~\ref{lem:esa1} and the fact  that $P\phi=0$, we write  
\begin{multline*} 
\psi(x)-c_0 = -\frac{1}{8\pi}\int_{\R^3}  |x-y|  [v\phi](y) dy  \\
=-\frac{1}{8\pi}\int_{\R^3} \Big( |x-y| - |x| + \frac{ x \cdot y} { |x| } + \frac{ |y|^2} { 2|x|} - \frac {(x \cdot y )^2}{|x|^3} \Big) [v\phi](y) dy\\
+\frac{1}{8\pi}\int_{\R^3} \Big(\frac{ x \cdot y} { |x| } + \frac{ |y|^2} { 2|x|} - \frac {(x \cdot y )^2}{|x|^3} \Big) [v\phi](y) dy=:\psi_1+\psi_2. 
\end{multline*}

We first claim that $\psi_1\chi_{B(0,1)^c}\in L^2\cap L^\infty$.
To prove this claim we first consider the case $ |y| <  |x|/2 $. In this case, by a Taylor  expansion  we have 
\begin{multline}
|x-y|= |x| \Big( 1 - \frac{ x \cdot y} {|x|^2}  + \frac{ |y|^2} { 2|x|^2} - \frac{1}{8} \Big(-\frac{ x \cdot y} { |x|^2}  + \frac{ |y|^2} { 2|x|^2} \Big)^2 \Big) +   O ( |y|^{3}/|x|^{2}) \\
= |x| - \frac{ x \cdot y} {|x|} + \frac{ |y|^2} { 2|x|} - \frac {(x \cdot y )^2}{8|x|^3} +  O \Big( \frac{|y|^{{\f 52}+}}{|x|^{{\f 32}+}} \Big)
\end{multline} 
Using this and the fact that $|v\phi|=|V\psi|\les v^2$ we have 
\begin{multline*}
	\Big|\int_{|y| < |x|/2} \Big( |x-y| - |x| + \frac{ x \cdot y} { |x| } + \frac{ |y|^2} { 2|x|} - \frac {(x \cdot y )^2}{8|x|^3} \Big) [v\phi](y) dy \Big| \\ 
	\les  \int_{|y| <  |x|/2}  \frac{|y|^{{\f 5 2}+}}{ |x|^{{\f 3 2}+}}  \la y\ra^{-\frac{11}2-} dy
	\les |x|^{-\f32-} \int_{\R^3} \la y\ra^{-3-}\, dy
	 \les |x|^{-{\f 32}-} 
\end{multline*}
which belongs to $L^2\cap L^\infty$ on $B(0,1)^c$. 

In the case  $ |y| >  |x|/2$, we have 
\begin{multline*}
\Big|\int_{|y| > |x|/2} \Big( |x-y| - |x| + \frac{ x \cdot y} { |x| } + \frac{ |y|^2} { 2|x|} - \frac {(x \cdot y )^2}{8|x|^3} \Big) [v\phi](y) dy\Big|\\
\les \int_{|y| > |x|/2} \Big( |y|+\frac{|y|^2}{|x|}\Big) |y|^{-\frac{11}2-} dy
\les |x|^{-\frac32-}, 
\end{multline*}
which yields the claim. 

Now note that for $|x|>1$
\begin{multline}\label{psi2exp}
8\pi \psi_2=  \sum_{i=1}^3 \frac{ x_i} { |x| } \int_{\R^3} y_i[v\phi](y) dy + \frac{ 1} { 2|x|}  \int_{\R^3} |y|^2 [v\phi](y) dy -\sum_{i,j=1}^3 \frac{x_ix_j}{|x|^3}\int_{\R^3} y_i y_j  [v\phi](y) dy. 
\end{multline}
This yields the expansion for $\psi$ since for $|x|>1$, $\frac{x_i}{|x|}-\frac{x_i}{\la x\ra}=O(|x|^{-2})$ and 
$\frac{x_ix_j}{|x|^3}-\frac{x_ix_j}{\la x\ra^3}=O(|x|^{-2} ).$

Noting that the second and third terms in \eqref{psi2exp} are in $L^p_{B(0,1)^c}$ for $3<p\leq \infty$, we see that $\psi\in L^p$, $3<p\leq \infty$, if and only if  
$$c_0+\frac1{8\pi}\sum_{i=1}^3 \frac{ x_i} { |x| } \int_{\R^3} y_i[v\phi](y) dy \in L^{3+}_{B(0,1)^c},$$
which is equivalent to $c_0=0$ and $\int_{\R^3} y_i[v\phi](y) dy=0,$ $i=1,2,3$. To obtain the final claim, to determine if $\psi\in L^2_{B(0,1)^c}$ we rewrite the last two terms in \eqref{psi2exp} as follows 
$$
\frac{1}{|x|^3}\Big(  \sum_{i=1}^3\big(\tfrac{|x|^2}2-x_i^2\big) \int_{\R^3} y_i^2 [v\phi](y) dy  -2\sum_{1\leq i<j\leq 3}  x_ix_j  \int_{\R^3} y_i y_j  [v\phi](y) dy\Big).
$$
Note that the term in the parentheses is a degree 2 polynomial in  $x$, and hence cannot be in $L^2$ unless all coefficients are zero, which implies the final claim. 
\end{proof}

The following lemma is the converse of Lemma~\ref{lem:esa1}.

\begin{lemma} Let $|v(x)| \les \la x \ra ^{-\frac{11}4-}$.   Assume that a nonzero function $ \psi\in L^\infty$   solves $H\psi=0$ in the sense of distributions. Then $\phi= Uv\psi \in S_1L^2$, and we have $\psi = c_0 - G_0 v \phi$, 
$c_0=\frac{1}{ \| V\|_{L^1}} \la v,T \phi \ra$. In particular, the expansion given in Lemma~\ref{lem:psiexp2} is valid.  
\end{lemma}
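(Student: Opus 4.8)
The plan is to establish the converse direction of Lemma~\ref{lem:esa1}: starting from a distributional solution $\psi \in L^\infty$ of $H\psi = 0$, I would show $\phi := Uv\psi \in S_1 L^2$ and recover the integral representation $\psi = c_0 - G_0 v\phi$. First I would check that $\phi = Uv\psi \in L^2$: since $|\psi| \les 1$ and $|v(x)| \les \la x\ra^{-\frac{11}4-}$, we have $|\phi(x)| \les \la x\ra^{-\frac{11}4-} \in L^2(\R^3)$ (indeed $\phi$ lies in weighted $L^2$ spaces, which matters for the error terms below). Next, from $H\psi = 0$ in the distributional sense, i.e. $\Delta^2 \psi = -V\psi = -v(U v \psi) = -v\phi$, I would apply $G_0 = (\Delta^2)^{-1}$, using that $G_0$ convolves against the fundamental solution of $\Delta^2$ (up to the constant in \eqref{adef}). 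The subtlety is that $\Delta^2$ has a nontrivial nullspace among polynomially growing distributions, so $\Delta^2(\psi + G_0 v\phi) = 0$ only tells us $\psi + G_0 v\phi$ is a polynomial; combined with $\psi \in L^\infty$ and the growth bound on $G_0 v\phi$ (which, by the argument in \eqref{psibounded}, grows at most linearly, or less once orthogonality is used), one concludes this polynomial is a constant, call it $c_0$, so $\psi = c_0 - G_0 v\phi$.

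The second main step is to verify the membership $\phi \in S_1 L^2$, which amounts to showing $QTQ\phi = 0$ and $P\phi = 0$ (equivalently $\phi \perp v$). The relation $P\phi = 0$ follows because $\la v, \phi\ra = \la v, Uv\psi\ra = \int V\psi\,dx$, and one shows this vanishes by pairing the distributional identity $\Delta^2 \psi = -v\phi$ against a suitable test function (or by examining the asymptotics: if $\la v,\phi\ra \ne 0$ then $G_0 v\phi$ grows linearly in $|x|$, contradicting $\psi \in L^\infty$ after one checks the linear term cannot be cancelled). Then, reversing the computation in the proof of Lemma~\ref{lem:esa1}: from $\psi = c_0 - G_0 v\phi$ and $\phi = Uv\psi$ we get $U\phi = v\psi = c_0 v - v G_0 v\phi$, hence $(U + vG_0 v)\phi = c_0 v$, which lies in $P L^2$; applying $Q = I - P$ gives $Q(U + vG_0 v)Q\phi = QTQ\phi = 0$ (using $Q\phi = \phi$). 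Thus $\phi \in \ker(QTQ) = S_1 L^2$. Finally, $c_0$ is identified by pairing $(U+vG_0v)\phi = c_0 v$ with $v$: $\la v, T\phi\ra = \la v, (U+vG_0v)\phi\ra = c_0 \|v\|_2^2 = c_0 \|V\|_{L^1}$, giving the stated formula. The conclusion about the expansion in Lemma~\ref{lem:psiexp2} is then immediate since $\phi \in S_1 L^2$ and the decay hypothesis $|v(x)| \les \la x\ra^{-\frac{11}4-}$ matches exactly.

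The step I expect to be the main obstacle is the careful justification that $\psi + G_0 v\phi$ is not merely a tempered distribution annihilated by $\Delta^2$ but specifically a constant. One must rule out linear, quadratic, and cubic harmonic-type polynomial growth: $\psi$ being bounded kills any genuine polynomial growth in $\psi + G_0 v\phi$, but one needs the a priori bound that $G_0 v\phi$ itself does not grow faster than, say, linearly — which uses $\phi \in L^{2,\sigma}$ for appropriate $\sigma$ coming from the decay of $v$ — and then the fact that a polynomial bounded by a constant plus a linearly growing function, while being globally the difference $\psi - (\text{bounded growth})$... more precisely, one argues that $P_{\text{poly}} = \psi + G_0 v\phi$ satisfies $|P_{\text{poly}}(x)| \les \la x\ra$, forcing $P_{\text{poly}}$ to be affine, and then the orthogonality $P\phi = 0$ upgrades the bound on $G_0 v\phi$ to $o(|x|)$ (as in \eqref{psibounded}), forcing $P_{\text{poly}}$ constant. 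There is a mild circularity to untangle here — $P\phi = 0$ is used to control $G_0 v\phi$, but one also wants $P\phi = 0$ as part of the conclusion — so I would first establish $P\phi = 0$ independently (via the pairing/test-function argument against the PDE, which does not require knowing $\psi = c_0 - G_0 v\phi$ yet), and only afterward deduce the constant representation. Everything else is routine bookkeeping with the decay exponents, parallel to the proof of Lemma~\ref{lem:esa1}.
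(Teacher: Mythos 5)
Your proposal is correct and follows essentially the same route as the paper: establish $\la v,\phi\ra=0$ first by pairing the equation $\Delta^2\psi=-V\psi$ against rescaled cutoffs $\eta(\delta x)$ (so $\|\Delta^2\eta_\delta\|_{L^1}\les\delta$), use this orthogonality as in \eqref{psibounded} to see $G_0v\phi$ is bounded, invoke Liouville for biharmonic functions to get $\psi+G_0v\phi=c_0$, identify $c_0$ from $T\phi=c_0v$, and then read off $QTQ\phi=0$. Your resolution of the apparent circularity (orthogonality first, representation second) is exactly the paper's ordering, so no gap remains.
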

\begin{proof}
Let $ \psi\in L^\infty$   be a solution of  $H\psi=0$, or equivalently $- \Delta^2 \psi = V \psi$. We first show that for $\phi= Uv\psi\in QL^2$, namely  
$$
\int_{\R^3} v(x) \phi(x) dx =0.
$$
Note that $v\phi=V\psi \in L^1$.  Let $\eta(x)$ be a smooth cutoff function with $\eta(x)=1$ for all $|x| \leq 1$. For  $\delta>0$, let $\eta_\delta(x)=\eta(\delta x)$. We have 
$$
	|\la v \phi  , \eta_\delta \ra|= |\la  V\psi,  \eta_\delta  \ra| =| \la   \Delta^2 \psi ,  \eta_\delta  \ra| =| \la    \psi ,  \Delta^2 \eta_\delta  \ra|   \leq  \|\psi\|_{L^\infty} \| \Delta^2 \eta_\delta\|_{L^1} \les \delta.  
$$
Therefore, taking $\delta\to 0$ and using   the dominated convergence theorem we conclude that $\la v ,\phi \ra=0$.

Moreover, let $ \tilde{\psi} = \psi+ G_0v\phi$, then by assumption and \eqref{psibounded}, $ \tilde{\psi}$ is bounded and $\Delta^2 \tilde{\psi}=0$. By Liouville's theorem for biharmonic functions on $\R^n$, $  \tilde{\psi} =c $.   This implies that   $\psi= c- G_0v\phi$. Since 
$$
0=H \psi=[ \Delta^2 +V] \psi = Vc-Uv(U+v G_0v)\phi \Rightarrow v^2c= vT\phi,  
$$
we have  $c=c_0= \frac{1}{ \| V\|_{1}} \la v,T \phi \ra$.
 Lastly notice that, 
\begin{multline*}
	Q(U+vG_0v) Q \phi = Q(U+vG_0v) \phi = Q (U \phi + vG_0v \phi )\\
	= Q (U \phi -v \psi +c_0v)
	=Q(c_0v) = 0, 
\end{multline*}
hence $\phi \in S_1L^2$ as claimed. 
\end{proof}

Let $T_1= S_1 TPTS_1 - \frac{\|V\|_1}{3 (8\pi)^2} S_1 v G_1 v S_1 $, and $S_2$ be the Riesz projection on the the kernel of $T_1$.  Moreover, let $S^\prime_2$ be the Riesz projection on the the kernel of $S_1 TPTS_1$ and $S^{\prime \prime} _2$ be the Riesz projection on the the kernel of $ S_1 v G_1 v S_1$. 

\begin{lemma}   \label{lem:S_2} Let $|v(x)| \les \la x \ra ^{-{\f {11}4}-}$. Then, $S_2 L^2 = S^\prime_2L^2 \cap S^{\prime \prime} _2 L^2$. Moreover $\int yv(y)S_2\phi(y)dy=0$ and $PTS_2=QvG_1vS_2=S_2vG_1vQ=0$. 
Finally, $\phi=Uv\psi\in S_1 L^2$ belongs to $S_2L^2$ if and only if $\psi\in L^p$, $p>3$.  
\end{lemma}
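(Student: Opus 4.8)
The plan is to realize $T_1$ as a sum of two positive semi-definite operators on $S_1L^2$ and then to read off every assertion from the resulting description of its kernel. I work throughout inside $S_1L^2$, using that $P\phi=0$ (hence $\int v\phi=0$) for $\phi\in S_1L^2$, and that $S_1v=0$ since $S_1\leq Q$ and $Qv=0$.

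First I would verify positivity. On $L^2$ one has $\la TPTu,u\ra=\la PTu,Tu\ra=\|PTu\|_2^2\geq0$, so $S_1TPTS_1\geq0$ and
$$
\la S_1TPTS_1\phi,\phi\ra=\|PT\phi\|_2^2,\qquad \phi\in S_1L^2 .
$$
For the other piece, writing $G_1(x,y)=|x|^2-2x\cdot y+|y|^2$ and using $\int v\phi=0$, all but the cross term drops out and
$$
\la vG_1v\phi,\phi\ra=-2\sum_{j=1}^3\Big|\int x_jv(x)\phi(x)\,dx\Big|^2\leq0,\qquad \phi\in S_1L^2 ,
$$
so $-\frac{\|V\|_1}{3(8\pi)^2}S_1vG_1vS_1\geq0$. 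Therefore $T_1\geq0$ on $S_1L^2$, and since the kernel of a sum of positive semi-definite operators is the intersection of the kernels (if $(A+B)\phi=0$ with $A,B\geq0$ then $\la A\phi,\phi\ra=\la B\phi,\phi\ra=0$, whence $A\phi=B\phi=0$), we obtain $S_2L^2=\ker T_1=\ker(S_1TPTS_1)\cap\ker(S_1vG_1vS_1)=S^\prime_2L^2\cap S^{\prime\prime}_2L^2$, the first claim.

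Next I would make the two kernels explicit via the quadratic-form identities above. From $\|PT\phi\|_2^2=0$ and, conversely, $S_1TPTS_1\phi=S_1T(PT\phi)$, one gets $S^\prime_2L^2=\{\phi\in S_1L^2: PT\phi=0\}$. From the second display and, conversely, the observation that when $\int v\phi=0$ and $\int x_jv\phi=0$ the function $G_1v\phi\equiv\int|y|^2v(y)\phi(y)\,dy$ is constant, so that $vG_1v\phi\in\mathrm{span}\{v\}$ and $S_1vG_1vS_1\phi=S_1vG_1v\phi=0$ (using $S_1v=0$), one gets $S^{\prime\prime}_2L^2=\{\phi\in S_1L^2:\int x_jv(x)\phi(x)\,dx=0,\ j=1,2,3\}$. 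Intersecting,
$$
S_2L^2=\big\{\phi\in S_1L^2:\ PT\phi=0,\ \textstyle\int y\,v(y)\phi(y)\,dy=0\big\},
$$
which gives $PTS_2=0$ and $\int y\,v(y)S_2\phi(y)\,dy=0$. For the last identity in the displayed list, for $\phi\in S_2L^2$ the same computation gives $vG_1v\phi\in\mathrm{span}\{v\}=PL^2$, hence $QvG_1vS_2=0$; taking adjoints ($Q$, $vG_1v$, $S_2$ all self-adjoint) yields $S_2vG_1vQ=0$.

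Finally, the last equivalence drops out by comparing the displayed description of $S_2L^2$ with Lemma~\ref{lem:psiexp2}, which states that for $\phi=Uv\psi\in S_1L^2$ one has $\psi\in L^p$, $3<p\leq\infty$, precisely when $PT\phi=0$ and $\int y\,v(y)\phi(y)\,dy=0$ --- exactly the conditions defining $S_2L^2$. The one genuinely delicate point is the sign bookkeeping behind $T_1\geq0$: that $\int v\phi=0$ (valid because $S_1L^2\subset QL^2$) annihilates all but the cross term in $\la vG_1v\phi,\phi\ra$, and that the constant $\frac{\|V\|_1}{3(8\pi)^2}$ in \eqref{F def} enters with the sign making the second summand positive. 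Everything else reduces to the elementary expansion of $G_1$, the orthogonality $S_1v=0$, and the standard kernel-of-a-sum fact.
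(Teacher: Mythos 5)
Your proposal is correct and follows essentially the same route as the paper: the same quadratic-form identities $\la S_1TPTS_1\phi,\phi\ra=\|PT\phi\|_2^2$ and $\la S_1vG_1vS_1\phi,\phi\ra=-2\big|\int y\,v(y)\phi(y)\,dy\big|^2$ (using $S_1v=0$ and the expansion of $G_1$), positivity of the two summands of $T_1$ to split its kernel, the observation that $G_1v\phi$ reduces to a constant multiple of $v$ on $S_2L^2$ to get $QvG_1vS_2=S_2vG_1vQ=0$, and Lemma~\ref{lem:psiexp2} for the $L^p$ characterization. The only cosmetic difference is that you package the kernel identification as the standard kernel-of-a-sum-of-positive-semi-definite-operators fact, whereas the paper argues the inclusion directly and notes the converse is trivial.
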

\begin{proof}  It suffices to prove that $S_2 L^2 \subset S^\prime_2L^2 \cap S^{\prime \prime} _2 L^2$ since   reverse inclusion holds trivially.
Let $\phi \in S_1 L^2$. We have
\begin{align} \label{c_0} 
	 \la S_1T PT S_1\phi , \phi \ra = \la PT  \phi, PT  \phi \ra = \| PT  \phi \|_2^2. 
\end{align} On the other hand,  since $S_1v=0$ and $x,y$ and $v$ are real, we have
\begin{align} 
	\la S_1v G_1 v S_1\phi , \phi \ra =& \int_{\R^6} \phi (x) v (x) |x-y|^2  \overline{v(y) \phi(y)} dy dx \nn \\
    = & \int_{\R^6} \phi (x) v (x) [ |x|^2 - 2 x \cdot y - |y|^2 ] \overline{ v(y)\phi(y)} dy dx \nn\\ 
     = & -2\int_{\R^6} \phi (x) v (x)  x \cdot   \overline{ y v(y)\phi(y)} dy dx 
    = -2 \Big| \int_{\R^3} y v(y) \phi(y) dy \Big|^2 \label{S1vG1vS1} 
\end{align} 
Hence, if $\phi \in S_2 L^2$ then we  have 
$$
0 = \la T_1 \phi , \phi \ra =   \la T PT \phi , \phi \ra - \frac{\|V\|_1}{3 (8\pi)^2}  \la v G_1 v \phi , \phi \ra  =  \| PT  \phi \|_2^2+\frac{2\|V\|_1}{3 (8\pi)^2}  \Big| \int_{\R^3} y v(y) \phi(y) dy \Big|^2.
$$
Therefore,  
$$ 
 || PT \phi \|_2  =  \Big| \int_{\R^3} y v(y) \phi(y) dy \Big|=0,
$$ which yields the claim. 
 
This also implies that $\int yv(y)S_2\phi(y)dy=PTS_2=0$ and   
 $$
 QvG_1vS_2\phi=-2Q v(x)x\cdot \int y v(y)S_2\phi(y)dy=0.
 $$
 
Finally, by Lemma~\ref{lem:psiexp2}, $\psi\in L^p$, $3<p\leq \infty$ if and only if $PT \phi=\int yv(y) \phi(y)dy=0$, which is equivalent to $\phi \in S_2 L^2$ by the argument above.
\end{proof}

Define $S_3$ the projection on to the kernel of $T_2=S_2vG_3 vS_2+\frac{10}{3 }  S_2vWvS_2$, where $W(x,y)=|x|^2|y|^2$. Note that the kernel of $G_3$ is 
\begin{align}\label{exp4-1} 
|x-y|^4= |x|^4 + |y|^4 - 4x \cdot y |y|^2 - 4 y \cdot x |x|^2 + 2 |x|^2 |y|^2 + 4 ( x \cdot y )^2.  
\end{align}
Since $S_2x_jv=S_2v=0$,  all but the final two terms contribute zero to $S_2 vG_3v S_2$. Therefore the   kernel of $T_2$ (as an operator on $S_2L^2$) is
\be\label{T2ker}
T_2(x,y)=v(x)\big[\frac{26}{3 }|x|^2 |y|^2 + 4 ( x \cdot y )^2\big]v(y).
\ee

\begin{lemma}\label{lem:S_3} Let $|v(x)| \les \la x \ra ^{- 4-}$. Fix $\phi = Uv \psi \in S_2L^2$. Then $\phi\in S_3L^2 $ if and only if $\psi \in L^p $, for all $2 \leq p \leq \infty$. Moreover the kernel of 
$T_2$ agrees with the kernel of $S_2vG_3vS_2$.
\end{lemma}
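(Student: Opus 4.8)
The plan is to establish the two assertions separately, handling the ``\emph{moreover}'' statement first since it feeds into the main equivalence. Since $S_3$ is by definition the Riesz projection onto the kernel of $T_2$ on $S_2L^2$, it suffices to show $\ker T_2=\ker\big(S_2vG_3vS_2\big)$ as subspaces of $S_2L^2$. Using \eqref{exp4-1} together with $S_2v=S_2x_jv=0$ from Lemma~\ref{lem:S_2} — exactly as already recorded in \eqref{T2ker} and the line preceding it — the operator $S_2vG_3vS_2$ has effective kernel $v(x)\big[2|x|^2|y|^2+4(x\cdot y)^2\big]v(y)$, while $T_2$ has effective kernel $v(x)\big[c\,|x|^2|y|^2+4(x\cdot y)^2\big]v(y)$ for a positive constant $c$. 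Writing $g=v\phi$ and expanding $(x\cdot y)^2=\sum_{j,k}x_jx_ky_jy_k$, for $\phi\in S_2L^2$ one gets
\[
\la S_2vG_3vS_2\,\phi,\phi\ra=2\Big|\int_{\R^3}|x|^2g(x)\,dx\Big|^2+4\sum_{j,k=1}^3\Big|\int_{\R^3}x_jx_kg(x)\,dx\Big|^2,
\]
and the same identity with $2$ replaced by $c$ for $T_2$; in particular both quadratic forms are nonnegative. Because $\int|x|^2g=\sum_j\int x_j^2g$, each of these forms vanishes exactly when $\int x_jx_kv(x)\phi(x)\,dx=0$ for all $1\le j,k\le3$, a condition that does not see the value of the positive constant. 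Hence $\ker T_2=\ker\big(S_2vG_3vS_2\big)$, which is the ``\emph{moreover}'' claim, and along the way we have identified $S_3L^2=\{\phi\in S_2L^2:\ \int_{\R^3}x_jx_kv(x)\phi(x)\,dx=0,\ 1\le j,k\le3\}$.

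For the equivalence I would feed this description of $S_3L^2$ into the $L^p$ characterization from Lemma~\ref{lem:psiexp2}, which applies since $4>\tfrac{11}4$. That lemma gives $\psi\in L^p$ for all $2\le p\le\infty$ if and only if $PT\phi=0$, $\int yv(y)\phi(y)\,dy=0$, and $\int y_iy_jv(y)\phi(y)\,dy=0$ for all $1\le i\le j\le3$. But $\phi=Uv\psi\in S_2L^2$, and Lemma~\ref{lem:S_2} already yields $PTS_2=0$ and $\int yv(y)S_2\phi(y)\,dy=0$, so the first two conditions hold automatically. Thus $\psi\in\bigcap_{2\le p\le\infty}L^p$ is equivalent to the single requirement $\int y_iy_jv(y)\phi(y)\,dy=0$ for $1\le i\le j\le3$, which by the symmetry $y_iy_j=y_jy_i$ is the same as $\int x_jx_kv(x)\phi(x)\,dx=0$ for all $1\le j,k\le3$; by the previous paragraph this is precisely the statement $\phi\in S_3L^2$. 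The decay hypothesis $|v(x)|\les\la x\ra^{-4-}$ makes all the moments above converge, since $|v\phi|=|V\psi|\les v^2\les\la x\ra^{-8-}$ while $|x_jx_k|\les\la x\ra^2$, and it is more than enough to invoke Lemmas~\ref{lem:S_2} and~\ref{lem:psiexp2}.

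The underlying computations are routine; the only thing requiring genuine care is the bookkeeping of which vanishing moments are forced automatically by $\phi\in S_2L^2$ (via Lemma~\ref{lem:S_2}) versus which must still be imposed, so that the list of conditions characterizing $\psi\in\bigcap_pL^p$ collapses exactly to the list of conditions defining $\ker T_2$. Reconciling the index set $1\le i\le j\le3$ in Lemma~\ref{lem:psiexp2} with the full index set $1\le j,k\le3$ in the quadratic form for $T_2$ is harmless because $x_jx_k$ is symmetric, but it is the one cosmetic point one should not skip over.
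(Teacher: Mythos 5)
Your proposal is correct and takes essentially the same route as the paper: you expand the quadratic forms of $T_2$ and $S_2vG_3vS_2$ on $S_2L^2$ using the vanishing moments from Lemma~\ref{lem:S_2}, use positive semi-definiteness to identify $S_3L^2$ with the vanishing of the second moments $\int y_iy_j v(y)\phi(y)\,dy$, and then conclude via Lemma~\ref{lem:psiexp2}, with the first two conditions there holding automatically on $S_2L^2$. Treating the coefficient of $|x|^2|y|^2$ as a generic positive constant rather than the explicit value in \eqref{T2ker} is a harmless (indeed prudent) cosmetic difference.
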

\begin{proof} Using \eqref{T2ker} for $\phi\in S_2L^2$ , we have
$$
\la T_2\phi,\phi\ra = \frac{26}{3 }\bigg|\int_{\R^3} |y|^2 v(y) \phi(y)\, dy \bigg|^2 + 4 \sum_{i,j=1}^3 \bigg| \int y_i y_j v(y) \phi(y)\, dy \bigg| ^2.
$$
In particular, $T_2$ is positive semi-definite. Therefore  $\phi\in S_3L^2$, if and only if $\la T_2 \phi,\phi\ra=0$, which by the  calculation above equivalent to $\int y_i y_j v(y) \phi(y)\, dy=0$ for all $i, j$.
The claim now follows from Lemma~\ref{lem:psiexp2}.

The claim for $S_2 vG_3v S_2$ also follows from this since by the calculation before the lemma its  kernel is  $v(x)\big[2|x|^2 |y|^2 + 4 ( x \cdot y )^2\big]v(y)$.
 \end{proof}

\begin{lemma}  \label{invertible}Let $|v(x)| \les \la x \ra ^{- 4-}$. Then the kernel of the operator $S_3 v G_4 v S_3$ on $S_3L^2$ is trivial. 
\end{lemma}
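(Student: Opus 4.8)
The plan is to prove $\ker T_3=\{0\}$ on $S_3L^2$; since $S_3\le S_1$ and $S_1$ is finite rank, $S_3L^2$ is finite dimensional, so this is equivalent to invertibility of $T_3=S_3vG_4vS_3$. Fix $\phi\in S_3L^2$ with $T_3\phi=0$; the goal is $\phi=0$. Since $S_3\le S_1$, Lemma~\ref{lem:esa1} gives $\phi=Uv\psi$ with $\psi=c_0-G_0v\phi\in L^\infty$ and $H\psi=0$; because $\phi\in S_2L^2$, Lemma~\ref{lem:S_2} gives $PT\phi=0$, hence $c_0=\|V\|_1^{-1}\la v,T\phi\ra=0$ and $\psi=-G_0v\phi$, and by Lemma~\ref{lem:S_3}, $\psi\in L^2$. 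Moreover $\phi\in S_3L^2$ forces $S_3v=S_3x_jv=S_3x_ix_jv=0$ (Lemmas~\ref{lem:S_2} and \ref{lem:S_3}), so $f:=v\phi$ has vanishing moments $\int f\,dy=\int y_jf\,dy=\int y_iy_jf\,dy=0$, and since $|v|\les\la x\ra^{-4-}$ and $\psi\in L^\infty$ we also get the pointwise bound $|f(x)|\les\la x\ra^{-8-}$.

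Pairing $T_3\phi=0$ against $\phi$ and using $\phi=S_3\phi$ (and that $G_4,v,S_3$ are real, so $\phi$ may be taken real), $0=\la T_3\phi,\phi\ra=\la G_4v\phi,v\phi\ra=\la w,f\ra$, where $w:=G_4v\phi=-\f1{4\pi\cdot6!}\int|\cdot-y|^5f(y)\,dy$; this integral converges since $\la y\ra^5|f(y)|\les\la y\ra^{-3-}\in L^1(\R^3)$. From the elementary identity $\Delta_x^2|x-y|^5=360|x-y|$ and the constants in \eqref{Gdef} one computes $\Delta^2w=-\psi$; and from $H\psi=0$ one has $\Delta^2\psi=-V\psi=-f$. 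Hence $\la w,f\ra=-\la w,\Delta^2\psi\ra$, and it remains to integrate by parts twice to obtain $-\la w,\Delta^2\psi\ra=-\la\Delta^2w,\psi\ra=\la\psi,\psi\ra=\|\psi\|_{L^2}^2$, whence $\|\psi\|_{L^2}^2=\la T_3\phi,\phi\ra=0$, so $\psi=0$ and $\phi=Uv\psi=0$.

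The main work is justifying this integration by parts, i.e.\ showing the boundary terms in Green's second identity for $\Delta^2$ on $B(0,R)$ vanish as $R\to\infty$; these are the surface integrals of $w\,\partial_n\Delta\psi$, $\Delta\psi\,\partial_nw$, $\Delta w\,\partial_n\psi$, $\psi\,\partial_n\Delta w$. Expanding the kernels $|x-y|^5$, $\Delta_x|x-y|^5=30|x-y|^3$, and $\nabla_x\Delta_x|x-y|^5$ in powers of $y$ and invoking the vanishing moments of $f$ (as in the proof of Lemma~\ref{lem:psiexp2}) gives $w=O(\la x\ra^2)$, $\nabla w=O(\la x\ra)$, $\Delta w=O(1)$, $\nabla\Delta w=O(\la x\ra^{-1})$; likewise, writing $\psi=-G_0v\phi$ and $\Delta\psi=\f1{4\pi}\int|\cdot-y|^{-1}f(y)\,dy$ and using the same moment conditions gives $\psi=O(\la x\ra^{-\f32-})$, $\nabla\psi=O(\la x\ra^{-\f52-})$, $\Delta\psi=O(\la x\ra^{-4})$, $\nabla\Delta\psi=O(\la x\ra^{-5})$. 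Each of the four boundary integrands is then $O(\la R\ra^{-\f52-})$ or smaller, and against the surface measure ($\sim R^2$) it tends to $0$. Finally one checks $\int_{B(0,R)}wf\,dx\to\la w,f\ra$ and $\int_{B(0,R)}\psi^2\,dx\to\|\psi\|_{L^2}^2$, which hold since $|wf|\les\la x\ra^{-6-}\in L^1$ and $\psi\in L^2$.

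I expect this boundary-term bookkeeping to be the main obstacle. An alternative that avoids it is to argue on the Fourier side: $\la G_4v\phi,v\phi\ra$ equals a positive constant times $\int_{\R^3}|\xi|^{-8}|\widehat{v\phi}(\xi)|^2\,d\xi$, the integral being finite because the moment conditions together with $|v\phi(x)|\les\la x\ra^{-8-}$ force $\widehat{v\phi}(\xi)=O(|\xi|^3)$ near the origin, and the sign being fixed by the (negative) sign of the generalized Fourier transform of $|x|^5$ on $\R^3$; then $\la T_3\phi,\phi\ra=0$ forces $\widehat{v\phi}\equiv0$, i.e.\ $\phi=0$.
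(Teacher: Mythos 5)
Your argument is correct, but it takes a genuinely different route from the paper. The paper never integrates by parts in physical space: using the orthogonality relations defining $S_3$ (which kill the $\lambda^{-1}$, $G_1$ and $G_3$ terms), it writes $\la G_4v\phi,v\phi\ra$ as the limit as $\lambda\to0$ of $\big\la \lambda^{-4}\big(R(H_0;-\lambda^4)-G_0\big)v\phi,v\phi\big\ra$, i.e. as a limit of resolvents at energies $-\lambda^4$ below the spectrum, and evaluates this limit on the Fourier side by monotone convergence, obtaining a nonzero multiple of $\int |\xi|^{-8}|\widehat{v\phi}(\xi)|^2\,d\xi$; vanishing of the quadratic form then forces $v\phi=0$ and hence $\phi=0$. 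You instead stay in physical space: from $\Delta^2|x-y|^5=360|x-y|$ you get $\Delta^2(G_4v\phi)=G_0v\phi=-\psi$, while $H\psi=0$ gives $\Delta^2\psi=-v\phi$, and two applications of Green's identity yield $\la T_3\phi,\phi\ra=\|\psi\|_{L^2}^2$, the boundary terms on $\partial B(0,R)$ vanishing precisely because of the moment conditions $\int v\phi=\int y_j v\phi=\int y_iy_j v\phi=0$ encoded in $\phi\in S_3L^2$; your growth/decay rates for $w,\nabla w,\Delta w,\nabla\Delta w$ and for $\psi,\nabla\psi,\Delta\psi,\nabla\Delta\psi$ are the right ones, and $|v\phi|=|V\psi|\les\la x\ra^{-8-}$ makes the third moments finite, so the bookkeeping closes. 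What the paper's route buys is that the limiting-resolvent/monotone-convergence device sidesteps all boundary terms and any regularization issues; what your route buys is the explicit identity $\la S_3vG_4vS_3\phi,\phi\ra=\|G_0v\phi\|_{L^2}^2$ with the sign pinned down — and your sign is in fact the consistent one, since the coefficient of $G_4$ in the expansion of $R(H_0;-\lambda^4)$ is $-1$ (the spectral parameter there is $-\lambda^4$), so the limit in \eqref{G5toG0lim} equals $-\la G_4v\phi,v\phi\ra$ and the form is $+\la G_0v\phi,G_0v\phi\ra$ rather than the sign recorded in Remark~\ref{G5toG0inuse}; this is immaterial for the lemma, where only definiteness is used. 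Your alternative Fourier sketch is essentially the paper's formula, but as written it needs the finite-part regularization of $|\xi|^{-8}$ (not locally integrable in $\R^3$), which the paper's limiting argument — and your main argument — avoid.
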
 
\begin{proof} Take $\phi$ in the kernel of $S_3 v G_4 v S_3$. Using \eqref{RH_0 rep}, we have (for $0<\lambda<1$)
$$
R (H_0;- \lambda^4)=\frac1{2i\lambda^2}\big[R_0(i\lambda^2)-R_0(-i\lambda^2)\big]= \frac{e^{i\sqrt i \lambda|x-y|} -e^{i\sqrt{-i} \lambda|x-y|} }{8\pi i \lambda^2|x-y|}.
$$
By an expansion similar to   \eqref{eq:R0low}, and the proof of Lemma~\ref{lem:M_exp}, we have for $0<\lambda<1$  and for all $|x-y|$, 
$$
	R (H_0; -\lambda^4)= \frac{a_0}{\lambda}  + G_0  + a_1  \lambda G_1 + a_3 \lambda^3 G_3 + a_4 \lambda^4 G_4 +  O (|\lambda|^{4+} |x-y|^{5+}),$$ 
where $a_0,a_1,a_3,a_4\in \C$ are constants. Notice that since $\phi \in S_3L^2$ one has $0 = \la v,\phi \ra = \la  G_1 v \phi, v\phi \ra=\la G_3 v \phi, v\phi \ra $. Also note that since $v\phi=V\psi$, we have 
$$
\iint |x-y|^{5+} v(x)v(y)|\phi(x)\phi(y)|dx dy\les \iint |x-y|^{5+} \la x\ra^{-8-} \la y\ra^{-8-} dx dy<\infty.
$$
Therefore  
\begin{align} \label{G5toG0}
	0&= \la S_3 v G_4 v \phi, \phi \ra = \la G_4 v \phi, v\phi \ra \\
	&=  \lim_{ \lambda \to 0 }  \Big\la \frac{R (H_0; - \lambda^4)- a_0 \lambda^{-1} - G_0 - a_1 \lambda G_1- a_3 \lambda^3 G_3}{\lambda^4} v\phi, v\phi\Big\ra \nn \\ 
	& = \lim_{ \lambda  \to 0  } \Big\la \frac{R (H_0; -\lambda^4)- G_0}{\lambda^4} v\phi, v\phi \Big\ra \nn.
\end{align}
Further, recalling that $G_0=[ \Delta^2]^{-1}$ and considering the Fourier domain, one has  
\begin{multline} \label{G5toG0lim}
	0= \lim_{ \lambda  \to 0 } \Big\la \frac{R(H_0;- \lambda^4)- G_0}{\lambda^4}v\phi, v\phi  \Big\ra \\ = \lim_{\lambda \to 0} \frac{1}{\lambda^4} \Big\la \Big( \frac{1}{ 8 \pi^2 \xi^4 + \lambda^4} - \frac{1}{ 8 \pi^2 \xi^4} \Big) \widehat{v\phi}(\xi), \widehat{v\phi}(\xi) \Big\ra \\ 
	= \lim_{ \lambda  \to 0} \int_{\R^3} \frac{-1}{(8 \pi^2 \xi^4 + \lambda^4) 8 \pi^2 \xi^4} |\widehat{v\phi}(\xi)|^2 d \xi = \frac{-1}{  64 \pi^4} \int_{\R^3} \frac{|\widehat{v\phi}(\xi)|^2 }{ \xi^8} d \xi.
\end{multline}
Where we used the Monotone Convergence Theorem in the last step.

Note that this gives $v\phi=0$ since $v\phi \in L^1$. Also noting that the support of $\phi=Uv\psi$ is a subset of the support of $v$, we have   $\phi=0$. This establishes the invertibility of  $S_3 v G_4 v S_3$ on $S_3L^2$.
\end{proof}
\begin{rmk} \label{G5toG0inuse}
Notice that, \eqref{G5toG0} and \eqref{G5toG0lim} imply that for any $\phi \in S_3$ one has 
\begin{align*}
\la S_3 v G_4 v \phi, \phi \ra = \frac{-1}{ 64 \pi^4} \int_{\R^3} \big\la \tfrac{\widehat{v\phi}(\xi) }{ \xi^4},  \tfrac{\widehat{v \phi}(\xi) }{ \xi^4} \big\ra= -\la G_0v \phi, G_0v \phi \ra
\end{align*}
provided $|v(x)| \les \la x \ra ^{- 4-}$.
\end{rmk}
\begin{lemma} The operator $ P_0:=G_0 v S_3 [ S_3 v G_4 v S_3]^{-1} S_3 v G_0$  is the orthogonal projection on $L^2$ onto the zero energy eigenspace of $H = \Delta^2 + V$. 
\end{lemma}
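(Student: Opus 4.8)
The plan is to recognize $P_0$ as an operator of the familiar form $B(B^{\ast}B)^{-1}B^{\ast}$, where $B:=G_0 v S_3$ is viewed as a finite-rank operator from $S_3 L^2(\R^3)$ into $L^2(\R^3)$, to identify its range with the zero-energy eigenspace $W:=\{\psi\in L^2(\R^3):H\psi=0\}$, and to use Remark~\ref{G5toG0inuse} to see that the middle factor $[S_3 v G_4 v S_3]^{-1}$ in the definition of $P_0$ is, up to an overall sign, exactly $(B^{\ast}B)^{-1}$. The conclusion is then the textbook fact that $B(B^{\ast}B)^{-1}B^{\ast}$ is the orthogonal projection onto $\mathrm{Ran}(B)$.

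First I would pin down $W$, working under $|v(x)|\les\la x\ra^{-4-}$ (the decay already demanded by Lemmas~\ref{lem:S_3} and \ref{invertible}). If $\psi\in W$, then elliptic regularity for $\Delta^2$ — bootstrapping $\Delta^2\psi=-V\psi$ against the polynomial decay of $V$ — gives $\psi\in L^\infty$, so the converse of Lemma~\ref{lem:esa1} produces $\phi:=Uv\psi\in S_1 L^2$ with $\psi=c_0-G_0 v\phi$ and $c_0=\|V\|_1^{-1}\la v,T\phi\ra$. Since $\psi\in L^2$, Lemma~\ref{lem:psiexp2} forces $PT\phi=0$, $\int y\,(v\phi)(y)\,dy=0$ and $\int y_i y_j\,(v\phi)(y)\,dy=0$ for all $i,j$; the first two identities put $\phi\in S_2 L^2$ by Lemma~\ref{lem:S_2}, whence $c_0=\|V\|_1^{-1}\la v,T\phi\ra=0$, and all three put $\phi\in S_3 L^2$ by Lemma~\ref{lem:S_3}. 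Thus $\psi=-G_0 v\phi$ with $\phi\in S_3 L^2$, i.e.\ $\psi\in\mathrm{Ran}(B)$. Conversely, for $\phi\in S_3 L^2\subseteq S_2 L^2$, Lemma~\ref{lem:esa1} (with $c_0=0$) gives that $-G_0 v\phi$ solves $H\psi=0$ distributionally, and Lemma~\ref{lem:S_3} gives $-G_0 v\phi\in L^2$, so $-G_0 v\phi\in W$. Hence $W=\{-G_0 v\phi:\phi\in S_3 L^2\}=\mathrm{Ran}(B)$, the overall sign being immaterial for a linear subspace.

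Next I would collect the algebra of $B$. Because $S_3 L^2$ is finite-dimensional and $G_0 v e\in L^2$ for every $e\in S_3 L^2$ (the previous step), $B$ is a genuine finite-rank operator; its adjoint is $B^{\ast}=S_3 v G_0$, read as the finite-rank map $f\mapsto\sum_k\la f,G_0 v e_k\ra e_k$ for an orthonormal basis $\{e_k\}$ of $S_3 L^2$ (using that $G_0$ has a real symmetric kernel), so $B^{\ast}B=S_3 v G_0^2 v S_3$ on $S_3 L^2$. Remark~\ref{G5toG0inuse} evaluates this form: for $\phi\in S_3 L^2$, $\la B^{\ast}B\phi,\phi\ra=\|G_0 v\phi\|_{L^2}^2=-\la S_3 v G_4 v S_3\phi,\phi\ra$, so $S_3 v G_4 v S_3=-B^{\ast}B$ as self-adjoint operators on $S_3 L^2$. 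By Lemma~\ref{invertible} the left side is invertible on $S_3 L^2$; hence so is $B^{\ast}B$, in particular $B$ is injective, and $[S_3 v G_4 v S_3]^{-1}=-(B^{\ast}B)^{-1}$. Now the elementary identity: for injective bounded $B$ with $B^{\ast}B$ invertible, $\Pi:=B(B^{\ast}B)^{-1}B^{\ast}$ satisfies $\Pi^{\ast}=\Pi$, $\Pi^2=\Pi$, $\Pi B=B$, and $\Pi$ kills $\ker B^{\ast}=\mathrm{Ran}(B)^{\perp}$, so $\Pi$ is the orthogonal projection onto $\mathrm{Ran}(B)=W$. Feeding in the previous display gives $P_0=G_0 v S_3[S_3 v G_4 v S_3]^{-1}S_3 v G_0=\pm B(B^{\ast}B)^{-1}B^{\ast}=\pm\Pi$; the remaining task is to track constants and signs, using $G_0=(\Delta^2)^{-1}\ge 0$ and the normalizations \eqref{adef}--\eqref{Gdef}, to confirm that $P_0$ (and not $-P_0$) is the orthogonal projection onto $W$ — a naive tracking yields $-B(B^\ast B)^{-1}B^\ast$, so if these do not combine favorably the statement should carry the overall minus sign in the definition of $P_0$.

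I expect the real obstacle to be the first step — certifying that $\mathrm{Ran}(B)$ is precisely the $L^2$ eigenspace $W$ and nothing larger. That is where the $L^2\Rightarrow L^\infty$ elliptic bootstrap (not in the present excerpt) and the full chain of classification lemmas are all needed, and where the small but essential observation $c_0=0$ for $\phi\in S_2 L^2$ turns $\psi=c_0-G_0 v\phi$ into the clean $\psi=-G_0 v\phi$. Once the range is identified, the rest — $B^{\ast}B=-S_3 v G_4 v S_3$ from Remark~\ref{G5toG0inuse}, invertibility from Lemma~\ref{invertible}, and the one-line projection identity — is routine.
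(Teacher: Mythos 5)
Your structural reading of $P_0$ as $B(B^{*}B)^{-1}B^{*}$ with $B=G_0vS_3:S_3L^2\to L^2$ is essentially the paper's proof in different clothing: the paper fixes an orthonormal basis $\{\phi_k\}$ of $S_3L^2$, sets $\psi_k=-G_0v\phi_k$, and verifies $P_0\psi_k=\psi_k$ through the Gram-type matrix $A_{ij}$, which is exactly the check that $B(B^{*}B)^{-1}B^{*}$ is the identity on $\mathrm{Ran}(B)$ together with self-adjointness. Your identification of $\mathrm{Ran}(B)$ with the $L^2$ kernel of $H$ (both inclusions, the observation that $c_0=0$ once $\phi\in S_2L^2$, and the boundedness of $L^2$ eigenfunctions needed to run the converse lemma) is in fact spelled out more completely than in the paper, which leaves that identification to the classification lemmas.

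The genuine gap is the sign, which you explicitly leave unresolved and even suggest may force a minus sign into the statement; since a nonzero orthogonal projection never equals minus an orthogonal projection, ``$P_0=\pm\Pi$'' does not prove the lemma. The resolution is that Remark~\ref{G5toG0inuse}, which you took at face value, itself carries a sign error: in the expansion of $R(H_0;-\lambda^4)$ used in \eqref{G5toG0} the coefficient of $\lambda^4$ is $-G_4$, not $+G_4$ (i.e.\ $a_4=-1$): expanding the kernel $\frac{e^{e^{3i\pi/4}\lambda|x-y|}-e^{e^{5i\pi/4}\lambda|x-y|}}{8\pi i\lambda^2|x-y|}$, the sixth Taylor term is $+\frac{\lambda^4|x-y|^5}{4\pi\cdot 6!}=-\lambda^4G_4(x,y)$. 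Carrying $a_4=-1$ through \eqref{G5toG0}--\eqref{G5toG0lim} gives $\la G_4v\phi,v\phi\ra=+\|G_0v\phi\|_{L^2}^2$ for $\phi\in S_3L^2$ (one can also verify this directly from $G_4=-|x-y|^5/(4\pi\cdot 6!)$ and $\widehat{|x|^{5}}=c|\xi|^{-8}$ with $c<0$). Hence $S_3vG_4vS_3=+B^{*}B$ is positive definite, $[S_3vG_4vS_3]^{-1}=(B^{*}B)^{-1}$, and your own argument then closes with $P_0=B(B^{*}B)^{-1}B^{*}$, the orthogonal projection onto the zero-energy eigenspace, exactly as stated; no amendment of the lemma is needed. (The paper's displayed computation reaches the stated conclusion because the remark's sign slip is compensated by a second one in \eqref{s3vg0psik}, where $A_{jk}=-\la\psi_k,\psi_j\ra$ is used as $+\la\psi_k,\psi_j\ra$.) So your suspicion about the sign was well placed, but the correct fix is to the remark, not to the lemma, and as written your proof stops one essential step short of establishing the statement.
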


\begin{proof}
Let $ \{ \phi_k \}_{k=1}^N $ be the orthonormal basis of $S_3L^2$, then $S_3 f = \sum_{j=1}^N \phi_j \la f,  \phi_j \ra $. Moreover, for all $\phi_k$, one has $ \psi_k = - G_0 v \phi_k =-G_0V\psi_k$ are linearly independent for each $k$ and $\psi_k \in L^2$. We will show that $P_0 \psi_k  = G_0 v S_3 [ S_3 v G_4 v S_3]^{-1} S_3 v G_0 \psi_k = \psi_k$ for all $1\leq k \leq N$. This implies that $P_0$ is the identity on the range of $P_0$.
Since $P_0$ is self-adjoint, this finishes the proof.

Let $\{A_{ij}\}_{i,j=1}^N$ be the matrix that representation of $S_3 v G_4 v S_3$ with respect to  the orthonormal basis $ \{ \phi_k \}_{k=1}^N $, then by Remark~\ref{G5toG0inuse}
$$
A_{ij} =  \la S_3 v G_4 v \phi_j, \phi_i \ra = -\la  G_0 v \phi_j,  G_0 v \phi_i \ra =  -\la   \psi_j,    \psi_i \ra.
$$
Also note that, by the representation of $S_3$, we  have 
\be\label{s3vg0psik}
S_3 v G_0 \psi_k = \sum^N_{j=1} \phi_j \la v G_0 \psi_k,  \phi_j \ra = -\sum^N_{j=1} \phi_j \la \psi_k,  \psi_j \ra = - \sum^N_{j=1} \phi_j A_{j k}
\ee

By \eqref{s3vg0psik} we have 
\begin{multline*}
P_0 \psi_k =  -\sum^N_{j=1} G_0 v S_3 [ S_3 v G_4 v S_3]^{-1} \phi_j  A_{jk} \\  
 = -\sum^N_{i,j=1} G_0 v S_3 (A^{-1})_{ij}  \phi_i A_{jk} 
=  \sum^N_{i,j=1} \psi_i (A^{-1})_{i,j}  A_{jk}= \sum^N_{i =1} \psi_i \delta_{ik}=\psi_k.
\end{multline*}
\end{proof}
 
\begin{rmk}\label{rem:ort}
	
	One consequence of the preceeding results is that any zero-energy resonance function is of the form:
	$$
		\psi(x)= c_0+ c_1 \frac{x_1}{\la x \ra }+ c_2 \frac{x_2}{\la x \ra } +c_3 \frac{x_3}{\la x \ra } + \sum_{1\leq i \leq j\leq 3}  c_{ij} \frac{x_i x_j}{\la x \ra^{3}} + O_{L^2}(1).
	$$
	For some constants $c_0,c_1, c_2, c_3$, and $c_{ij}$, $1\leq i\leq j \leq 3$.
	Hence, the resonance space is at most 10 dimensional along with a finite-dimensional eigenspace. Moreover, $S_1-S_2 $ is at most four dimensional, $S_2-S_3$  is at most 6 dimensional, the rest is the eigenspace. 
\end{rmk}

\end{document}